\definecolor{navy}{RGB}{20,0,105}
\renewcommand{\namecref}{\lcnamecref}
\newcommand{\dif}{\mathop{}\!\mathrm{d}} 
       \newcommand{\dx}{\dif x}     \newcommand{\dPi}{\dif \Pi}     
 \newcommand{\pd}[2]{\frac{\partial #1}{\partial #2}}
   \newcommand{\NN}{\mathbb{N}}  \newcommand{\RR}{\mathbb{R}} \newcommand{\CC}{\mathbb{C}}  \newcommand{\II}{\mathbbm{1}} 
\newcommand{\Aa}{\mathcal{A}}  \newcommand{\Cc}{\mathcal{C}}  \newcommand{\Ee}{\mathcal{E}} \newcommand{\Ff}{\mathcal{F}}  \newcommand{\Hh}{\mathcal{H}}    \newcommand{\Ll}{\mathcal{L}}            \newcommand{\Xx}{\mathcal{X}}  
\DeclarePairedDelimiter{\norm}{\lVert}{\rVert}
\DeclarePairedDelimiter{\abs}{\lvert}{\rvert}
\DeclarePairedDelimiter{\braces}{ \{ }{ \} }
\DeclarePairedDelimiter{\brackets}{(}{)}
\DeclarePairedDelimiter{\sqbrackets}{[}{]}
\DeclarePairedDelimiter{\ip}{\langle}{\rangle}
\DeclarePairedDelimiter{\floor}{\lfloor}{\rfloor}
\DeclareMathOperator{\Var}{Var}
\DeclareMathOperator{\Cov}{Cov}
\DeclareMathOperator{\support}{supp}
\DeclareMathOperator{\Span}{span}
\newcommand{\eps}{\varepsilon}
\newcommand{\iidsim}{\overset{iid}{\sim}}
\newcommand{\WW}{\mathbb{W}}
\newcommand{\HH}{\mathbb{H}}
\newcommand{\loc}{\textnormal{loc}}
\DeclareMathOperator{\Grad}{\nabla}
\DeclareMathOperator{\Div}{\Grad \cdot}
\DeclareMathOperator{\Laplacian}{\mathop{{}\Delta}\nolimits}
\DeclareMathOperator{\tr}{tr}
\theoremstyle{definition}  \newtheorem*{definition*}{Definition}  \newtheorem*{definitions*}{Definitions}  \newtheorem{assumption}{Assumption}
\theoremstyle{remark}  \newtheorem*{remark*}{Remark}  \newtheorem*{remarks*}{Remarks} \newtheorem{remark}{Remark}
\theoremstyle{plain} \newtheorem{theorem}{Theorem} \newtheorem*{theorem*}{Theorem} \newtheorem{lemma}[theorem]{Lemma} \newtheorem{corollary}[theorem]{Corollary} \newtheorem*{lemma*}{Lemma} 
\crefname{appsec}{Appendix}{Appendices}
\crefname{assumption}{Assumption}{Assumptions}
\crefname{equation}{}{}
\crefname{enumi}{}{}
\newlist{lemenum}{enumerate}{1} 
\setlist[lemenum]{label=\alph*., ref=\arabic{theorem}\alph*}
\newlist{thmenum}{enumerate}{1} 
\setlist[thmenum]{label=\Alph*., ref=\arabic{theorem}\Alph*}
\title{On statistical Calder\'on problems}
\author{Kweku Abraham and Richard Nickl}
\date{\textsc{University of Cambridge}}
\begin{document}
	\maketitle
	
	\begin{abstract}
		For $D$ a bounded domain in $\mathbb R^d, d \ge 2,$ with smooth boundary $\partial D$, the non-linear inverse problem of recovering the unknown conductivity $\gamma$ determining solutions $u=u_{\gamma, f}$ of the partial differential equation 
		\begin{equation*}
		\begin{split}
		\Div(\gamma \Grad u)&=0 \quad \text{ in }D, \\
		u&=f \quad \text { on } \partial D,
		\end{split}
		\end{equation*}
		from noisy observations $Y$ of the Dirichlet-to-Neumann map \[f \mapsto \Lambda_\gamma(f) =  {\gamma  \pd{u_{\gamma,f}}{\nu}}\Big|_{\partial D},\] with $\partial/\partial \nu$ denoting the outward normal derivative, is considered. The data $Y$ consists of $\Lambda_\gamma$ corrupted by additive Gaussian noise at noise level $\eps>0$, and a statistical algorithm $\hat \gamma(Y)$ is constructed which is shown to recover $\gamma$ in supremum-norm loss at a statistical convergence rate of the order $\log(1/\eps)^{-\delta}$ as $\eps \to 0$. It is further shown that this convergence rate is optimal, up to the precise value of the exponent $\delta>0$, in an information theoretic sense. The estimator $\hat \gamma(Y)$ has a Bayesian interpretation in terms of the posterior mean of a suitable Gaussian process prior and can be computed by MCMC methods.
		
		\smallskip
		
		\noindent\textit{Keywords: non-linear inverse problems, elliptic partial differential equations, electrical impedance tomography, asymptotics of nonparametric Bayes procedures}
	\end{abstract}
	\tableofcontents
	
	\section{Introduction}\label{sec:TheCalderonProblem}
	Let $D \subset \mathbb R^d, d \ge 2,$ be a bounded domain, which we understand here to be a connected open set with smooth boundary $\partial D$. For $\gamma: D \to (0,\infty)$ a \textit{conductivity coefficient}, consider solutions $u$ to the \emph{Dirichlet problem} 
	\begin{equation}\label{eqn:DirichletProblem}
	\begin{split}
	\Div(\gamma \Grad u)&=0 \quad \text{ in }D, \\
	u&=f \quad \text { on } \partial D,
	\end{split}
	\end{equation}
	where $\Grad$ denotes the usual gradient operator and where $f: \partial D \to \CC$ prescribes some boundary values. The parameter spaces considered in the sequel are of the form  
	\begin{align} 
	\label{eqn:DefinitionOfTheta}  \Gamma_{m, D'}&=\braces[\Big]{\gamma\in C(D) : \inf_{x\in D} \gamma(x) \geq m,~\gamma=1 \text{ on } D\setminus D'},\\
	\label{eqn:DefinitionOfThetaMalpha}
	\Gamma^\alpha_{m,D'}(M)&=\braces{\gamma \in \Gamma_{m, D'}: \norm{\gamma}_{H^\alpha (D)}\leq M}, \quad M>0, 
	\end{align}
	where $m\in (0,1)$ is a fixed constant, $D'$ is a domain compactly supported in $D$ (that is, its closure $\bar{D'}$ is contained in $D$), and $\alpha \ge 0$
	measures the regularity of $\gamma$ in the Sobolev scale. The Sobolev spaces $H^\alpha(D), H^\alpha(\partial D)$ of complex-valued functions (and variations thereof) are defined in detail in \cref{sec:LaplaceBeltramiEigenfunctionsAndSobolevSpaces}; the standard $L^2(D), L^2(\partial D)$ Lebesgue spaces arise as the case $\alpha=0$, with inner products $\ip{\cdot, \cdot}_{L^2(D)}, \ip{\cdot, \cdot}_{L^2(\partial D)}$, respectively, and $C(D)$ denotes the space of bounded continuous \textit{real-valued} functions on $D$, equipped with the sup-norm $\norm{\cdot}_{\infty}$. Except where otherwise stated, all integrals are taken with respect to Lebesgue and surface measures on $D$ and $\partial D$ respectively.
	
	\smallskip
	
	The elliptic partial differential equation (PDE) in (\ref{eqn:DirichletProblem}) has, for $\gamma\in\Gamma_{m, D'}$ and $f\in H^{s+1}(\partial D)/\CC,$ $s \in \RR$, a unique weak solution $u_{\gamma, f}$ in  the space
	\[\Hh_s:=\brackets[\big]{H^{\min\braces{1,s+3/2}}(D)\cap H^1_\loc (D)}/\CC;\]  that is (for $\overline v$ denoting the complex conjugate of $v$), the equations 
	\begin{equation}\label{eqn:DirichletProblemWeakFormulation}
	\begin{split}\int_D \gamma \Grad u \cdot \Grad \overline v&=0 \quad \forall v\in H_0^1(D),\\
	u&=f \quad \text{ on } \partial D,
	\end{split}
	\end{equation} hold simultaneously (for $u\in \Hh_s$) if and only if $u=u_{\gamma,f}$, where the boundary values of $u$ are defined in a trace sense. Here and below $/\CC$ means that we identify functions $f, f+c$ which are equal up to a scalar $c \in \CC$. See \cref{lem:DirichletProblemHasUniqueSolution}  in \cref{sec:MappingPropertiesOfLambda_gamma} and its proof for details.
	
	Given a solution $u_{\gamma,f}$ to the Dirichlet problem, one can measure the \emph{Neumann (boundary) data}
	\[{\gamma\pd{u_{\gamma,f}}{\nu}}\Big|_{\partial D}\equiv {\pd{u_{\gamma,f}}{\nu}}\Big|_{\partial D},~~\gamma \in \Gamma_{m, D'},\] where $\pd{}{\nu}$ denotes the outward normal derivative on $\partial D$ (again to be understood in a trace sense). It can be shown (see \cref{lem:LambdaGammaTakesOneDerivative}) 
	that for any $s\in\RR$ and any $f\in H^{s+1}(\partial D)/\CC$, the Neumann data lies in the space
	\begin{equation} \label{eqn:DefintionOfHsDiamond}
	H^s_\diamond(\partial D):= \braces{g \in H^{s}(\partial D) : \ip{g,1}_{L^2(\partial D)}=0}.
	\end{equation}Thus, we may define the so-called \emph{Dirichlet-to-Neumann map}, 
	\begin{equation} \label{forward}
	\begin{split}
	\Lambda_\gamma : \quad  H^{s+1}(\partial D)/\CC &\to H^{s}_\diamond (\partial D), \\
	f &\mapsto {\gamma  \pd{u_{\gamma,f}}{\nu}}\Big|_{\partial D},
	\end{split}
	\end{equation}
	which associates to each prescribed boundary value $f$ the Neumann data of the solution of the PDE \cref{eqn:DirichletProblem}. The choice to quotient the domain of $\Lambda_\gamma$ by $\CC$ is natural as the Neumann data is invariant with respect to addition of scalars.

	The \emph{Calder{\'o}n problem} \cite{Calderon1980} is a well studied inverse problem that addresses the task of recovering interior conductivities $\gamma$ from knowledge of the boundary data $\Lambda_\gamma$. Note that while $\Lambda_\gamma$ itself is a linear operator between Hilbert spaces, the `forward map' $\gamma \to \Lambda_\gamma$ is non-linear. Landmark injectivity results by Sylvester and Uhlmann ($d \ge 3$) and by Nachman ($d=2$) show, however, that recovery is in principle possible. 
	
	\begin{theorem*}[Sylvester \& Uhlmann, \cite{Sylvester1987}; Nachman \cite{N96}] 
		If $\Lambda_{\gamma_1} = \Lambda_{\gamma_2}$ then $\gamma_1=\gamma_2$.
	\end{theorem*}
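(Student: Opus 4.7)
The plan is to follow the classical Sylvester--Uhlmann strategy for $d\geq 3$ (the $d=2$ case of Nachman will require a separate two-dimensional $\bar\partial$-argument, which I will only gesture at). The first step is to reduce the conductivity equation to a Schr\"odinger equation: writing $v=\gamma^{1/2}u$, a direct computation shows that $u$ solves $\Div(\gamma\Grad u)=0$ in $D$ if and only if $v$ solves $(-\Laplacian + q_\gamma) v=0$ in $D$, where $q_\gamma = \Laplacian (\gamma^{1/2})/\gamma^{1/2}$. Accordingly I would define the Dirichlet-to-Neumann map $\Lambda_{q}$ for the Schr\"odinger operator $-\Laplacian+q$ (on boundary data for which $0$ is not a Dirichlet eigenvalue) and check the elementary identity
\begin{equation*}
\Lambda_{q_\gamma} f = \gamma^{-1/2}\sqbrackets[\big]{\Lambda_\gamma(\gamma^{-1/2}f) + \tfrac{1}{2}\gamma^{-1}\tfrac{\partial \gamma}{\partial \nu} f}\big|_{\partial D}.
\end{equation*}
Since $\gamma_1=\gamma_2=1$ on a neighbourhood of $\partial D$ (both lie in $\Gamma_{m,D'}$ with $\bar{D'}\subset D$), the boundary contribution vanishes, so $\Lambda_{\gamma_1}=\Lambda_{\gamma_2}$ forces $\Lambda_{q_{\gamma_1}}=\Lambda_{q_{\gamma_2}}$. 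Thus it suffices to show injectivity at the Schr\"odinger level: $\Lambda_{q_1}=\Lambda_{q_2}\Rightarrow q_1=q_2$, which by the formula $q_\gamma=\Laplacian\gamma^{1/2}/\gamma^{1/2}$ together with the boundary values $\gamma_i=1$ near $\partial D$ recovers $\gamma_1=\gamma_2$ by unique continuation (or by solving an ODE along normals from the boundary).

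The next step is the Alessandrini-type integral identity: for $v_i\in H^1(D)$ solving $(-\Laplacian+q_i)v_i=0$ with traces $f_i=v_i|_{\partial D}$,
\begin{equation*}
\int_D (q_1-q_2)\, v_1 v_2 \, dx \;=\; \ip*{(\Lambda_{q_1}-\Lambda_{q_2})f_1,\, f_2}_{L^2(\partial D)},
\end{equation*}
which follows from two applications of Green's identity and the symmetry of $\Lambda_{q_i}$. Under the hypothesis $\Lambda_{q_1}=\Lambda_{q_2}$ the right-hand side vanishes, so I need a supply of solutions $v_1,v_2$ whose products span a dense subspace of $L^1(D)$; this is where complex geometric optics (CGO) enters.

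For any $\xi\in\RR^d$ (with $d\geq 3$), I would pick $\zeta_1,\zeta_2\in\CC^d$ with $\zeta_j\cdot\zeta_j=0$, $\zeta_1+\zeta_2=-i\xi$, and $|\zeta_j|\to\infty$; this is possible exactly because $d\geq 3$ gives enough room in $\CC^d$ to choose such pairs. Extending $q_i$ by zero to a ball $B\supset D$ (legitimate since $q_i$ vanishes near $\partial D$ by the above), Sylvester--Uhlmann's construction, based on inverting $-\Laplacian-2\zeta\cdot\Grad$ via a weighted $L^2$ estimate for the symbol $|\zeta|^2+2i\zeta\cdot\xi$, produces solutions of the form
\begin{equation*}
v_j(x) \;=\; e^{\zeta_j\cdot x}\brackets[\big]{1+\psi_j(x,\zeta_j)}, \qquad \norm{\psi_j(\cdot,\zeta_j)}_{L^2(B)}=O(|\zeta_j|^{-1}).
\end{equation*}
Substituting into the Alessandrini identity and letting $|\zeta_j|\to\infty$, the product $v_1v_2$ converges (weakly, against the $L^2$ function $q_1-q_2$) to $e^{-i\xi\cdot x}$, yielding $\widehat{q_1-q_2}(\xi)=0$ for every $\xi\in\RR^d$. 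Hence $q_1=q_2$ and the previous paragraph concludes $\gamma_1=\gamma_2$.

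The main obstacle is the CGO existence/estimate step: one must solve $(-\Laplacian-2\zeta\cdot\Grad+q)\psi=-q$ in weighted $L^2$ spaces on $B$ uniformly in $\zeta$, which requires the Sylvester--Uhlmann resolvent estimate for the Fourier multiplier $|\xi|^2-2i\zeta\cdot\xi$. The other delicate point is the dimensional restriction: in $d=2$ one cannot let $|\zeta|\to\infty$ along isotropic directions with $\zeta_1+\zeta_2$ staying bounded in $\RR^d$, and one has to invoke Nachman's $\bar\partial$-based argument using CGO solutions parametrised by $k\in\CC$ and a Beals--Coifman scattering transform; I would cite \cite{N96} for this case rather than reproduce the construction.
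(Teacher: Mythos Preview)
The paper does not give its own proof of this theorem: it is stated with attribution to \cite{Sylvester1987} and \cite{N96} and then used as a black box to motivate the statistical recovery results that follow. There is therefore nothing in the paper to compare your argument against.

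That said, your outline is a faithful sketch of the classical Sylvester--Uhlmann proof for $d\geq 3$: the substitution $v=\gamma^{1/2}u$ reducing to a Schr\"odinger problem, the boundary identity linking $\Lambda_\gamma$ and $\Lambda_{q_\gamma}$ (which simplifies here because $\gamma_i=1$ near $\partial D$), the Alessandrini integral identity, and the CGO construction with $\zeta_j\cdot\zeta_j=0$, $\zeta_1+\zeta_2=-i\xi$, $|\zeta_j|\to\infty$ are all standard and correctly assembled. One small point: the final step recovering $\gamma_i$ from $q$ is cleaner than you suggest. Once $q_1=q_2=:q$, both $\gamma_1^{1/2}$ and $\gamma_2^{1/2}$ solve $(-\Laplacian+q)w=0$ in $D$ with $w=1$ on $\partial D$; the Dirichlet problem for $-\Laplacian+q$ has at most one solution because any $\phi\in H^1_0(D)$ with $(-\Laplacian+q)\phi=0$ yields $u=\gamma^{-1/2}\phi$ solving $\Div(\gamma\Grad u)=0$ with $u|_{\partial D}=0$, hence $u=0$. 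Your ``ODE along normals'' alternative is not quite right, but the uniqueness argument just given suffices. Your deferral to \cite{N96} for $d=2$ is appropriate.
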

	
	Nachman \cite{Nachman1988} and Novikov \cite{N88} studied elaborate inversion algorithms that allow recovery of $\gamma$ if exact knowledge of the entire operator $\Lambda_\gamma$ is available. Moreover Alessandrini \cite{Alessandrini1988} (and later Novikov and Santacesaria \cite{NS10} for $d=2$) gave `stability estimates' providing quantitative continuity bounds for the inverse map, and Mandache \cite{Mandache2001} gave an `instability estimate' showing that these bounds are nearly sharp. 
		
	The Calder\'on problem has since been vigorously studied and an excellent survey can be found in Uhlmann \cite{Uhlmann2009} and also in the lecture notes by Salo \cite{S08}. Its importance partly stems from its applications to electrical impedance tomography (EIT) -- described in more detail in the next section -- where \emph{discrete} boundary measurements of the operator $\Lambda_\gamma$ are performed to infer the interior conductivity $\gamma$. Any such data comes with error, and arguably the most natural mathematical description of such approximate measurements is by a \textit{statistical} noise model. As the superposition of many independent errors is well described by a normal distribution (via the central limit theorem), it is further natural to postulate that this noise follows a Gaussian law. In algorithmic practice this has already been widely acknowledged in the general setting of inverse problems, where statistical, and in particular Bayesian, inversion approaches have flourished in the last decade since the influential work of Stuart \cite{Stuart2010}. In the context of EIT we refer to the articles \cite{KKSV00, BM12, Kolehmainen2013, Roininen2014, Gehre2014, Dunlop2016} and the many references therein. Currently, little theory giving statistical guarantees for the performance of such Bayesian de-noising methodology is available, particularly for non-linear problems. Some recent progress has been made in non-linear settings (see \cite{V13, NS17, Nickl2017b, NS17b, Nickl2018, Monard2019, GN19}) but no results are available at present for the Calder\'on problem described above, and the purpose of the present paper is to at least partially fill this gap.  
	
	We will introduce three natural noise models for such statistical Cald\'eron problems, all asymptotically closely related, in the next section. We prove our main theorems initially in one of these models, and show in \cref{sec:lecamdef} that the results are in fact valid in the other models too. The preferred model for the theoretical development is \cref{eqn:Model:HilbertSpaceMeasurements}, wherein one observes $\Lambda_\gamma$ corrupted by a Gaussian white noise in an appropriate space of Hilbert--Schmidt operators. The noise is described by the scalar quantity $\eps>0$ governing its magnitude and a parameter $r\in\RR$ determining its `spectral heteroscedasticity'. If we denote by $P^\gamma_\eps=P^\gamma_{\eps,r}$ the resulting probability law of the noisy observations $Y$ of $\Lambda_\gamma$, then our main results can be summarised in the following two theorems.

	\begin{theorem}\label{thm:ExistenceOfEstimator}
		Let $\alpha > 3+d$ be an integer, let $m_0 \in (0,1)$, $M > 1$ be given, and let $D_0$ be a domain in $\mathbb R^d$ such that $\bar {D}_0$ is contained in $D$. 
		
		There exists a measurable function $\hat{\gamma}=\hat \gamma_\eps(Y)$ of the observations $Y \sim P^\gamma_\eps$ such that 
		\[\sup_{\gamma\in \Gamma^\alpha_{ m_0, D_0}(M)} P^\gamma_\eps(\norm{\hat{\gamma}-\gamma}_{\infty}> C\log(1/\eps)^{-\delta})\to 0,~\text{as } \eps\to 0,\]
		where $\delta>0$ depends only on $d$ and $\alpha$, and $C$ 
		depends only on $\alpha,$ $M$, $m_0$, $D$, $D_0$ and $r$. 
	\end{theorem}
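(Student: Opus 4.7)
The strategy is a two-stage Bayesian inversion: construct a Gaussian process prior $\Pi_\eps$ on the conductivity, use the abstract posterior contraction machinery for Gaussian-shift non-linear inverse problems to obtain a \emph{polynomial} rate $\eps^\kappa$ in the forward (Hilbert--Schmidt) norm on $\Lambda_\gamma$, and then convert this into a logarithmic sup-norm rate on $\gamma$ via Alessandrini's stability estimate for the Calder\'on problem. Since $P^\gamma_\eps$ is a Gaussian shift by $\Lambda_\gamma$ of magnitude $\eps$, log-likelihood ratios are linear-quadratic and the Kullback--Leibler divergence between $P^{\gamma_1}_\eps$ and $P^{\gamma_2}_\eps$ equals $\norm{\Lambda_{\gamma_1}-\Lambda_{\gamma_2}}^2/(2\eps^2)$ in the relevant Hilbert--Schmidt norm, which serves as the bridge between the statistics and the PDE.

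The prior $\Pi_\eps$ is built from a centred Gaussian field $G$ on $D$ whose reproducing-kernel Hilbert space embeds continuously into $H^\alpha(D)$ and whose samples are supported in $\bar{D_0}$; one then sets $\gamma = 1 + \psi(G)$ for a smooth bounded link function $\psi$ engineered so that $\gamma \ge m_0$ throughout $D$, $\gamma \equiv 1$ on $D \setminus D_0$, and $\norm{\gamma}_{H^\alpha(D)} \le M$ holds with high prior probability (possibly after rescaling or conditioning). Standard Gaussian small-ball and Borell--TIS inequalities then yield the two ingredients needed by the general posterior contraction theorem for non-linear inverse problems: (i) a lower bound on the prior mass of $L^2$-neighbourhoods of any fixed $\gamma_0 \in \Gamma^\alpha_{m_0, D_0}(M)$, and (ii) a polynomial-entropy sieve $\Ss_\eps$ carrying most of the prior mass. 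Combined with the forward Lipschitz bound $\norm{\Lambda_{\gamma_1}-\Lambda_{\gamma_2}} \lesssim \norm{\gamma_1-\gamma_2}_{L^2(D)}$ (from the mapping properties of $\Lambda_\gamma$ already referenced in the paper), and exponentially consistent tests constructed using this forward continuity, the standard contraction machinery yields, with $P^\gamma_\eps$-probability tending to one uniformly over $\Gamma^\alpha_{m_0, D_0}(M)$, that the posterior concentrates in a forward ball of radius $\eps^\kappa$ around $\gamma$ for some $\kappa=\kappa(d,\alpha)>0$.

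Finally, I would define $\hat\gamma$ as the posterior mean projected onto the convex set $\Gamma^\alpha_{m_0, D_0}(M)$. By Jensen's inequality this projection inherits the $H^\alpha$-bound, and a Markov argument transfers the posterior contraction in the forward norm to a statement about $\norm{\Lambda_{\hat\gamma}-\Lambda_\gamma}$. Alessandrini's logarithmic stability estimate, valid on $H^\alpha$-bounded conductivities in the quantitative form
\[
\norm{\gamma_1-\gamma_2}_{\infty} \le C \abs{\log \norm{\Lambda_{\gamma_1}-\Lambda_{\gamma_2}}}^{-\delta'},
\]
then converts the polynomial forward rate into the claimed $\log(1/\eps)^{-\delta}$ bound on $\norm{\hat\gamma - \gamma}_\infty$. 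The main obstacle will be the verification of the testing/sieve hypotheses of the posterior contraction theorem for this non-linear forward operator: constructing exponentially powerful tests compatible with the forward Lipschitz modulus while simultaneously respecting the $H^\alpha$-side information required by Alessandrini's bound is the delicate part, and the final exponent $\delta$ is fixed by the interplay between $\kappa$ (coming from the polynomial forward rate) and the Alessandrini exponent $\delta'$.
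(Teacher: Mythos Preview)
Your overall architecture---Gaussian process prior, posterior contraction in the forward Hilbert--Schmidt norm at a polynomial rate, then Alessandrini's stability to convert to a logarithmic sup-norm rate---matches the paper's. The ingredients you list (small-ball bounds, exponentially consistent tests, forward Lipschitz continuity) are the right ones. But there is a genuine gap in the order of operations at the end, and a missing device in the prior construction.

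\textbf{The gap.} You propose to take the posterior mean, project it onto $\Gamma^\alpha_{m_0,D_0}(M)$ to obtain $\hat\gamma$, and then apply Alessandrini to the pair $(\hat\gamma,\gamma_0)$. For this you need $\norm{\Lambda_{\hat\gamma}-\Lambda_{\gamma_0}}$ to be small, and you claim that ``a Markov argument transfers the posterior contraction in the forward norm'' to this. It does not: the map $\gamma\mapsto\Lambda_\gamma$ is nonlinear, so the posterior mean of $\Lambda_\gamma$ is not $\Lambda_{\hat\gamma}$, and Jensen's inequality (or any Markov argument) applied to the forward-norm posterior contraction gives you control of $E^\Pi[\Lambda_\gamma\mid Y]$, not of $\Lambda_{E^\Pi[\gamma\mid Y]}$. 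The paper avoids this by applying Alessandrini \emph{before} taking the mean: first show that the posterior concentrates on conductivities $\gamma$ with $\norm{\Lambda_\gamma-\Lambda_{\gamma_0}}_{\HH_r}$ small \emph{and simultaneously} with $\norm{\gamma}_{H^\beta}$ bounded; then Alessandrini applied to each posterior draw gives sup-norm posterior contraction $\Pi(\norm{\gamma-\gamma_0}_\infty>C\xi_{\eps,\delta}\mid Y)\to 0$; finally a uniform-integrability argument transfers this to the posterior mean $\hat\theta=E^\Pi[\theta\mid Y]$ directly in sup-norm. No projection onto $\Gamma^\alpha_{m_0,D_0}(M)$ is needed (and indeed the paper's estimator does not require knowledge of $M$).

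\textbf{The missing device.} For the step just described to work, you need the posterior to place most of its mass on an $H^\beta$-bounded set, since Alessandrini requires this of \emph{both} conductivities. Your phrase ``with high prior probability (possibly after rescaling or conditioning)'' is too vague: an unscaled Gaussian prior with RKHS $H^\alpha(D)$ has $H^\beta$-norm of order one under the prior, but the small-ball condition forces the relevant posterior sieve to have radius growing like $\eta_\eps/\eps\to\infty$, and one cannot simultaneously verify the small-ball bound and the $H^\beta$-regularity without further input. The paper's fix is the explicit rescaling $\theta=\eps^{d/(\alpha+d)}\zeta\theta'$ with $\theta'\sim\Pi'$: the factor $\eps^{d/(\alpha+d)}$ shrinks the prior enough that Fernique's theorem gives $\Pi(\norm{\gamma}_{H^\beta}>M')\le e^{-c(\eta_\eps/\eps)^2}$, while the small-ball probability $\Pi(B_{KL}^\eps(\eta_\eps))\ge e^{-\omega(\eta_\eps/\eps)^2}$ still holds because the truth $\theta_0$ has bounded RKHS norm. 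This balance is the crux of the argument, and your plan does not identify it.
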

	
	The estimator $\hat \gamma$ in the previous theorem has a natural Bayesian interpretation in terms of the posterior mean of a suitable Gaussian process based prior for $\gamma$. Such priors are most effective for recovery of sufficiently smooth $\gamma$, and the precise bound on $\alpha$ is chosen here for convenience (see \cref{rem:RougherPriors} for further discussion). The derivation and implementation of $\hat \gamma$ are described in \cref{sec:BayesianInterpretation}, where we give the more concrete \cref{thm:ContractionForTheta}, which implies \cref{thm:ExistenceOfEstimator}. We note that $\hat \gamma$ can be calculated without knowledge of the bound $M$ for $\norm{\gamma}_{H^\alpha(D)}$.
	
	The slow (logarithmic) convergence rate is not surprising in view of the folklore that the Calder\'on problem is a severely ill-posed inverse problem (cf.~also \cite{Mandache2001}). The following result makes this folklore information-theoretically precise -- it shows that the convergence rate obtained by the estimator $\hat{\gamma}$ is optimal in the statistical minimax sense, at least up to the precise value of the exponent $\delta$, for the prototypical case where $D_0, D$ are nested balls in $\mathbb R^d$. We denote by $\norm{\cdot}$ the standard Euclidean norm on $\mathbb R^d$. 
	
	\begin{theorem}
		\label{thm:MinimaxLowerBound}
		Let $D_0=\braces{x\in\RR^d : \norm{x}<1/2} \subset D=\braces{x\in\RR^d : \norm{x}<1}$, let $\alpha$ be an integer greater than $2$, and let $m_0 \in(0,1)$ be arbitrary. 
		
		For any $\delta'>\alpha(2d-1)/d$ and all $M$ large enough there exists $c=c(\delta', \alpha, d,m_0,r,M)$ such that
		\[ \inf_{\tilde{\gamma}} \sup_{\gamma \in \Gamma^\alpha_{ m_0, D_0}(M)} P^\gamma_\eps(\norm{\tilde{\gamma}-\gamma}_{\infty}> c\log(1/\eps)^{-\delta'})>1/4\] for all $\eps$ small enough, where the infimum extends over all measurable functions $\tilde{\gamma}=\tilde \gamma(Y)$ of the data $Y \sim P^\gamma_\eps$. 
		
	\end{theorem}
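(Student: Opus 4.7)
The plan is to apply the Fano information-theoretic lower bound combined with a Mandache-type instability estimate for the Dirichlet-to-Neumann map. The structure is standard: construct a large finite family of conductivities in $\Gamma^\alpha_{m_0, D_0}(M)$ which are pairwise well-separated in sup-norm, but whose DtN maps are nearly indistinguishable given noise of size $\eps$; Fano's inequality then precludes any estimator from reliably identifying the true conductivity, yielding the claimed minimax lower bound.

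Concretely, I would fix a smooth bump $\psi\in C^\infty_c(\RR^d)$ supported in the unit ball with $\psi(0)=1$, and distribute disjoint translates $\psi_j(x)=\psi((x-x_j)/\tau)$ on a grid of spacing of order $\tau$ inside $D_0$, yielding $N_\tau\asymp \tau^{-d}$ bumps. For a sign vector $\sigma\in\{\pm 1\}^{N_\tau}$, set
$$\gamma_\sigma = 1 + \rho \sum_j \sigma_j \psi_j.$$
Disjoint support gives $\|\gamma_\sigma - 1\|_\infty = \rho$, so $\gamma_\sigma\ge m_0$ whenever $\rho\le 1-m_0$, and a standard calculation yields $\|\gamma_\sigma\|_{H^\alpha(D)}\le C\rho\tau^{-\alpha}$. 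Choosing $\tau \asymp (\rho/M)^{1/\alpha}$ therefore keeps $\gamma_\sigma$ inside $\Gamma^\alpha_{m_0, D_0}(M)$. Applying the Varshamov-Gilbert bound extracts a set $\Sigma\subset\{\pm 1\}^{N_\tau}$ of cardinality $N\ge 2^{N_\tau/8}$ such that any two distinct $\sigma,\sigma'\in \Sigma$ differ in at least $N_\tau/8$ coordinates; evaluating at a differing bump centre then gives $\|\gamma_\sigma - \gamma_{\sigma'}\|_\infty \ge 2\rho$.

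The essential analytical ingredient is a Mandache-type estimate of the form
$$\|\Lambda_{\gamma_\sigma} - \Lambda_{\gamma_{\sigma'}}\|^2 \le C\exp(-c\tau^{-\beta})$$
in the Hilbert-Schmidt norm of the noise model (carrying the appropriate weighting by the spectral parameter $r$), for some $\beta\in(0,1]$ depending on $d$. This follows by expanding $\Lambda_{\gamma_\sigma}-\Lambda_{\gamma_{\sigma'}}$ in a Laplace-Beltrami eigenbasis of $\partial D$ and exploiting the exponential decay of matrix entries in the boundary frequencies whenever $\gamma-1$ is supported strictly inside $D$, an effect coming from interior elliptic regularity and Caccioppoli-type estimates for the associated harmonic extensions. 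Given this, the white-noise KL divergence is
$$\KL(P^{\gamma_\sigma}_\eps, P^{\gamma_{\sigma'}}_\eps) = \tfrac{1}{2\eps^2}\|\Lambda_{\gamma_\sigma}-\Lambda_{\gamma_{\sigma'}}\|^2 \le \tfrac{C}{2\eps^2}\exp(-c\tau^{-\beta}).$$
A standard form of Fano's inequality (Tsybakov, Corollary 2.6) then yields a minimax lower bound of order $\rho$ with probability $>1/4$, provided the pairwise KL divergences are at most $\tfrac14\log N \asymp \tau^{-d}$. Imposing $\eps^{-2}\exp(-c\tau^{-\beta})\lesssim \tau^{-d}$ forces $\tau^{-\beta}\asymp \log(1/\eps)$, whence $\rho\asymp \tau^{\alpha}\asymp \log(1/\eps)^{-\alpha/\beta}$. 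With the Mandache exponent $\beta = d/(2d-1)$ in dimension $d$ this matches the rate $\log(1/\eps)^{-\alpha(2d-1)/d}$ of the statement; the strict inequality $\delta'>\alpha(2d-1)/d$ absorbs any logarithmic slack lost in balancing the parameters.

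The main obstacle is the instability estimate itself. Mandache's original argument is stated in $C^k$ and in operator norm, and must be adapted here to the Sobolev class $H^\alpha$ and to the Hilbert-Schmidt norm with the $r$-weighting inherent in our noise model; identifying the correct exponent $\beta=d/(2d-1)$ and tracking constants through the eigenfunction expansion and the Caccioppoli bounds for harmonic extensions of high-frequency boundary data is the principal technical challenge. All subsequent steps are routine applications of the Varshamov-Gilbert bound and Fano's inequality, together with the standard computation of KL divergences under Gaussian white noise.
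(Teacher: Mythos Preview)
Your approach has a genuine gap at the instability step. The estimate you posit,
\[
\norm{\Lambda_{\gamma_\sigma}-\Lambda_{\gamma_{\sigma'}}}^2 \le C\exp(-c\tau^{-\beta}),
\]
does \emph{not} hold for generic localized bump perturbations. If $\sigma$ and $\sigma'$ differ in a single coordinate, then $\gamma_\sigma-\gamma_{\sigma'}=2\rho\sigma_j\psi_j$ is a fixed smooth bump of amplitude $\rho$, and forward continuity (\cref{lem:ForwardStability}) together with a linearisation argument shows $\norm{\Lambda_{\gamma_\sigma}-\Lambda_{\gamma_{\sigma'}}}$ is of order $\rho$, not exponentially small in $\tau^{-\beta}$. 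Running Fano with this correct magnitude forces $\eps^{-2}\rho^2\lesssim \tau^{-d}$, which yields only a polynomial lower bound $\rho\asymp\eps^{\alpha/(\alpha+d/2)}$ --- useless against a logarithmic upper bound. Mandache's exponential instability does not come from interior regularity or Caccioppoli estimates applied to arbitrary compactly supported perturbations; it comes from a delicate construction of oscillatory perturbations aligned with high-order spherical harmonics so as to be nearly invisible to all boundary measurements. Your bump family has no such structure.

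The paper avoids this entirely by using a \emph{two-point} lower bound rather than Fano. Mandache's Corollary~1 is already a two-point existence statement: for each $\xi\in(0,1)$ there exist $\gamma_0,\gamma_1$ supported in $D_0$, bounded in $C^\alpha$, with $\norm{\gamma_0-\gamma_1}_\infty\ge\xi$ and $\norm{\Lambda_{\gamma_0}-\Lambda_{\gamma_1}}_{H^{-q}\to H^q}\le\exp(-\xi^{-d/((2d-1)\alpha)})$. One cites this directly, converts the operator-norm bound to an $\HH_r$-bound via \cref{lem:ProjectionsApproximateHSOperatorsWell}, computes the KL divergence using \cref{lem:KLdistanceIsHHdistance}, and applies a standard Le~Cam two-point reduction with Pinsker's inequality. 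Choosing $\xi=\log(1/\eps)^{-\delta'}$ with $\delta'>\alpha(2d-1)/d$ makes the KL divergence $o(1)$, and the argument concludes. No Varshamov--Gilbert, no re-derivation of instability, and --- crucially --- no need for the instability to hold across an entire bump family.
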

	
	The particular value $1/4$ in the lower bound is chosen for convenience. Determining the exact exponent $\delta$ in the minimax convergence rate is a delicate PDE question related directly to the stability estimates of \cite{Alessandrini1988, NS10}, and beyond the scope of the present paper. For $d \ge 3$, one could use an explicit bound of Novikov \cite{N11} to show that a choice of $\delta$ satisfying $\delta/\alpha\to 1$ as $\alpha\to\infty$ is permitted in \cref{thm:ExistenceOfEstimator}. This scales proportionally (in the regularity index $\alpha$) to the exponent $\delta' = \alpha(2d-1)/d$ from the lower bound \cref{thm:MinimaxLowerBound}.

	\medskip
	
	This paper is structured as follows. In  \cref{sec:NoiseModel} we introduce the measurement model we consider in our theorems, and discuss its relationship to physical measurement models arising in medical imaging practice. In  \cref{sec:BayesianInterpretation} we give the construction of the Bayesian algorithm $\hat \gamma$ that solves our noisy version of the Calder\'on problem. \Cref{sec:FinalRemarks} contains some concluding remarks concerning the main theorems. All proofs and related background material are relegated to later sections. For convenience, the notation used is informally gathered in \cref{sec:Notation}.

	\section{Noise model and electrical impedance tomography}\label{sec:NoiseModel}
	We now introduce three scenarios for noisy observations of the operator $\Lambda_\gamma$ from (\ref{forward}), and discuss their relationship at the end of this subsection. Define \[\tilde{\Lambda}_\gamma= \Lambda_\gamma-\Lambda_1\] where the fixed (deterministic and known) operator $\Lambda_1$ is the Dirichlet-to-Neumann map for the standard Laplace equation, that is, eq.~\cref{eqn:DirichletProblem} with $\gamma=1$ identically on $D$. We then equivalently consider measuring a noisy version of $\tilde{\Lambda}_\gamma$. 
		
	Real-world data involving the Calder{\'o}n problem arises for example in medical imaging, namely in \emph{electrical impedance tomography}, see \cite{KKSV00, BM12, Kolehmainen2013, Roininen2014, Gehre2014, Dunlop2016}  and references therein. Electrodes are attached to a patient (or some other physical medium), and are used both to apply voltages and to record the resulting currents. If we assume the applied voltages are uniform across the surface of any given electrode, and the electrodes measure the average current across their surface, we are led to the observation model 	
	\begin{equation} \label{eqn:Model:HaarMeasurements} Y_{p,q}= \ip{\tilde{\Lambda}_\gamma [\psi_p],\psi_q}_{L^2(\partial D)} +\eps g_{p,q}, \quad p,q\leq P, \quad g_{p,q}\iidsim N(0,1),~ \eps>0,\end{equation} where the $\psi_p$ are, up to scaling factors, indicator functions $\II_{I_p}$ of some disjoint measurable subsets $(I_p)_{p\leq P}$ of $\partial D$ representing the locations of the electrodes. Throughout $N(0,1)$ denotes the standard normal distribution. In principle the noise level $\eps>0$ could vary with $p$ and $q$, but choosing scaling factors $c_p$ so that the $\psi_p=c_p \II_{I_p}$ are $L^2(\partial D)$-orthonormal we expect to be able to realise the above homoscedastic noise model. Also note that while the inner product $\ip{\cdot,\cdot}_{L^2(\partial D)}$ is defined with respect to complex scalars, $\tilde{\Lambda}_\gamma[\psi_p]$ takes real values since $\psi_p$ does (see before \cref{lem:tildeLambdaIsHilbertSchmidt}), and hence  it is natural to consider real-valued noise $g_{p,q}$ and data $Y_{p,q}$.
	
	\smallskip
	
	An alternative noise model considers spectral measurements. Denote by $(\phi_k=\phi_k^{(0)}:k\in\NN\cup \braces{0})$ an orthonormal basis of $L^2(\partial D)$ consisting of real-valued eigenfunctions of the Laplace--Beltrami operator on the compact manifold $\partial D$, described in more detail in \cref{sec:LaplaceBeltramiEigenfunctionsAndSobolevSpaces}. [If $D$ is a disc in $\mathbb R^2$ these comprise the usual trigonometric basis, while for $D$ a ball in $\mathbb R^3$, they are the spherical harmonics.] By discarding the constant function $\phi_0$ we obtain a basis of the spaces $L^2(\partial D)/\CC$ and $L^2_\diamond(\partial D)=H^0_\diamond(\partial D)$. Moreover, appropriate rescaling of these basis functions also provides orthonormal bases $(\phi_k^{(r)}:{k\in\NN})$ of all $H^r(\partial D)/\CC$ and $H^r_\diamond(\partial D)$ spaces, $r\in\RR$. 
	For some $r\in\RR$, we then consider the noisy matrix measurement model 
	\begin{equation} \label{eqn:Model:LaplaceBeltramiMeasurements} Y_{j,k}= \ip{\tilde{\Lambda}_\gamma [\phi_j^{(r)}],\phi_k^{(0)}}_{L^2(\partial D)} +\eps g_{j,k}, \quad j\leq J,k\leq K, \quad g_{j,k}\iidsim N(0,1), ~\eps>0,\end{equation}
where again it is natural to consider real-valued noise $g_{j,k}$ only. The parameter $r$ can in principle be chosen by the experimenter and reflects how the signal-to-noise ratio varies with frequency: as $r$ increases, the signal at high frequencies (i.e.\ at larger values of $j$) decreases compared to the signal at low frequencies. Likely the most realistic choices are $r=0$ (which will allow for comparison of the models \cref{eqn:Model:HaarMeasurements,eqn:Model:LaplaceBeltramiMeasurements}), and $r=1$, in which case the signal-to-noise ratio is the same across all frequencies: since $\Lambda_\gamma$ maps $H^{1}(\partial D)/\CC$ to $L^2(\partial D)$ isomorphically (\cref{lem:LambdaGammaTakesOneDerivative}), the signal magnitude $\norm{\Lambda_\gamma [\phi_j^{(1)}]}_{L^2(\partial D)}$ is of order $1$ for all $j$. A similar reasoning ($\norm{\phi_k^{(0)}}_{L^2(\partial D)}=1$ for all $k$) underpins the choice of the $L^2(\partial D)$-inner product in \cref{eqn:Model:LaplaceBeltramiMeasurements}.

If we formally take the limit $J,K\to \infty$ in \cref{eqn:Model:LaplaceBeltramiMeasurements}, we obtain a model of Gaussian white noise on a space of Hilbert--Schmidt operators as follows. For $j,k\in\NN$, let  $b_{jk}^{(r)}: H^r(\partial D) \to L^2(\partial D)$ denote the tensor product operator
	\begin{equation}\label{eqn:bklDefinition}
	 b_{jk}^{(r)}(f)= \phi_j^{(r)}\otimes \phi_k^{(0)} (f) := \ip{f,\phi_j^{(r)}}_{H^r(\partial D)} \phi_k^{(0)}, \quad f\in H^r(\partial D), \end{equation}
	 and define the space of linear operators	\begin{equation}\label{eqn:HHrDefinition}
	 \HH_r :=\Big\{T:H^r(\partial D) \to L^2(\partial D),~ T=\sum_{j,k=1}^\infty t_{jk} b_{jk}^{(r)} : t_{jk}\in \RR, \sum_{j,k=1}^\infty t_{jk}^2<\infty \Big\} 
	 	.\end{equation}  The elements of $\HH_r$ are the `Hilbert--Schmidt' operators between (the real-valued subsets of) the Hilbert spaces $H^r(\partial D)$ and $L^2(\partial D)$, see \cite{Aubin2011} Chapter 12. Moreover, $\HH_r$ is itself a (real) Hilbert space for the inner product
	 \[\ip{S,T}_{\HH_r}=\sum_{j,k=1}^\infty s_{jk}t_{jk}\equiv \sum_{j,k=1}^\infty \ip{S\phi_j^{(r)},\phi_k^{(0)}}_{L^2(\partial D)} \ip{T\phi_j^{(r)},\phi_k^{(0)}}_{L^2(\partial D)} .\] 
	We then consider observing a realisation of the Gaussian process 
	\begin{equation}\label{eqn:ContinuousModelDistributionalSense}
	 \begin{split}
	 \brackets[\big]{Y(T)&=\ip{\tilde{\Lambda}_\gamma,T}_{\HH_r} + \eps \WW(T) 	:T\in \HH_r},~\eps>0,  \text{ where }\\ \WW(T) &\equiv \ip{\WW,T}_{\HH_r}:=\sum_{j,k=1}^\infty g_{jk} \ip{T\phi_j^{(r)},\phi_k^{(0)}}_{L^2(\partial D)},
	 \text{ for }g_{jk}\iidsim N(0,1).
	 \end{split}
	 \end{equation}
This makes sense rigorously only if $\tilde{\Lambda}_\gamma \in \HH_r$, and it is proved  in \cref{sec:MappingPropertiesOfLambda_gamma}, \cref{lem:tildeLambdaIsHilbertSchmidt}, that this is indeed the case for any $\gamma \in \Gamma_{m, D'}$ and any $r\in\RR$.  

The process $\WW$ so defined is a Gaussian white noise (\textit{isonormal process}; see, e.g., p.19 in \cite{Gine2016}) indexed by the (real) Hilbert space $\HH_r$. 
A closed form description of the data in \cref{eqn:ContinuousModelDistributionalSense} is therefore
	\begin{equation} \label{eqn:Model:HilbertSpaceMeasurements}
	Y = \tilde{\Lambda}_\gamma +\eps \WW, \quad \eps>0.
	\end{equation}
We write $P^\gamma_{\eps,r}$ for the law of $Y$ in this last model. We often suppress the parameter $r$, and write $P^\gamma_\eps$ for the probability law and $E^\gamma_\eps$ for the corresponding expectation operator.

\medskip

The continuous model \cref{eqn:Model:HilbertSpaceMeasurements} is more convenient for the application of PDE techniques and facilitates a clearer exposition in the proofs to follow. We prove our main results \cref{thm:ExistenceOfEstimator,thm:MinimaxLowerBound} in that model initially. We will show that the models \cref{eqn:Model:HilbertSpaceMeasurements} and \cref{eqn:Model:HaarMeasurements} are asymptotically closely related to each other in a rigorous `Le Cam' sense, and that as a consequence, \cref{thm:ExistenceOfEstimator,thm:MinimaxLowerBound} also hold in the `electrode model' \cref{eqn:Model:HaarMeasurements}: see \cref{thm:MinimaxityInAllNoiseModels} for a precise statement. The intuitive idea can be summarised as follows: Model \cref{eqn:Model:HilbertSpaceMeasurements} with $r=0$ contains all the information available in model \cref{eqn:Model:HaarMeasurements} simply by evaluating $Y(T)$ with $T=\psi_p \otimes \psi_q$ for each $p,q\le P$ in \cref{eqn:ContinuousModelDistributionalSense}; conversely, as $P \to \infty$, one can approximate Laplace--Beltrami eigenfunctions via linear combinations of indicator functions (under appropriate conditions on the sets $I_p,p\leq P$), and in doing so, given data from model \cref{eqn:Model:HaarMeasurements} we approximately recover data from model \cref{eqn:Model:LaplaceBeltramiMeasurements} and ultimately then also from \cref{eqn:Model:HilbertSpaceMeasurements}. We refer to \cref{sec:EquivalenceOfNoiseModels} for rigorous details, where also the (simpler) equivalence of the models  \cref{eqn:Model:LaplaceBeltramiMeasurements} and  \cref{eqn:Model:HilbertSpaceMeasurements} is established. 

	\section{The Bayesian approach to the noisy Calder\'on problem}\label{sec:BayesianInterpretation}
	We now construct the estimator $\hat{\gamma}$ featuring in \cref{thm:ExistenceOfEstimator}. Following the Bayesian approach to inverse problems advocated by A. Stuart \cite{Stuart2010}, we will construct $\hat \gamma$ in terms of the posterior mean arising from a certain Gaussian process prior. In the context of the EIT inverse problem a Bayesian approach was proposed already in \cite{KKSV00}, and conceptually related work appears in fact much earlier in Diaconis \cite{Diaconis1988} who further traces some of the key ideas back to H. Poincar\'e -- see Chapter XV, \textsection216, in \cite{P1896} for what is possibly the first proposal of an infinite-dimensional Gaussian series prior in a numerical analysis context.

	\smallskip
	
	To this end we need to first establish the existence of a posterior distribution in our measurement setting.  In the Gaussian white noise model \cref{eqn:Model:HilbertSpaceMeasurements}, the log-likelihood function can be derived from the Cameron--Martin theorem in a suitable Hilbert space: precisely, the law $P^\gamma_\eps$ of $Y$  is dominated by the law $P^1_\eps$ of $\eps\WW$, with log-likelihood function
	\begin{equation}\label{eqn:LogLikelihood} \ell(\gamma)\equiv \log p^\gamma_\eps(Y) := \log \frac{dP^\gamma_\eps}{dP^1_\eps}(Y)  = \frac{1}{\eps^2} \ip{Y,\tilde{\Lambda}_\gamma}_{\HH_r}-\frac{1}{2\eps^2}\norm{\tilde{\Lambda}_\gamma}_{\HH_r}^2,~~\gamma \in \Gamma_{m, D'}.
	\end{equation} 
	See Section 7.4 in \cite{Nickl2017b} for a detailed derivation, which requires Borel-measurability (ensured by \cref{lem:ForwardStability} below) of the map $\gamma \mapsto \tilde{\Lambda}_\gamma$ from the (Polish) space $\Gamma_{m,D'}$ equipped with the $\norm{\cdot}_\infty$-topology into the Hilbert space $\HH_r$.
	
	Then for any prior (Borel) probability measure $\Pi$ on  $\Gamma_{m,D'}$, the posterior distribution given observations $Y$ is given by
	\begin{equation}\label{eqn:PosteriorDefinition}
	\Pi(B\mid Y) = \frac{\int_B p^\gamma_\eps (Y)\dPi(\gamma)}{\int_{\Gamma_{m, D'}} p^\gamma_\eps (Y) \dPi(\gamma )}, \quad B \subset \Gamma_{m, D'} \text{ Borel-measurable},
	\end{equation}
	see again Section 7.4 in \cite{Nickl2017b} (and also \cite{Ghosal2017}, eq (1.1)). We denote by $E^\Pi[\cdot]$ the expectation operator according to the prior, and by $E^\Pi[\mathrel{\;\cdot\;} \mid Y]$ the expectation according to the posterior. 
	
	\smallskip 
	
	What precedes is stated for a prior defined on the conductivities $\gamma$. The priors introduced below will be of the form $\gamma=\Phi \circ \theta$ for a suitable link function $\Phi$, where $\theta$ takes values in a \emph{linear} space, so that a Gaussian process prior can be assigned to $\theta$. Composition with the map $\Phi$ described in \cref{sec:PriorConstruction} gives rise to a measurable bijection between the parameter spaces for $\theta$ and for $\gamma$, and the comments above therefore equivalently yield the existence of the posterior distribution for $\theta$.

	\subsection{Prior construction}\label{sec:PriorConstruction}
	
	We define the prior on $\theta$ in terms of a base prior $\Pi'$. For the base prior we assume the following -- we refer, e.g., to \cite[Sections 2.1 and 2.6]{Gine2016} for the basic definitions of Gaussian measures and processes and their reproducing kernel Hilbert spaces (RKHS). 
	\begin{assumption}\label{assumption:pco}
		Let $\Pi'$ be a centred Gaussian Borel probability measure on the Banach space $C_u(D)$ of uniformly continuous real-valued functions on $D$, and let $\alpha, \beta$ be integers satisfying $\alpha>\beta>2+d/2$. Assume $\Pi'(H^\beta(D))=1$ and that the RKHS $(\mathcal H, \norm{\cdot}_{\mathcal H})$ of $\Pi'$ is continuously embedded into the Sobolev space $H^\alpha(D)$. 
	\end{assumption}
	Natural candidates for such priors are restrictions to $D$ of Gaussian processes whose covariances are given by Whittle--Mat\'ern kernels, see \cite{Ghosal2017}, p.313 and p.575 -- in these cases one can satisfy the assumption for any $2+d/2<\beta<\alpha-d/2$ by taking $\mathcal H$ to \textit{coincide} with the Sobolev space $H^\alpha(D)$. The restriction to integer-valued $\alpha, \beta$ is convenient to simplify some proofs.
	
	\smallskip
	
	In the proofs that follow we will require that the true $\gamma_0$ is in the `interior' of the support of the induced prior on $\gamma$, so recalling that \cref{thm:ExistenceOfEstimator} is stated uniformly over $\Gamma^\alpha_{m_0,D_0}(M)$, we choose $0<m_1<m_0<1$ and a domain $D_1$ such that $\bar {D}_0\subset D_1, \bar {D}_1\subset D$. Then let $\zeta: D\to [0,1]$ be a smooth cutoff function, identically one on $D_0$ and compactly supported in $D_1$. For a 
	link function $\Phi :\RR\to (m_1,\infty)$ to be specified, we define the prior $\Pi=\Pi_\eps$ as the (Borel) law in $C_u(D)$ of the random function
	\begin{equation} \label{eqn:tampering}
	\gamma = \Phi \circ \theta,~~ \theta(x) = \theta_\eps(x) =  \eps^{d/(\alpha+d)}\zeta(x) \theta'(x),~x \in D, ~ \theta' \sim \Pi',
	\end{equation}
	where, in a slight abuse of notation, we use the notation $\Pi$ for the prior laws both of $\theta$ and of the induced conductivity $\gamma=\Phi \circ \theta$. The link function $\Phi$ will be required to be \textit{regular} in the sense of \cite{Nickl2018}, that is to say, $\Phi$ is a smooth bijective function satisfying $\Phi(0)=1$, $\Phi'>0$ on $\RR$, and $\norm{\Phi^{(j)}}_{\infty}<\infty$ for all integers $j \ge 1$, with inverse function denoted by $\Phi^{-1}:(m_1,\infty) \to \RR$. We refer to \cite{Nickl2018} Example 8 where a regular link function is exhibited, and to \cite{Nickl2018} Lemma 29 for basic properties of such functions. 
	In particular we note that there are constants $C=C(\Phi)$, $c=c(\Phi,m_0,m)$, $C'=C'(\Phi,\alpha)$ and $c'=c'(\Phi,\alpha,m_0,m)$ such that for any bounded functions $\theta,\theta_0$, any integer $\alpha\geq d/2$ and any $\gamma_0,\gamma\in \Gamma_{m,D_1},~m\in(m_1,m_0)$, \begin{align}\label{eqn:NormPhiCircThetaEqualsNormTheta} \norm{\Phi \circ \theta-\Phi \circ \theta_0}_{\infty} &\leq C\norm{\theta-\theta_0}_{\infty}, \\
	\label{eqn:NormPhiInverseCircGammaLinfty}
	\norm{\Phi^{-1} \circ \gamma-\Phi^{-1} \circ \gamma_0}_{\infty}&\leq c\norm{\gamma-\gamma_0}_{\infty}
	\\
	\label{eqn:NormPhiCircThetaBound} \norm{\Phi\circ \theta}_{H^\alpha(D)}&\leq C'(1+\norm{\theta}_{H^\alpha(D)}^\alpha),
	\\
	\label{eqn:NormPhiInverseCircGammaBound}\norm{\Phi^{-1}\circ \gamma_0}_{H^\alpha(D)}&\leq c'(1+\norm{\gamma_0}_{H^\alpha(D)}^\alpha).
	\end{align}  
	The first inequality 
	is an immediate consequence of the mean value theorem and the third is given in \cite{Nickl2018} Lemma 29. The second and fourth inequalites follow from the same arguments, applied to the function $\Phi^{-1}$ (this can be seen to be regular on the domain $[m,\infty)$ for $m>m_1$ by considering explicit formulas for its derivatives).

	\subsection{Posterior contraction result}\label{sec:PosteriorContractionResults}
	For the following result we define
	\begin{equation}\label{eqn:XiEpsDelta} \xi_{\eps,\delta}=\log(1/\eps)^{-\delta}, \quad \eps,\delta>0.\end{equation}

	\begin{theorem}\label{thm:ContractionForTheta}
		For some $m_0\in(0,1)$, $D_0$ compactly contained in $D$, and $M>0$, suppose that the true conductivity $\gamma_0$ belongs to the set
		\begin{equation}\label{set}
		\Gamma_{m_0,D_0} \cap \braces{\Phi\circ \theta : \theta\in \Hh, \norm{\theta}_{\Hh}\leq M},
		\end{equation}
		and define $\theta_0=\Phi^{-1}\circ \gamma_0$.
		For $\Pi'$ satisfying \cref{assumption:pco} let $\Pi$ be the prior arising from \cref{eqn:tampering}, and denote by $\Pi(\mathrel{\;\cdot\;}\mid Y)$ the posterior distribution for $\theta$ arising from observations $Y$ in the model \cref{eqn:Model:HilbertSpaceMeasurements}. 
		Then there exist constants $C=C(M,m_0,m_1,D,D_0,D_1,\Phi,\zeta,r,\alpha,\beta)>0$ and $\delta=\delta(d,\beta)>0$ such that
		\begin{equation}\label{eqn:ContractionForTheta} \Pi\brackets[\big]{\norm{\theta-\theta_0}_{\infty}>C\xi_{\eps,\delta} \mid Y} \to^{P_\eps^{\gamma_0}} 0\text{ as }\eps \to 0.\end{equation}  Moreover, if $E^\Pi[\theta \mid Y]$ denotes the (Bochner) mean of $\Pi(\mathrel{\;\cdot\;}\mid Y)$, then for any $K>C$, 
		\begin{equation}\label{eqn:PosteriorMeanConsistent}
		\sup_{\gamma_0}  P^{\gamma_0}_\eps\brackets[\big]{\norm{ E^{\Pi}\sqbrackets{\theta \mid Y}-\theta_0}_{\infty}>K\xi_{\eps,\delta}} \to 0~\text{as } \eps \to 0,
		\end{equation}
		where the supremum extends over all $\gamma_0$ in the set (\ref{set}).
	\end{theorem}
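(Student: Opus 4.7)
Because the noise model \eqref{eqn:Model:HilbertSpaceMeasurements} is a dominated Gaussian shift with explicit log-likelihood \eqref{eqn:LogLikelihood}, the plan is to apply the general posterior contraction theorem of Ghosal--Ghosh--van der Vaart in the form developed for non-linear inverse problems in \cite{Nickl2017b, Nickl2018, Monard2019}. Contraction proceeds in two stages: first contract $\tilde{\Lambda}_\gamma$ to $\tilde{\Lambda}_{\gamma_0}$ in the `prediction norm' $\norm{\cdot}_{\HH_r}$ at a polynomial rate $\eps^\ast \asymp \eps^{\alpha/(\alpha+d)}$ (up to logs); then invert via the Alessandrini-type logarithmic stability estimate for the Calder\'on problem to convert this into the logarithmic $L^\infty$-rate on $\gamma$, and hence via \eqref{eqn:NormPhiInverseCircGammaLinfty} on $\theta$.

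\textbf{Step 1: contraction in prediction norm.} I will verify the three standard ingredients at rate $\eps^\ast$. \emph{(i) Prior mass / KL-concentration.} Since $\theta_0$ is compactly supported in $D_0$ where $\zeta \equiv 1$, and $\norm{\theta_0}_{\Hh} \le M$, the element $\eps^{-d/(\alpha+d)}\theta_0$ has RKHS norm of order $M \eps^{-d/(\alpha+d)}$ relative to the rescaled prior on $\theta$. Combined with Gaussian small-ball estimates for $\Pi'$ in $\norm{\cdot}_\infty$ (valid as $\Pi'$ is supported in $H^\beta \hookrightarrow C(D)$), this yields $\Pi(\norm{\theta-\theta_0}_\infty \le \eps^\ast) \ge \exp(-c_1\eps^{-2d/(\alpha+d)})$. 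The Lipschitz bound of \cref{lem:ForwardStability} then transfers this to the required KL small-ball bound for $\tilde{\Lambda}_\gamma$ in $\HH_r$. \emph{(ii) Sieve.} Set $\Theta_\eps = \{\eps^{d/(\alpha+d)}\zeta\theta' : \norm{\theta'}_{H^\beta(D)} \le R_\eps\}$ with $R_\eps$ a constant multiple of $\eps^{-d/(\alpha+d)}$; Borell's inequality gives $\Pi(\Theta_\eps^c) \le \exp(-C_0 \eps^{-2d/(\alpha+d)})$ with $C_0$ arbitrarily large. \emph{(iii) Tests.} On $\Theta_\eps$, Sobolev embedding plus \eqref{eqn:NormPhiCircThetaBound} bounds $\norm{\gamma}_{H^\alpha(D)}$ polynomially in $R_\eps$, and \cref{lem:ForwardStability} supplies a metric entropy bound for $\{\tilde{\Lambda}_\gamma : \theta \in \Theta_\eps\} \subset \HH_r$ compatible with the standard likelihood-ratio test construction for Gaussian white noise. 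Assembled, these yield
\[ \Pi\brackets[\big]{\norm{\tilde{\Lambda}_\gamma - \tilde{\Lambda}_{\gamma_0}}_{\HH_r} > L\eps^\ast \mid Y} \to 0 \quad \text{in } P^{\gamma_0}_\eps\text{-probability.} \]

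\textbf{Step 2: logarithmic stability inversion and posterior mean.} Intersected with $\Theta_\eps$, the posterior sits on $\gamma$ with an a priori $H^\beta$-bound of order $R_\eps^\beta$. Alessandrini's logarithmic stability \cite{Alessandrini1988} (or the two-dimensional version of \cite{NS10}) then asserts
\[ \norm{\gamma-\gamma_0}_\infty \lesssim \brackets[\big]{\log \tfrac{1}{\norm{\Lambda_\gamma - \Lambda_{\gamma_0}}_{\HH_r}}}^{-\delta_0} \]
for some $\delta_0 = \delta_0(d,\beta)>0$, modulo a factor depending logarithmically on the a priori $H^\beta$-bound. Substituting the prediction rate $\eps^\ast$ and absorbing the $R_\eps^\beta$-factor into the logarithm gives $\norm{\gamma-\gamma_0}_\infty \lesssim \xi_{\eps,\delta}$ for a slightly smaller exponent $\delta = \delta(d,\beta)>0$, and \eqref{eqn:NormPhiInverseCircGammaLinfty} transfers this to $\theta$, yielding \eqref{eqn:ContractionForTheta}. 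For \eqref{eqn:PosteriorMeanConsistent}, Jensen gives $\norm{E^\Pi[\theta \mid Y] - \theta_0}_\infty \le E^\Pi[\norm{\theta-\theta_0}_\infty \mid Y]$; splitting over $\Theta_\eps$ and its complement, the first piece is controlled by \eqref{eqn:ContractionForTheta} together with the uniform $L^\infty$-bound on $\Theta_\eps$, while the second is negligible by the sieve tail bound combined with the standard exponential lower bound on the evidence integral $\int p^\gamma_\eps(Y)\,d\Pi$ on the event where prior mass concentrates.

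\textbf{Main obstacle.} The key technical difficulty is coupling the Bayesian sieve --- whose $H^\beta$-radius grows polynomially in $\eps^{-1}$ --- with Alessandrini-type stability, which is classically stated for a fixed a priori class. The logarithmic character of the stability is what saves the argument: a polynomial-in-$\eps^{-1}$ a priori bound contributes only an additive $\log(1/\eps)$ inside the stability logarithm, so the exponent $\delta$ degrades only by a constant factor. Making this dependence explicit, while simultaneously ensuring the polynomial Lipschitz estimate of \cref{lem:ForwardStability} is strong enough to provide the tests (which relies crucially on the uniform ellipticity $\gamma \ge m_1 > 0$ enforced by the tampering $\Phi \circ \theta$ in \eqref{eqn:tampering}), is where the bulk of the work lies.
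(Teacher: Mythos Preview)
Your overall strategy matches the paper's: contract $\tilde{\Lambda}_\gamma$ to $\tilde{\Lambda}_{\gamma_0}$ in $\HH_r$ at the polynomial rate $\eta_\eps=\eps^{\alpha/(\alpha+d)}$, show the posterior concentrates on an $H^\beta$-bounded set, then invert via the Alessandrini-type stability (\cref{lem:InverseStability}) to obtain the logarithmic rate on $\gamma$, and pass to $\theta$ via \eqref{eqn:NormPhiInverseCircGammaLinfty}. The posterior-mean argument via Jensen and the evidence lower bound is also essentially what the paper does, with a Cauchy--Schwarz/uniform-integrability refinement to handle the tail.

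However, your diagnosis of the ``main obstacle'' reflects a real misunderstanding of the role of the rescaling in \eqref{eqn:tampering}. On your own sieve $\Theta_\eps$ one has $\theta = \eps^{d/(\alpha+d)}\zeta\theta'$ with $\norm{\theta'}_{H^\beta} \le R_\eps$ of order $\eps^{-d/(\alpha+d)}$, so $\norm{\theta}_{H^\beta} \le C\eps^{d/(\alpha+d)} R_\eps$ is bounded by a \emph{fixed constant}, and via the $H^\beta$-analogue of \eqref{eqn:NormPhiCircThetaBound} so is $\norm{\gamma}_{H^\beta}$. The a priori $H^\beta$-bound does not grow; securing a fixed bound is precisely the purpose of the $\eps^{d/(\alpha+d)}$ factor in the prior. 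The paper makes this explicit in \cref{lem:C2ConcentrationOfPosterior} (posterior mass on $\{\norm{\gamma}_{H^\beta}>M'\}$ is exponentially small for a fixed $M'$, via Fernique's theorem) and then applies \cref{lem:InverseStability} on that fixed class. Your claim that a polynomially-growing a priori bound would be harmlessly absorbed ``inside the stability logarithm'' is unjustified: the constant in Alessandrini's estimate depends on the a priori $H^\beta$-bound in a way you have not tracked, and polynomial growth there would in general destroy the logarithmic rate. Fortunately the issue evaporates once the sieve bound is computed correctly.

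A smaller methodological difference: the paper does not build tests via metric entropy of the forward image. Instead (\cref{lem:ExistenceOfTests}) it constructs a plug-in estimator $\hat{\Lambda}=\pi_{JJ}Y$ with $J=\floor{\eta_\eps/\eps}$, controls the bias by the low-rank approximation bound for $\tilde{\Lambda}_\gamma$ (\cref{lem:ProjectionErrorDecaysAsMinJK^-nu}) and the variance by a $\chi^2$ tail bound; the resulting concentration yields the tests directly and requires only a sup-norm bound on $\gamma$, supplied by Sobolev embedding from the fixed $H^\beta$ bound. Your entropy route is plausible but heavier, and you would still need to check that the Lipschitz bound of \cref{lem:ForwardStability} (which loses a square root) gives adequate entropy control at the required scale.
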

	
	\Cref{thm:ContractionForTheta}, whose proof is given in \cref{sec:Proofs:PosteriorContraction}, immediately implies \cref{thm:ExistenceOfEstimator}: Indeed, given an integer $\alpha>3+d$, let $\Pi$ be a prior from \cref{eqn:tampering} whose base prior $\Pi'$ satisfies \cref{assumption:pco} with RKHS $\Hh=H^{\alpha}(D)$. [For instance, take a Whittle-Mat\'ern prior, noting that a choice of integer $\beta>2+d/2$ is then admissible.] Let $\hat{\theta}= E^\Pi[\theta \mid Y]$ be the associated posterior mean and define $\hat{\gamma}=\Phi\circ\hat{\theta}$. Then \cref{eqn:NormPhiCircThetaEqualsNormTheta,eqn:PosteriorMeanConsistent} will imply \cref{thm:ExistenceOfEstimator}, so it suffices to show that the conditions of \cref{thm:ExistenceOfEstimator} imply those of \cref{thm:ContractionForTheta}, in particular that for any $\gamma_0\in \Gamma^\alpha_{ m_0, D_0}(M) 
	$, there exists an $M'=M'(\alpha, M, m_0, D, D_0)$
	such that $\norm{\theta_0}_{H^\alpha(D)}\leq M'.$ But this is immediate from \cref{eqn:NormPhiInverseCircGammaBound}. 
	
	
	\section{Concluding remarks}\label{sec:FinalRemarks}
	\begin{remark} [Computational aspects]
		The posterior mean $E^\Pi\sqbrackets{\theta \mid Y}$ can be calculated via MCMC or expectation-propagation methods (naturally in the discretisations \cref{eqn:Model:HaarMeasurements} or \cref{eqn:Model:LaplaceBeltramiMeasurements} of our continuous model \cref{eqn:Model:HilbertSpaceMeasurements}). This allows one to bypass potential difficulties encountered by optimisation based methods in non-linear settings such as the present one. For instance, the MAP estimates studied in \cite{BM12, Nickl2018} may not provably recover global optima in the EIT setting, since the non-linearity of the map $\theta \mapsto \Lambda_{\Phi(\theta)}\equiv \Lambda_\gamma$ implies that the associated least squares criterion is not necessarily convex. Variational methods such as those proposed in  \cite{HKQ18} for EIT also typically require a convex relaxation to be efficiently computable, see \cite{KV87}.
		
		The pCN algorithm \cite{CRSW13} allows one to sample from posterior distributions in general inverse problems as long as the forward map $\theta \mapsto \Lambda_{\Phi(\theta)}$ can be evaluated, which in our setting has the basic cost of (numerically) solving the standard elliptic PDE \cref{eqn:DirichletProblem}. Even in the absence of log-concavity of the posterior measure one can give sampling guarantees for this algorithm, see \cite{HSV14}, so that the approximate computation of $E^\Pi[\theta \mid Y]$ by the sample average $(1/M) \sum_m \theta_m$ of the pCN Markov chain is provably possible at any given noise level $\eps$. Related work on MCMC-based approaches in the setting of electrical impedance tomography can be found in \cite{KKSV00, Roininen2014, Dunlop2016}, wherein also many further references can be found. Instead of MCMC methods one can also resort to variational Bayes methods -- see for example \cite{Gehre2014}, where computation of the posterior mean is addressed specifically for the EIT problem relevant in the present paper. 
	\end{remark}
	\begin{remark}[Estimation of $\gamma$ at the boundary]
		In \cref{thm:ExistenceOfEstimator} we assume that the true conductivity $\gamma_0$ equals $1$ on the complement of some known set $D_0 \subset D$. In the proofs we work mostly with the \emph{differences} $\Lambda_\gamma-\Lambda_{\gamma_0}$ and the results can be expected to extend to $\gamma_0$ being known (and positive) on $D\setminus D_0$. Estimation of the value of $\gamma_0$ at $\partial D$ is an `easier' (less ill-posed) problem and is addressed, in a statistical setting, in \cite{CG17}.
	\end{remark}
	
	\begin{remark}[Smoothness of $\gamma$] \label{rem:RougherPriors}
		In \cref{thm:ExistenceOfEstimator} we assume $\alpha>3+d$, and hence that the true conductivity is sufficiently smooth in a Sobolev sense. In terms of the proofs, this requirement primarily arises from the stability estimates employed (\cite{Alessandrini1988} requires a minimal Sobolev smoothness of $\gamma$ of degree $\beta>2+d/2$), and further from analytical support properties of Gaussian process priors (for a Whittle--Mat\'ern process with RKHS $H^\alpha(D)$ to be supported in $H^\beta(D)$, one needs $\alpha>\beta+d/2$), necessitating $\alpha>2+d$. To avoid technicalities with non-integer Sobolev spaces, we strengthen our hypothesis to $\alpha>3+d$.
		
		An interesting direction for further research would seek to replace the smoothness assumption on $\gamma_0$ with different structural assumptions. For example, if we consider a class of functions that are piecewise constant on a known finite collection of subsets of $D$, better than logarithmic stability results are available in \cite{AV05}, and faster convergence rates may then be obtained. Note that this would require different methods, e.g., prior measures that can model discontinuous functions, which is beyond the scope of the present paper. 
	\end{remark}

	\section{Proofs}\label{sec:Proofs}

The proof of \cref{thm:ContractionForTheta} follows the template devised in \cite{Monard2019} for a very different inverse problem. We first show that the posterior distribution induced on the operators $\Lambda_\gamma$ contracts around $\Lambda_{\gamma_0}$ (see \cref{thm:ContractionForLambda}), by combining tools from Bayesian nonparametric statistics \cite{vdVvZ08, Ghosal2017} with analytical properties of the `regression' operators $\Lambda_\gamma$ (specifically of their low-rank approximations). Dealing with the unboundedness of Gaussian priors for $\theta$ and with the non-linearity of the composite forward map $\theta \mapsto \Lambda_{\Phi(\theta)}$ poses a main challenge in the proof. As in \cite{Monard2019} this challenge is overcome using the rescaling in \cref{eqn:tampering} of the base prior $\Pi'$. For such priors the posterior distribution concentrates on sufficiently regular conductivities $\gamma$ that the stability estimates of \cite{Alessandrini1988, NS10} -- which we show to hold also for the information-theoretically relevant norms here -- can be applied, resulting in contraction of the posterior distributions of $\gamma$ and $\theta$ about $\gamma_0$ and $\theta_0$ respectively. Finally, to deduce consistency of the posterior mean, we adapt a quantitative uniform integrability argument from \cite{Monard2019} to the present situation. The proof of Theorem \ref{thm:MinimaxLowerBound} follows from the `instability' estimate in \cite{Mandache2001} and common information-theoretic lower bound techniques for statistical estimators as in \cite{T09, Gine2016}.
	
\begin{remark*}
	The model \cref{eqn:Model:HilbertSpaceMeasurements} naturally considers $\Lambda_\gamma$ as an element of the \textit{real} Hilbert space $\HH_r$ consisting of Hilbert-Schmidt operators between the sets of \textit{real-valued} functions of $H^r(\partial D)/\CC$ and $L^2_\diamond(\partial D)$, respectively. Various results in the literature -- such as the stability and instability results of \cite{Alessandrini1988} and \cite{Mandache2001} to be used below -- regard $\Lambda_\gamma$ as an element of the space $\HH_{r,\mathbb C}$ of Hilbert-Schmidt operators between \textit{complex-valued} function spaces $H^r(\partial D)/\CC, L^2_\diamond(\partial D)$. This difference is inconsequential since $\HH_r$ embeds into $\HH_{r, \mathbb C}$ via the unique extension $T_\mathbb C(f+ig)=T(f) + i T(g)$ for $T \in \HH_r$, and the Hilbert--Schmidt norm of this larger space, when restricted to $\HH_r$, coincides with the $\HH_r$ norm. In fact, since we regard $b_{jk}^{(r)} \in \HH_r$ in \cref{eqn:bklDefinition} as a map from $H^r(\partial D)$ to $L^2_\diamond(\partial D)$, strictly speaking we have already embedded $\HH_r$ into $\HH_{r,\CC}$ in this way. 
\end{remark*}
	

	\subsection{Low rank approximation of $\tilde {\Lambda}_\gamma$}
	
	A key idea used in various proofs that follow is that we can project the operator $\tilde{\Lambda}_\gamma$ onto a finite-dimensional subspace and incur only a small error. We recall from \cref{eqn:bklDefinition} the orthonormal basis $(b_{jk}^{(r)})_{j,k\in\NN}$ of $\HH_r$ consisting of tensor product operators 
	\begin{equation*}
	b_{jk}^{(r)}(f)= \phi_j^{(r)} \otimes \phi_k^{(0)} (f) := \ip{f,\phi_j^{(r)}}_{H^r(\partial D)} \phi_k^{(0)}, \quad f\in H^r(\partial D)/\CC,~j,k\in\NN,\end{equation*} where the Laplace--Beltrami eigenfunctions $(\phi_j^{(r)})_{j\in\NN}$  were introduced before \cref{eqn:Model:LaplaceBeltramiMeasurements}. An operator $U\in\HH_r$ has coefficients $\ip{U,b_{jk}^{(r)}}_{\HH_r}=\ip{U\phi_j^{(r)},\phi_k^{(0)}}_{L^2(\partial D)}$ with respect to this basis, and we define the projection map $\pi_{JK}$ by 
	\begin{equation}\label{eqn:def:PiJK}
	\pi_{JK} U = \sum_{j\leq J} \sum_{k\leq K} \ip{U\phi_j^{(r)},\phi_k^{(0)}}_{L^2(\partial D)}b_{jk}^{(r)}.
	\end{equation}
	
	\begin{lemma}\label{lem:ProjectionErrorDecaysAsMinJK^-nu}
		For constants $m\in (0,1),M>1$ and some domain $D'$ compactly contained in $D$, let $\gamma\in\Gamma_{m, D'}$ be bounded by $M$ on $D$. For any $\nu>0$ there is a constant $C=C(\nu,D,D',r)>0$ such that \[\norm{\tilde{\Lambda}_\gamma-\pi_{JK} \tilde{\Lambda}_\gamma}_{\HH_r} \leq C\tfrac{M}{m}\min\brackets{J,K}^{-\nu}.\] 
	\end{lemma}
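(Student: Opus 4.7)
The plan is to exploit that $\tilde\Lambda_\gamma$ is infinitely smoothing on $\partial D$---because the perturbation $\gamma-1$ vanishes outside $D'\subset\subset D$---and then to read off the decay of its Hilbert--Schmidt matrix coefficients.

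The central ingredient is the a priori bound that for every $s\in\RR$ and $N\in\NN$, there is a constant $C_{s,N}$, depending only on $s,N,D,D',r$, such that
\[\norm{\tilde\Lambda_\gamma f}_{H^N(\partial D)}\le C_{s,N}\tfrac{M}{m}\norm{f}_{H^s(\partial D)/\CC}\qquad\text{for all }\gamma\in\Gamma_{m,D'}\text{ with }\norm{\gamma}_\infty\le M.\]
To prove it, I set $w:=u_{\gamma,f}-u_{1,f}$; a direct computation gives
\[\Div(\gamma\Grad w)=-\Div((\gamma-1)\Grad u_{1,f})\quad\text{in }D,\qquad w=0\text{ on }\partial D,\]
with right-hand side supported in $\overline{D'}$, and $\tilde\Lambda_\gamma f=\partial_\nu w|_{\partial D}$ since $\gamma=1$ at $\partial D$. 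The function $u_{1,f}$ is harmonic throughout $D$, so interior elliptic regularity---obtained, for very negative $s$, by pairing $f$ against the smooth Poisson kernel---gives $\norm{u_{1,f}}_{H^1(D')}\le C\norm{f}_{H^s}$ for every $s\in\RR$. The coercive $H^1$-estimate for divergence-form operators with $\gamma\ge m$ then yields $\norm{w}_{H^1(D)}\le(C/m)\norm{(\gamma-1)\Grad u_{1,f}}_{L^2}\le C(M/m)\norm{f}_{H^s}$. Finally, $w$ is harmonic on the annular region $A:=D\setminus\overline{D'}$ with $w=0$ on the smooth portion $\partial D$ of $\partial A$, so boundary elliptic regularity gives $\norm{w}_{H^{N+2}(A')}\le C_N\norm{w}_{L^2(A)}$ on any open $A'\subset A$ with $\overline{A'}\setminus\partial D\subset A$, and the trace theorem converts this to the required bound on $\norm{\partial_\nu w|_{\partial D}}_{H^N(\partial D)}$.

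Given the key estimate, Cauchy--Schwarz together with the normalisation $\norm{\phi_k^{(s)}}_{H^t(\partial D)}\asymp\lambda_k^{(t-s)/2}$ for Laplace--Beltrami eigenvalues $\lambda_k$ yield, for any $a,b\ge 0$,
\[\abs[\big]{\ip{\tilde\Lambda_\gamma\phi_j^{(r)},\phi_k^{(0)}}_{L^2(\partial D)}}\le\norm{\tilde\Lambda_\gamma\phi_j^{(r)}}_{H^b}\norm{\phi_k^{(0)}}_{H^{-b}}\le C_{a,b}\tfrac{M}{m}\lambda_j^{-a/2}\lambda_k^{-b/2},\]
by choosing the input regularity $s$ in the key estimate sufficiently negative. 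Weyl's law $\lambda_j\asymp j^{2/(d-1)}$ on the $(d-1)$-manifold $\partial D$ then gives, for $a$ and $b$ chosen large enough depending only on $\nu$ and $d$,
\[\norm{\tilde\Lambda_\gamma-\pi_{JK}\tilde\Lambda_\gamma}_{\HH_r}^2\le\sum_{j>J}\sum_{k}\abs[\big]{\ip{\tilde\Lambda_\gamma\phi_j^{(r)},\phi_k^{(0)}}_{L^2}}^2+\sum_{j}\sum_{k>K}\abs[\big]{\ip{\tilde\Lambda_\gamma\phi_j^{(r)},\phi_k^{(0)}}_{L^2}}^2\le C\tfrac{M^2}{m^2}\brackets[\big]{J^{-2\nu}+K^{-2\nu}},\]
and extracting a square root yields the claim.

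The main obstacle is the key estimate, and within it the interior regularity of $u_{1,f}$ for boundary data of arbitrarily low Sobolev regularity (which forces a distributional interpretation via the Poisson kernel) together with the careful tracking of the linear $M/m$-dependence of the constant through the coercive $H^1$-bound and the boundary elliptic regularity of $w$ near $\partial D$. Once that is in place, the remaining steps reduce to routine Weyl-law bookkeeping.
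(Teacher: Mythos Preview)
Your proof is correct and follows essentially the same route as the paper: both arguments rest on the infinitely smoothing property $\norm{\tilde\Lambda_\gamma}_{H^s\to H^t}\le C(M/m)$ (the paper's \cref{lem:tildeLambdaInfinitelySmoothing}, whose proof matches your sketch via $w=u_{\gamma,f}-u_{1,f}$ and harmonicity near the boundary) combined with Weyl asymptotics to control the tail of the Hilbert--Schmidt coefficients. The only difference is packaging: the paper abstracts the Weyl-law bookkeeping into a separate projection-approximation lemma (\cref{lem:ProjectionsApproximateHSOperatorsWell}) and then applies both results in one line, whereas you carry out the coefficient estimate and tail summation directly.
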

	\begin{proof}
		Apply \cref{lem:ProjectionsApproximateHSOperatorsWell} from \cref{sec:ComparisonResultsForHilbertSchmidtOperators} with $s=0$, $p=r-\nu(d-1)$ and $q=\nu(d-1)$, and note $\norm{\tilde{\Lambda}_\gamma}_{H^{p-(d-1)}(D) \to H^q(D)}\leq C\tfrac{M}{m}$ for such a constant $C$ by \cref{lem:tildeLambdaInfinitelySmoothing}. \qedhere
	\end{proof}

	The proofs of the stability results for the Calder{\'o}n problem in the next section involve the $H^{1/2}(\partial D)/\CC \to H^{-1/2}(\partial D)$ operator norm, which we denote by $\norm{\cdot}_*$. To connect this norm to the information-theoretically relevant $\HH_r$-norm, the following consequence of \cref{lem:ProjectionErrorDecaysAsMinJK^-nu} will be useful.

	\begin{lemma}\label{lem:HHrNormEquivalentTo*Norm}
		For $m\in (0,1)$, $M_0,M_1>1$ and $D'$ a domain compactly contained in $D$, let $\gamma_0,\gamma \in \Gamma_{m,D'}$ be bounded on $D$ by $M_0$ and $M_1$ respectively. Then there are constants $C_1$ and $C_2$ depending only on $r$, $D$ and $D_0$ such that if  $\norm{\Lambda_\gamma-\Lambda_{\gamma_0}}_*\leq 1$ then
		\begin{equation}\label{eqn:HHrNormEquivalentTo*Norm} \norm{\Lambda_\gamma-\Lambda_{\gamma_0}}_{\HH_r} \leq C_1 \brackets[\big]{\tfrac{M_1+M_0}{m}\norm{\Lambda_\gamma-\Lambda_{\gamma_0}}_{*}}^{1/2} ,
		\end{equation}
		and if $\norm{\Lambda_\gamma-\Lambda_{\gamma_0}}_{\HH_r}\leq 1$ then
		\begin{equation}\label{deep}
		\norm{\Lambda_\gamma-\Lambda_{\gamma_0}}_*\leq C_2\brackets[\big]{ \tfrac{M_1+M_0}{m} \norm{\Lambda_\gamma-\Lambda_{\gamma_0}}_{\HH_r}}^{1/2}.\end{equation}
	\end{lemma}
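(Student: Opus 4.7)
The plan is to decompose $T := \Lambda_\gamma - \Lambda_{\gamma_0} = \tilde\Lambda_\gamma - \tilde\Lambda_{\gamma_0}$ into a finite-rank part (where Hilbert--Schmidt and operator norms can be compared by elementary duality) and a tail (controlled via the smoothing property of $T$). By linearity of $\pi_{JK}$ and two applications of \cref{lem:ProjectionErrorDecaysAsMinJK^-nu}, the tail satisfies $\norm{T - \pi_{JK}T}_{\HH_r} \leq C(\nu) \tfrac{M_0+M_1}{m}\min(J,K)^{-\nu}$ for any $\nu>0$, so the tail can be made essentially negligible by taking $\nu$ sufficiently large. Both inequalities then follow by balancing a finite-rank estimate against this tail and optimising in $J,K$.

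For \cref{eqn:HHrNormEquivalentTo*Norm}, I would write $\norm{T}_{\HH_r} \leq \norm{\pi_{JK}T}_{\HH_r} + \norm{T-\pi_{JK}T}_{\HH_r}$. The first term I estimate coordinate-wise: extending the $L^2(\partial D)$ pairing to the $H^{-1/2}$--$H^{1/2}$ duality,
\begin{equation*}
	\abs{\ip{T\phi_j^{(r)},\phi_k^{(0)}}_{L^2(\partial D)}} \leq \norm{T}_* \, \norm{\phi_j^{(r)}}_{H^{1/2}(\partial D)} \, \norm{\phi_k^{(0)}}_{H^{1/2}(\partial D)},
\end{equation*}
so that
\begin{equation*}
	\norm{\pi_{JK}T}_{\HH_r}^2 \leq \norm{T}_*^2 \Big(\sum_{j\leq J}\norm{\phi_j^{(r)}}_{H^{1/2}}^2\Big)\Big(\sum_{k\leq K}\norm{\phi_k^{(0)}}_{H^{1/2}}^2\Big) \leq C\norm{T}_*^2 J^a K^b,
\end{equation*}
where the polynomial growth of the sums follows from Weyl's law on the $(d-1)$-dimensional manifold $\partial D$ together with $\phi_j^{(r)}\propto (1+\lambda_j)^{-r/2}\phi_j$. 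Setting $J=K$ and choosing $\nu$ equal to $(a+b)/2$ in the tail estimate balances the two contributions; the optimal $J$ is $\geq 1$ thanks to the hypothesis $\norm{T}_* \leq 1$ (and since $M_i > 1 > m$ implies $(M_0+M_1)/m\geq 2$). A routine optimisation then yields \cref{eqn:HHrNormEquivalentTo*Norm} with exponent $1/2$.

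For \cref{deep} I would argue dually. Fix $f \in H^{1/2}(\partial D)/\CC$ with $\norm{f}_{H^{1/2}}=1$ and split $f = f_{\leq J} + f_{>J}$ in the $L^2$-orthonormal Laplace--Beltrami basis. For the low-frequency part, $\norm{Tf_{\leq J}}_{H^{-1/2}} \leq \norm{Tf_{\leq J}}_{L^2} \leq \norm{T}_{H^r\to L^2}\norm{f_{\leq J}}_{H^r} \leq \norm{T}_{\HH_r}\norm{f_{\leq J}}_{H^r}$ (the operator norm is dominated by the Hilbert--Schmidt norm), and $\norm{f_{\leq J}}_{H^r}$ grows polynomially in $J$ by Weyl. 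For the high-frequency part, \cref{lem:tildeLambdaInfinitelySmoothing} yields $\norm{T}_{H^{-N}\to H^{-1/2}} \leq C(M_0+M_1)/m$ for any $N$, and a direct calculation in the eigenbasis bounds $\norm{f_{>J}}_{H^{-N}}$ by a power of $J$ of arbitrarily negative order times $\norm{f}_{H^{1/2}}$. Balancing with the same optimisation as before (using $\norm{T}_{\HH_r}\leq 1$ to guarantee the optimal $J\geq 1$) and taking the supremum over $f$ gives \cref{deep} with exponent $1/2$.

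The main work is bookkeeping with Sobolev exponents and Weyl asymptotics; because \cref{lem:ProjectionErrorDecaysAsMinJK^-nu} and \cref{lem:tildeLambdaInfinitelySmoothing} both permit arbitrary polynomial rates of decay, the $1/2$ exponent in the conclusion comes entirely from equating the polynomial growth of the finite-rank estimate with the polynomial decay of the tail, rather than from any sharp analytic inequality.
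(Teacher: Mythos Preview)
Your argument is correct. For \cref{eqn:HHrNormEquivalentTo*Norm} you take essentially the same route as the paper: decompose $T$ via $\pi_{JJ}$, bound the tail by \cref{lem:ProjectionErrorDecaysAsMinJK^-nu}, and bound the finite-rank part by a polynomially growing factor times $\norm{T}_*$. The only difference is cosmetic: you estimate the coefficients $\ip{T\phi_j^{(r)},\phi_k^{(0)}}_{L^2}$ directly via the $H^{1/2}$--$H^{-1/2}$ duality pairing and Weyl's law, whereas the paper packages the identical calculation into \cref{lem:EquivalenceOfProjectedNorms} (passing from $\HH_r$ to $\Ll_2(H^{d-1/2},H^{-1/2})$) followed by \cref{lem:ProjectionsApproximateHSOperatorsWell} (bounding that Hilbert--Schmidt norm by $\norm{T}_*$).

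For \cref{deep} your route is genuinely different. The paper first dominates $\norm{\cdot}_*$ by the Hilbert--Schmidt norm $\norm{\cdot}_{\Ll_2(H^{1/2},H^{-1/2})}$ and then reruns the projection argument of the first half in that second Hilbert--Schmidt space. You instead work directly at the operator-norm level, splitting the test function $f=f_{\le J}+f_{>J}$: the low-frequency part is handled by $\norm{T}_{H^r\to L^2}\le\norm{T}_{\HH_r}$ together with a Weyl bound on $\norm{f_{\le J}}_{H^r}$, and the high-frequency part by the smoothing estimate \cref{lem:tildeLambdaInfinitelySmoothing} applied with a very negative domain exponent. Your approach is slightly more elementary (it never introduces a second Hilbert--Schmidt space), while the paper's has the virtue of being a near-verbatim repetition of the first half. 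Both yield the exponent $1/2$ for exactly the reason you identify: one is balancing an arbitrarily fast polynomial decay against a fixed polynomial growth, so the optimised exponent can be brought to $1/2$ by choosing $\nu$ (or $N$) appropriately.
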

	\begin{proof}
		For $J>0$ and $\nu>0$ to be chosen, by \cref{lem:ProjectionErrorDecaysAsMinJK^-nu} we have 
		\[ \norm{\tilde{\Lambda}_\gamma-\pi_{JJ} \tilde{\Lambda}_\gamma}_{\HH_r} \leq C\tfrac{M_1}{m}J^{-\nu},\] for a constant $C=C(\nu,D,D',r)$, and a corresponding bound 
		holds for $\norm{\tilde{\Lambda}_{\gamma_0}-\pi_{JJ}\tilde{\Lambda}_{\gamma_0}}_{\HH_r}$.
		An application of \cref{lem:EquivalenceOfProjectedNorms} with $s=0$, $p=d-1/2,$ and $q=-1/2$, also yields (with $x_+=\max(x,0)$, and $\Ll_2(H^{d-1/2},H^{-1/2})$ a space of Hilbert--Schmidt operators as defined in \cref{sec:ComparisonResultsForHilbertSchmidtOperators})
		\begin{align*} \norm{\pi_{JJ} \tilde{\Lambda}_\gamma - \pi_{JJ} \tilde{\Lambda}_{\gamma_0}}_{\HH_r} & \leq C' (1+J^{1/(d-1)})^{1/2+(d-1/2-r)_+}\norm{\pi_{JJ} (\tilde{\Lambda}_\gamma - \tilde{\Lambda}_{\gamma_0})}_{\Ll_2(H^{d-1/2},H^{-1/2})} \\ & \leq C' (1+J^{1/(d-1)})^{(d+\abs{r})}\norm{\tilde{\Lambda}_\gamma - \tilde{\Lambda}_{\gamma_0}}_{\Ll_2(H^{d-1/2},H^{-1/2})} \\
		&\leq c'(1+J^{1/(d-1)})^{(d+\abs{r})} \norm{\tilde{\Lambda}_\gamma - \tilde{\Lambda}_{\gamma_0}}_*,\end{align*} for constants $C', c'$ depending on $D,r$,
		where we use \cref{lem:ProjectionsApproximateHSOperatorsWell} to obtain the final inequality. Since $\Lambda_\gamma-\Lambda_{\gamma_0}=\tilde{\Lambda}_\gamma - \tilde{\Lambda}_{\gamma_0}$, we deduce, for a constant $C''$ that
		\begin{equation*} 
		\begin{split} \norm{\Lambda_\gamma-\Lambda_{\gamma_0}}_{\HH_r} &\leq \norm{\tilde{\Lambda}_\gamma-\pi_{JJ} \tilde{\Lambda}_\gamma}_{\HH_r}+\norm{\tilde{\Lambda}_{\gamma_0}-\pi_{JJ}\tilde{\Lambda}_{\gamma_0}}_{\HH_r}+\norm{\pi_{JJ}\tilde{\Lambda}_\gamma-\pi_{JJ} \tilde{\Lambda}_{\gamma_0}}_{\HH_r} \\ 
		& \leq C''\brackets[\big]{\brackets[\big]{\tfrac{M_1+M_0}{m}}J^{-\nu} + J^{(d+\abs{r})/(d-1)} \norm{\tilde{\Lambda}_\gamma - \tilde{\Lambda}_{\gamma_0}}_* }. \end{split}
		\end{equation*}
		Since $\norm{\tilde{\Lambda}_\gamma - \tilde{\Lambda}_{\gamma_0}}_*\leq1$, we can choose an integer $J$ to balance the two terms up to a constant (take $J=\floor{\big(\frac{m}{M_0+M_1}\norm{\tilde{\Lambda}_\gamma-\tilde{\Lambda}_{\gamma_0}}_*\big)^{-(d-1)/(\nu(d-1)+d+\abs{r})}}$). This yields, for a constant $c''$,
		\[\norm{\Lambda_\gamma-\Lambda_{\gamma_0}}_{\HH_r} \leq c''\brackets[\big]{\tfrac{M_1+M_0}{m}}\brackets[\Big]{\brackets[\big]{\tfrac{M_1+M_0}{m}}^{-1} \norm{\Lambda_\gamma-\Lambda_{\gamma_0}}_{*}}^{\nu(d-1)/(\nu(d-1)+d+\abs{r})}.\] 
		Choosing $\nu= (d+\abs{r})/(d-1)$ yields \cref{eqn:HHrNormEquivalentTo*Norm}.
		
		For (\ref{deep}), given that $\norm{\Lambda_\gamma-\Lambda_{\gamma_0}}_*\leq \norm{\Lambda_\gamma-\Lambda_{\gamma_0}}_{\Ll_2(H^{1/2}(\partial D)/\CC,H^{-1/2}(\partial D))}$ (using the general fact that the Hilbert--Schmidt norm upper bounds the operator norm of a linear operator between separable Hilbert spaces), and the observation that the proof of \cref{lem:ProjectionErrorDecaysAsMinJK^-nu} can be adapted to apply with the $\Ll_2(H^{1/2}(\partial D)/\CC,H^{-1/2}(\partial D))$ norm in place of the $\HH_r$ norm, an almost identical argument to the above yields
		\begin{equation*}\norm{\Lambda_\gamma-\Lambda_{\gamma_0}}_{*} \leq C\brackets[\big]{ \brackets[\big]{\tfrac{M_1+M_0}{m}} J^{-\nu} + J^{(r-1/2)_+/(d-1)} \norm{\Lambda_\gamma-\Lambda_{\gamma_0}}_{\HH_r}}.\end{equation*} 
		Choosing  $J$ to balance the terms yields \[\norm{\Lambda_\gamma-\Lambda_{\gamma_0}}_{*} \leq C'\brackets[\big]{\tfrac{M_1+M_0}{m}}\brackets[\Big]{\brackets[\big]{\tfrac{M_1+M_0}{m}}^{-1} \norm{\Lambda_\gamma-\Lambda_{\gamma_0}}_{\HH_r}}^{\nu(d-1)/(\nu(d-1)+(r-1/2)_+)},\] and the result follows from noting that the exponent is at least 1/2 for $\nu>(r-1/2)_+/(d-1)$. \qedhere
	\end{proof}

	\subsection{Forward and inverse continuity results}\label{sec:Proofs:Stability}
	We now prove the following estimates for the maps $\gamma\mapsto \Lambda_\gamma, \Lambda_\gamma \to \gamma$. 
	
	\begin{lemma} \label{lem:ForwardStability}
		For $m\in(0,1),$ $M_0,M_1>1$ and $D'$ a domain compactly contained in $D$, let $\gamma,\gamma_0\in \Gamma_{m,D'}$ be bounded on $D$ by $M_1$ and $M_0$ respectively. Then there exist constants $C=C(r,D,D')$, $\tau=\tau(D)$ such that
		\[\norm{\Lambda_\gamma-\Lambda_{\gamma_0}}_{\HH_r}\leq C\tfrac{M_0M_1}{m^2}\norm{\gamma-\gamma_0}_{\infty}^{1/2},\] whenever $\norm{\gamma-\gamma_0}_{\infty}\leq \tau\frac{m^2}{M_0M_1}$. 
	\end{lemma}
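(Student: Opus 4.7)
The plan is to first establish the bound in the weaker $*$-norm (the $H^{1/2}(\partial D)/\CC \to H^{-1/2}(\partial D)$ operator norm) and then use \cref{lem:HHrNormEquivalentTo*Norm} to convert back to the information-theoretically relevant $\HH_r$-norm. The starting point is the classical \emph{Alessandrini identity}, which I would first verify from the weak formulation \cref{eqn:DirichletProblemWeakFormulation}: given Dirichlet data $f, g \in H^{1/2}(\partial D)/\CC$, one obtains
\[
\ip{(\Lambda_\gamma - \Lambda_{\gamma_0}) f,\, g}_{L^2(\partial D)} = \int_D (\gamma - \gamma_0)\, \Grad u_{\gamma,f}\cdot \Grad \overline{u_{\gamma_0, g}}\,\dx,
\]
by taking $v = u_{\gamma,f} - u_{\gamma_0, f}$ as the test function in the weak formulation for $u_{\gamma_0, g}$ and simplifying (using integration by parts and the boundary trace identity that defines $\Lambda_\gamma$).

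Next I would derive the standard energy estimate $\norm{\Grad u_{\gamma, f}}_{L^2(D)} \leq C(M/m)\norm{f}_{H^{1/2}(\partial D)/\CC}$ for $\gamma \in \Gamma_{m,D'}$ bounded by $M$: writing $u = \tilde f + w$ with $\tilde f$ a harmonic (or otherwise bounded) extension of $f$ and $w \in H^1_0(D)$, the variational identity $\int_D \gamma\abs{\Grad w}^2 = -\int_D \gamma \Grad \tilde f \cdot \Grad \overline w$ combined with the bounds $\gamma \geq m,\ \gamma \leq M$ and the trace theorem gives the claim. Applying this energy bound to both $u_{\gamma,f}$ and $u_{\gamma_0, g}$, followed by Cauchy--Schwarz in the Alessandrini identity, yields
\[
\abs[\big]{\ip{(\Lambda_\gamma - \Lambda_{\gamma_0}) f,\, g}_{L^2(\partial D)}}
\leq C\,\frac{M_0 M_1}{m^2}\, \norm{\gamma - \gamma_0}_\infty\, \norm{f}_{H^{1/2}/\CC}\norm{g}_{H^{1/2}/\CC},
\]
and taking the supremum over $g$ (identifying $H^{-1/2}(\partial D)$ with the dual of $H^{1/2}(\partial D)/\CC$, using that $\Lambda_\gamma f \in H^{-1/2}_\diamond$) gives
\[
\norm{\Lambda_\gamma - \Lambda_{\gamma_0}}_* \leq C\,\frac{M_0 M_1}{m^2}\, \norm{\gamma - \gamma_0}_\infty.
\]

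Finally I would feed this into \cref{eqn:HHrNormEquivalentTo*Norm} of \cref{lem:HHrNormEquivalentTo*Norm}: choosing $\tau$ sufficiently small (depending only on the constants $C, C_1$ above) ensures that under the hypothesis $\norm{\gamma-\gamma_0}_\infty \leq \tau m^2/(M_0 M_1)$ one has $\norm{\Lambda_\gamma - \Lambda_{\gamma_0}}_* \leq 1$, and then
\[
\norm{\Lambda_\gamma - \Lambda_{\gamma_0}}_{\HH_r} \leq C_1\Bigl(\tfrac{M_0+M_1}{m}\Bigr)^{1/2}\norm{\Lambda_\gamma - \Lambda_{\gamma_0}}_*^{1/2} \leq C'\Bigl(\tfrac{M_0 M_1}{m^2}\Bigr)\norm{\gamma-\gamma_0}_\infty^{1/2},
\]
using $M_0+M_1 \leq 2M_0 M_1$ (since $M_0, M_1 > 1$) and $m < 1$ to absorb the extra $m^{-1/2}$ factor into $m^{-2}$.

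The main obstacle is bookkeeping rather than conceptual: one has to verify the Alessandrini identity carefully in the complex-valued weak formulation (taking care with the conjugation convention and the quotient by $\CC$), and extract the precise dependence on $M_0, M_1, m$ in the energy estimates so that the powers match the claimed $M_0 M_1/m^2$ prefactor. The conversion step via \cref{lem:HHrNormEquivalentTo*Norm} is then mechanical, and the smallness condition on $\norm{\gamma-\gamma_0}_\infty$ is precisely what is needed to enter the regime where that lemma applies.
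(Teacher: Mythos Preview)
Your proposal is correct and follows the same overall strategy as the paper: first bound $\norm{\Lambda_\gamma-\Lambda_{\gamma_0}}_*$ with an explicit $M_0M_1/m^2$-type constant, then invoke \cref{lem:HHrNormEquivalentTo*Norm} to pass to the $\HH_r$-norm, choosing $\tau$ so that the smallness hypothesis puts you in the regime where that lemma applies.

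The only difference is in how the $*$-norm bound is obtained. You use the Alessandrini identity $\ip{(\Lambda_\gamma-\Lambda_{\gamma_0})f,g}=\int_D(\gamma-\gamma_0)\Grad u_{\gamma,f}\cdot\Grad\overline{u_{\gamma_0,g}}$ and apply the energy estimate to \emph{both} solutions $u_{\gamma,f}$ and $u_{\gamma_0,g}$. The paper instead works directly with the difference $w=u_{\gamma,f}-u_{\gamma_0,f}\in H^1_0(D)$, bounds $\norm{\partial w/\partial\nu}_{H^{-1/2}}$ by duality and the divergence theorem, and controls $\norm{\Grad w}_{L^2}$ via the PDE \cref{eqn:DifferenceOfSolutionsPDE}; only one energy estimate (for $u_{\gamma_0,f}$) is then needed. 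Your route is arguably cleaner and is the standard one in the Calder\'on literature; the paper's route is slightly more hands-on but avoids invoking the self-adjointness of $\Lambda_{\gamma_0}$ needed to derive Alessandrini's identity. Either way the constants combine exactly as you indicate, and the bookkeeping with $M_0+M_1\le 2M_0M_1$ and $m<1$ is the same in both proofs.
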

	
	\begin{lemma}\label{lem:InverseStability}
		For some $\beta>2+d/2$, some $m\in(0,1),$ $M>0$ and some domain $D'$ compactly contained in $D$, suppose $\gamma,\gamma_0\in \Gamma^\beta_{m,D'}(M)=\braces{\gamma \in \Gamma_{m, D'}: \norm{\gamma}_{H^\beta (D)}\leq M}$.
		Then there exist constants $C$ and $\tau$ depending only on $M$, $D$, $D'$, $m$, $\beta$ and $r$ such that, for a constant $\delta=\delta(d,\beta)>0$,  \[\norm{\gamma-\gamma_0}_{\infty}\leq C \abs{\log \norm{\Lambda_\gamma -\Lambda_{\gamma_0}}_{\HH_r}}^{-\delta},\] whenever $\norm{\Lambda_\gamma -\Lambda_{\gamma_0}}_{\HH_r}\leq \tau$. 
	\end{lemma}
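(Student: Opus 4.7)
The plan is to combine the classical logarithmic stability estimate of the Calder\'on problem, which is formulated in terms of the operator norm $\norm{\cdot}_*$, with the norm comparison \cref{deep} from \cref{lem:HHrNormEquivalentTo*Norm} in order to convert that estimate into one involving the information-theoretically relevant Hilbert--Schmidt norm $\norm{\cdot}_{\HH_r}$. No new PDE analysis is really needed; the role of the lemma is essentially to show that passing from $\norm{\cdot}_*$ to $\norm{\cdot}_{\HH_r}$ only costs a constant factor inside the logarithm.

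For the first ingredient I would invoke Alessandrini's log-stability estimate \cite{Alessandrini1988} for $d\ge 3$ and the Novikov--Santacesaria refinement \cite{NS10} for $d=2$. Under the standing hypotheses $\beta>2+d/2$, $\norm{\gamma}_{H^\beta(D)},\norm{\gamma_0}_{H^\beta(D)}\le M$, and $\gamma,\gamma_0\ge m$ (plus the fact that $\gamma,\gamma_0\equiv 1$ outside $D'$, so the usual boundary regularity assumptions are automatic), these results furnish an estimate
\[\norm{\gamma-\gamma_0}_\infty \le C_0 \abs{\log\norm{\Lambda_\gamma-\Lambda_{\gamma_0}}_*}^{-\delta},\]
for some $\delta=\delta(d,\beta)>0$ and $C_0=C_0(M,m,\beta,D,D')$, valid whenever $\norm{\Lambda_\gamma-\Lambda_{\gamma_0}}_*$ is below some threshold $\tau_0=\tau_0(M,m,\beta,D,D')$. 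The Sobolev embedding (since $\beta>d/2$) supplies a uniform $L^\infty$-bound $\norm{\gamma}_\infty,\norm{\gamma_0}_\infty\le M'=M'(M,\beta,D)$ needed to apply \cref{lem:HHrNormEquivalentTo*Norm}.

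For the second ingredient, \cref{deep} of \cref{lem:HHrNormEquivalentTo*Norm} then gives, provided $\norm{\Lambda_\gamma-\Lambda_{\gamma_0}}_{\HH_r}\le 1$,
\[\norm{\Lambda_\gamma-\Lambda_{\gamma_0}}_* \le C_1 \norm{\Lambda_\gamma-\Lambda_{\gamma_0}}_{\HH_r}^{1/2},\]
for a constant $C_1=C_1(M,m,r,D)$. Taking logarithms,
\[\abs{\log \norm{\Lambda_\gamma-\Lambda_{\gamma_0}}_*} \ge \tfrac{1}{2}\abs{\log \norm{\Lambda_\gamma-\Lambda_{\gamma_0}}_{\HH_r}} - \log C_1,\]
so that, restricting to $\norm{\Lambda_\gamma-\Lambda_{\gamma_0}}_{\HH_r}\le \tau$ for some sufficiently small $\tau=\tau(M,m,\beta,r,D,D')$ (small enough that the right-hand side is at least, say, a quarter of $\abs{\log\norm{\Lambda_\gamma-\Lambda_{\gamma_0}}_{\HH_r}}$ and simultaneously $\norm{\Lambda_\gamma-\Lambda_{\gamma_0}}_*\le \tau_0$), one combines the two displays to conclude
\[\norm{\gamma-\gamma_0}_\infty \le C\abs{\log \norm{\Lambda_\gamma-\Lambda_{\gamma_0}}_{\HH_r}}^{-\delta},\]
with the same exponent $\delta=\delta(d,\beta)$ as above and $C$ depending on $M,m,\beta,r,D,D'$.

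The only delicate point is the correct citation and bookkeeping for the external PDE input: the various references formulate Alessandrini-type stability under slightly different smoothness and geometric hypotheses, and one must check that the version invoked does apply uniformly over $\Gamma^\beta_{m,D'}(M)$ with a constant that depends only on $(M,m,\beta,D,D')$. Once that is done, the rest of the argument is a clean combination of the known logarithmic stability with the Hilbert--Schmidt-to-operator norm bound already established.
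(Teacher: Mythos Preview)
Your proposal is correct and follows essentially the same approach as the paper: invoke the Alessandrini (resp.\ Novikov--Santacesaria for $d=2$) logarithmic stability estimate in the $\norm{\cdot}_*$ norm, use the Sobolev embedding to secure the required uniform $L^\infty$ bound on $\gamma,\gamma_0$, and then apply \cref{deep} from \cref{lem:HHrNormEquivalentTo*Norm} to convert to the $\HH_r$ norm, absorbing the resulting factor of $\tfrac12$ and additive $\log C_1$ into the constants by taking $\tau$ small enough. The paper's proof is written slightly more compactly via a monotone modulus $\omega$, but the content is identical.
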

	The explicit form of the dependence of the constant in \cref{lem:ForwardStability} on $M_1$ and $M_0$ will be convenient in the proof of \cref{lem:LinftySmallBalls}.

	\begin{proof}[Proof of \cref{lem:ForwardStability}]
		We initially show, for some $C=C(D)$, that
		\begin{equation}\label{eqn:OperatorNormTildeLambdaGamma}
		\norm{\Lambda_\gamma-\Lambda_{\gamma_0}}_*\leq CM_0\tfrac{m+ M_1}{m^2} \norm{\gamma-\gamma_0}_{\infty}.
		\end{equation}
		The result then follows from \cref{lem:HHrNormEquivalentTo*Norm}, noting that $(M_0+M_1)M_0(m+M_1)/m^3\leq 4M_0^2M_1^2/m^4$.
		
		For $\gamma,\gamma_0$ as given, let $f\in H^{1/2}(\partial D)/\CC$, and recall we write $u_{\gamma,f}$ for the unique solution in $\Hh_{-1/2} \equiv H^1(D)/\CC$ to the Dirichlet problem on $D$ with conductivity $\gamma$ and boundary data $f$, whose existence is guaranteed by \cref{lem:DirichletProblemHasUniqueSolution}, and similarly for $u_{\gamma_0,f}$. The equivalence class of functions $u_{\gamma,f}-u_{\gamma_0,f}$ has a representative $w\in H_0^1(D)$, which is easily seen to (weakly) solve the PDE
		\begin{equation}\label{eqn:DifferenceOfSolutionsPDE}
		\begin{aligned}
		\Div (\gamma \Grad w) 
		&= \Div \brackets[\big]{(\gamma_0-\gamma)\Grad u_{\gamma_0,f}} \quad &&\text{ in }D, \\
		w&=0 \quad &&\text { on } \partial D.
		\end{aligned}
		\end{equation}
		We have the dual representation (see remark (ii) in \cref{sec:LaplaceBeltramiEigenfunctionsAndSobolevSpaces})  \[\norm[\Big]{\pd{w}{\nu}}_{H^{-1/2}(\partial D)}=\sup\braces[\Big]{ \abs[\big]{\ip[\Big]{ \pd{w}{\nu},v}_{L^2(\partial D)}} : v\in H^{1/2}(\partial D), \norm{v}_{H^{1/2}(\partial D)}=1}.\] For $v\in H^{1/2}(\partial D)$, by a standard trace theorem (e.g.\ Chapter I, Theorem 9.4 of \cite{Lions1972}) there exists $V\in H^1(D)$ such that $V|_{\partial D}=v$ and $\norm{V}_{H^1(D)}\leq C\norm{v}_{H^{1/2}(\partial D)}$ for a constant $C=C(D)$. Repeatedly applying the divergence theorem  (recalling that $\gamma=\gamma_0=1$ on $\partial D$) and the Cauchy--Schwarz inequality, and using \cref{eqn:DifferenceOfSolutionsPDE}, we deduce
		\begin{align*}
		\abs[\Big]{\int_{\partial D} \overline{v} \pd{w}{\nu}  } 
		&=\abs[\Big]{\int_D \overline{V}\Div (\gamma \Grad w)  + \int_D \gamma \Grad \overline{V} \cdot \Grad w } \\
		&\leq \abs[\Big]{\int_D \overline{V} \Div \brackets[\big]{(\gamma_0-\gamma)\Grad u_{\gamma_0,f}} }+\norm{\gamma}_{\infty}\norm{V}_{H^1(D)}\norm{\Grad w}_{L^2(D)} \\
		& \leq \abs[\Big]{\int_D (\gamma_0-\gamma) \Grad \overline{V} \cdot \Grad u_{\gamma_0,f} }+C\norm{\gamma}_{\infty}\norm{v}_{H^{1/2}(\partial D)}\norm{\Grad w}_{L^2(D)}\\
		&\leq C\norm{v}_{H^{1/2}(\partial D)} \brackets[\big]{\norm{\gamma_0-\gamma}_{\infty} \norm{\Grad u_{\gamma_0,f}}_{L^2(D)} + M_1\norm{\Grad w}_{L^2(D)}},
		\end{align*}
		hence \begin{equation}\label{eqn:NormPdWNuH-1/2} \norm[\Big]{\pd{w}{\nu}}_{H^{-1/2}(\partial D)}\leq C\brackets[\big]{\norm{\gamma_0-\gamma}_{\infty} \norm{\Grad u_{\gamma_0,f}}_{L^2(D)}+ M_1 \norm{\Grad w}_{L^2(D)}}.\end{equation} 
		A weak solution $w$ to \cref{eqn:DifferenceOfSolutionsPDE} by definition satisfies, for any $v\in H_0^1(D)$,
		\[\int_D \gamma \Grad w\cdot \Grad \overline{v}=\int_D (\gamma_0-\gamma)\Grad u_{\gamma_0,f}\cdot \Grad \overline{v}. \]
		In particular this applies with $v=w$, hence, since $\gamma\geq m$ on $D$, we apply the Cauchy--Schwarz inequality to deduce
		\[ m \norm{\Grad w}_{L^2(D)}^2 \leq \norm{\gamma_0-\gamma}_{\infty} \norm{\Grad u_{\gamma_0,f}}_{L^2(D)} \norm{\Grad w}_{L^2(D)}, \] which, returning to \cref{eqn:NormPdWNuH-1/2}, shows that
		\[ \norm[\Big]{\pd{w}{\nu}}_{H^{-1/2}(\partial D)}\leq C\norm{\Grad u_{\gamma_0,f}}_{L^2(D)} \norm{\gamma_0-\gamma}_{\infty} \brackets[\big]{1+\tfrac{M_1}{m}}.\]
		Applying the trace result Theorem 9.4 in \cite{Lions1972} Chapter I now to each representative of the equivalence class $f\in H^{1/2}(D)/\CC$ and optimising, there exists $F\in H^1(D)/\CC$ such that $F|_{\partial D}=f$ and $\norm{F}_{H^1(D)/\CC}\leq C\norm{f}_{H^{1/2}(\partial D)/\CC}$ for a constant $C=C(D)$. By definition of a weak solution to \cref{eqn:DirichletProblem} we have
		\[\int_D \gamma_0 \Grad u_{\gamma_0,f} \cdot \Grad \overline{(u_{\gamma_0,f}-F)}  = 0,\]
		and again applying the Cauchy--Schwarz inequality we deduce 
		\begin{equation}\label{eqn:u0H1Bound}
		\norm{\Grad u_{\gamma_0,f}}_{L^2(D)}\leq C\tfrac{M_0}{m}\norm{f}_{H^{1/2}(\partial D)/\CC}.
		\end{equation} 
		Overall we have shown \[\norm[\big]{(\Lambda_\gamma-\Lambda_{\gamma_0}) f}_{H^{-1/2}(\partial D)}\equiv \norm[\Big]{\pd{w}{\nu}}_{H^{-1/2}(\partial D)} \leq C\brackets[\big]{1+\tfrac{M_1}{m}}\tfrac{M_0}{m} \norm{\gamma-\gamma_0}_{\infty} \norm{f}_{H^{1/2}(\partial D)/\CC}.\] Taking the supremum over all $f$ with $H^{1/2}(\partial D)/\CC$ norm equal to 1, \cref{eqn:OperatorNormTildeLambdaGamma} follows. \qedhere
	\end{proof}
	
	\begin{proof}[Proof of \cref{lem:InverseStability}]
		When $d \ge 3$, Theorem 1 in Alessandrini \cite{Alessandrini1988} states that there exist constants $\delta=\delta(d,\beta)$ and $C=C(M,m,D, D', \beta)$  such that there is a (monotone) function $\omega$ satisfying 
		\begin{equation} \label{eqn:ModulusOfContinuityInverseMap} \norm{\gamma-\gamma_0}_{\infty} \leq C \omega(\norm{\Lambda_\gamma-\Lambda_{\gamma_0}}_*),\quad  \omega(t)\leq \log(1/t)^{-\delta}\text{ for }t<e^{-1}.
		\end{equation} 
		Likewise, when $d=2$, we can use Theorem 1.1 in Novikov \& Santacesaria \cite{NS10} in conjunction with the usual reduction of the Calder\'on problem to a suitable Schr\"odinger equation (e.g., see Lemma 4.8 and the proof of Theorem 4.1 in \cite{S08}) to show that \cref{eqn:ModulusOfContinuityInverseMap} still holds in this case. [In \cite{NS10}, an a priori bound for $\norm{\gamma}_{C^2}$ is assumed, which is derived here from the bound $M$ on $\norm{\gamma}_{H^\beta}$ using a Sobolev embedding.]
		
		To proceed, we appeal to \cref{lem:HHrNormEquivalentTo*Norm}, noting that $M$ upper bounds  $\norm{\gamma}_{\infty}$ and $\norm{\gamma_0}_{\infty}$  (up to a multiplicative constant) by a Sobolev embedding. Thus, for a constant $C'$ depending on $M,$ $m$, $D,$ $D'$, $\beta$ and $r$ we have
		\begin{align*}
		\omega(\norm{\Lambda_\gamma-\Lambda_{\gamma_0}}_*)&\leq \omega (C'\norm{\Lambda_\gamma-\Lambda_{\gamma_0}}_{\HH_r}^{1/2}) \\
		&\leq (\tfrac{1}{4} \log\brackets{\norm{\Lambda_\gamma-\Lambda_{\gamma_0}}_{\HH_r}^{-1}})^{-\delta},
		\end{align*}
		provided $\norm{\Lambda_\gamma-\Lambda_{\gamma_0}}_{\HH_r}< \min(e^{-2}(C')^{-2},(C')^{-4},1).$ The result follows. \qedhere
	\end{proof}

	\subsection{Tests and prior support properties}\label{sec:Proofs:TestsAndSmallBalls}
	In this section we prove two main auxiliary results. First, we prove the existence of certain statistical test functions required in the contraction theorem given in \cref{sec:Proofs:PosteriorContraction}. Instead of using robust `Hellinger-distance'-based testing as in \cite{vdVvZ08, Monard2019}, it is more convenient in the present setting (in part to deal with necessary boundedness restrictions on $\gamma$) to deduce the existence of tests from the existence of certain estimators with sufficiently good concentration properties (following ideas  in \cite{GN11}).  
	
	Recall $\Gamma_{m_1,D_1}$ is a superset of $\Gamma_{m_0,D_0}$ on which the prior on $\gamma=\Phi\circ \theta$ concentrates its mass. 
	
	\begin{lemma}\label{lem:ExistenceOfTests}
		Let $\gamma_0\in\Gamma_{m_0,D_0}$ be bounded by $M_0$ on $D$. Let $\eta_\eps>0$ satisfy $\eta_\eps\eps^{-(1-a)}\to \infty$ as $\eps\to 0$ for some $0<a<1$. For any $\kappa>0$ and $M_1>1 $, there exists a constant $C=C(\kappa,m_1,D, D_1,M_1,M_0)$ and tests (i.e. $\braces{0,1}$-valued measurable functions of the data) $\psi=\psi_\eps(Y)$ with $Y \sim P_\eps^\gamma$ from \cref{eqn:Model:HilbertSpaceMeasurements} such that for all $\varepsilon$ small enough
		\[\max \brackets{ E^{\gamma_0}_\eps \psi, \sup\braces{E^{\gamma}_\eps \sqbrackets{1-\psi} : \gamma\in \Gamma_{m_1,D_1},\norm{\gamma}_{\infty}\leq M_1, \norm{\Lambda_\gamma-\Lambda_{\gamma_0}}_{\HH_r}\geq C \eta_\eps} } \leq e^{-\kappa(\eta_\eps/\eps)^2}.\]
	\end{lemma}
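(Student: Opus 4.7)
The natural route, following the hint in the preceding paragraph, is to construct the test via a plug‑in estimator of the operator $\Lambda_\gamma$ whose error can be controlled exponentially in the $\HH_r$‑norm. Concretely, for $J=K$ to be chosen below set
\[
\hat{\Lambda}=\hat{\Lambda}_\eps(Y)=\Lambda_1+\pi_{JJ} Y,\qquad \psi=\psi_\eps(Y)=\II\{\,\|\hat\Lambda-\Lambda_{\gamma_0}\|_{\HH_r}>T\,\},
\]
for a threshold $T$ of order $\eta_\eps$. I would then analyse the two types of error by means of the standard bias/variance decomposition for $\pi_{JJ}Y$: under $P^\gamma_\eps$ one has $\hat\Lambda-\Lambda_{\gamma_0}=(\Lambda_\gamma-\Lambda_{\gamma_0})-(\tilde\Lambda_\gamma-\pi_{JJ}\tilde\Lambda_\gamma)+\eps\,\pi_{JJ}\WW$.

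The deterministic bias $\|\tilde\Lambda_\gamma-\pi_{JJ}\tilde\Lambda_\gamma\|_{\HH_r}$ is handled by \cref{lem:ProjectionErrorDecaysAsMinJK^-nu}, which for any preassigned $\nu>0$ gives a bound of the order $C(M_1/m_1)J^{-\nu}$ (and analogously for $\gamma_0$ with $M_0/m_0$). The stochastic part $\eps\,\pi_{JJ}\WW$ has, by definition of the isonormal process $\WW$ on $\HH_r$, squared $\HH_r$‑norm equal to $\eps^2$ times a $\chi^2_{J^2}$ random variable; its $\HH_r$‑norm is a $1$-Lipschitz function of a standard Gaussian vector, so Borell--Sudakov--Tsirelson yields
\[
P^\gamma_\eps\brackets[\big]{\|\eps\pi_{JJ}\WW\|_{\HH_r}\ge \eps\sqrt{J^2}+\eps t}\le e^{-t^2/2}\qquad\forall t>0,\;\forall\gamma.
\]

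With these two ingredients in hand, I would pick $J=J_\eps$ integer‑valued so that simultaneously (i) the bias $J^{-\nu}\lesssim \eta_\eps$ and (ii) $\eps\,J\ll \eta_\eps$, which is exactly the window $\eta_\eps^{-1/\nu}\lesssim J\lesssim \eta_\eps/\eps$. The hypothesis $\eta_\eps\eps^{-(1-a)}\to\infty$ guarantees this window is non‑empty for all sufficiently small $\eps$ provided $\nu$ is chosen large enough that $(1-a)(1+1/\nu)<1$, i.e.\ $\nu>(1-a)/a$; this is permissible because $\nu$ is free in \cref{lem:ProjectionErrorDecaysAsMinJK^-nu}. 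Then pick $T=C_1\eta_\eps$ with $C_1$ large relative to the bias constants: under $P^{\gamma_0}_\eps$ the bias is $\le T/3$ deterministically, so $\psi=1$ requires $\|\eps\pi_{JJ}\WW\|_{\HH_r}\ge 2T/3$, and since $\eps J\le T/6$ (say), the Gaussian tail gives $E^{\gamma_0}_\eps\psi\le \exp(-c(T/\eps)^2)=\exp(-c(\eta_\eps/\eps)^2)$. Under any alternative $\gamma$ with $\|\Lambda_\gamma-\Lambda_{\gamma_0}\|_{\HH_r}\ge C\eta_\eps$, taking $C$ large enough (depending on $C_1$, $M_0/m_0$, $M_1/m_1$) ensures by the reverse triangle inequality $\|\hat\Lambda-\Lambda_{\gamma_0}\|_{\HH_r}\ge 2T$ up to the noise term, so again $E^\gamma_\eps[1-\psi]$ is bounded by the same Gaussian tail.

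The only genuinely delicate point is matching the rate of the Gaussian tail $\exp(-c(\eta_\eps/\eps)^2)$ to the required $\exp(-\kappa(\eta_\eps/\eps)^2)$: the constants $c,c'$ coming out of Borell--TIS are explicit, so I would simply rescale $C_1$ upward to absorb any given $\kappa$. The condition $\eta_\eps\eps^{-(1-a)}\to\infty$ enters essentially only to ensure that the projection dimension $J$ can be chosen to make both the bias and the expected noise $\eps J$ negligible relative to $\eta_\eps$; this is the main obstacle to overcome and is exactly why the freedom in $\nu$ from \cref{lem:ProjectionErrorDecaysAsMinJK^-nu} is exploited. Measurability of $\psi$ is clear since $Y\mapsto\pi_{JJ}Y$ is a continuous (in fact linear) $\HH_r$‑valued function of the data.
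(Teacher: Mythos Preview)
Your proposal is correct and follows essentially the same approach as the paper: build a projected estimator $\hat\Lambda=\pi_{JJ}Y$ (the paper's version, which differs from yours only by the irrelevant additive constant $\Lambda_1$), control the deterministic bias via \cref{lem:ProjectionErrorDecaysAsMinJK^-nu} with $\nu>(1-a)/a$, control the stochastic part via Gaussian concentration, and set $J=\lfloor\eta_\eps/\eps\rfloor$. The only cosmetic difference is that the paper bounds $\|\eps\pi_{JJ}\WW\|_{\HH_r}^2=\eps^2\sum_{j,k\le J}g_{jk}^2$ using a chi-squared tail inequality rather than Borell--TIS; both give the required $e^{-\kappa(\eta_\eps/\eps)^2}$ bound after enlarging the threshold constant.
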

	
	\begin{proof}
		We prove the existence of an estimator $\hat{\Lambda}$ satisfying that for any $\kappa>0,$ there exists a constant $C'=C'(\kappa,m_1,D_1, M_0, M_1,D)$ such that for all $\eps$ small enough,
		\begin{equation}\label{eqn:ConcentrationOfAnEstimator} 
		\sup\braces{ P^\gamma_\eps (\norm{\hat{\Lambda}-\tilde{\Lambda}_\gamma}_{\HH_r}>C'\eta_\eps) : \gamma \in \Gamma_{m_1,D_1},\norm{\gamma}_{\infty}\leq \max(M_0,M_1)}\leq e^{-\kappa(\eta_\eps/\eps)^2}.
		\end{equation} Then, setting $C=2C'$ and  $\psi_\eps(Y)=\II\braces{\norm{\hat{\Lambda}-\tilde{\Lambda}_{\gamma_0}}_{\HH_r}>\tfrac{1}{2}C\eta_\eps},$ the result follows from an application of the triangle inequality (see, e.g., the proof of Proposition 6.2.2 in \cite{Gine2016}).
		
		Define an estimator $\hat{\Lambda}$ by $\hat{\Lambda}=\sum_{j,k\leq J} \hat{\Lambda}_{jk}b_{jk}^{(r)}$, where $J=J_\eps=\floor{\eta_\eps/\eps}$ and  
		\begin{equation}\label{eqn:hatThetajk} \hat{\Lambda}_{jk}=\ip{Y,b_{jk}^{(r)}}_{\HH_r}=\ip{\tilde{\Lambda}_\gamma \phi_j^{(r)},\phi_k^{(0)}}_{L^2(\partial D)} + \eps g_{jk},~Y \sim P^\gamma_\eps,
		\end{equation} 
		where we note $g_{jk}=\ip{\WW,b_{jk}^{(r)}}_{\HH_r}\iidsim N(0,1)$. 
		Then we have the bias-variance decomposition
		\begin{equation}\label{eqn:BiasVarianceDecompositionLambdaHat}
		P^\gamma_\eps (\norm{\hat{\Lambda}-\tilde{\Lambda}_\gamma}_{\HH_r}>C'\eta_\eps) \leq \II\braces{ \norm{\tilde{\Lambda}_\gamma-\pi_{JJ}\tilde{\Lambda}_\gamma}_{\HH_r}>\tfrac{1}{2}C'\eta_\eps} + P^\gamma_\eps(\norm{\hat{\Lambda}-\pi_{JJ} \tilde{\Lambda}_\gamma}_{\HH_r}>\tfrac{1}{2}C'\eta_\eps).
		\end{equation}
		Recall, by \cref{lem:ProjectionErrorDecaysAsMinJK^-nu}, for any $\nu>0$ there is a constant $C_1=C_1(\nu,r,M_0,M_1,m_1,D_1,D)$ such that
		\begin{equation} \label{eqn:BiasIsSmall}
		\norm{\tilde{\Lambda}_\gamma-\pi_{JJ}\tilde{\Lambda}_\gamma}_{\HH_r} \leq C_1 J^{-\nu},
		\end{equation}
		hence the indicator in \cref{eqn:BiasVarianceDecompositionLambdaHat} is bounded by $\II\braces{C_1 J^{-\nu}>\tfrac{1}{2}C'\eta_\eps}$.
		Choosing $\nu>(1-a)/a$, 
		one finds that the assumption $\eta_\eps\eps^{-(1-a)}\to \infty$ ensures this term vanishes for $\eps$ small enough.
		
		For the variance term in \cref{eqn:BiasVarianceDecompositionLambdaHat}, observe that by Parseval's identity \begin{equation*}
		\norm{\hat{\Lambda}-\pi_{JJ} \tilde{\Lambda}_\gamma}_{\HH_r}^2 = \sum_{j,k\leq J} (\hat{\Lambda}_{jk}-\ip{\tilde{\Lambda}_\gamma\phi_j^{(r)},\phi_k^{(0)}}_{L^2(\partial D)})^2 = \eps^2\sum_{j,k \le J} g_{jk}^2.
		\end{equation*}
		One now applies a standard tail inequality (e.g., Theorem 3.1.9 in \cite{Gine2016}) to the effect that
		\begin{equation}\label{eqn:chi2kConcentration}
		\Pr\Big(\sum_{j,k \le J}g_{jk}^2\geq J^2+2J\sqrt{x}+2x\Big)\leq e^{-x}.
		\end{equation}
		For a constant $\kappa>0$, taking $x=\kappa(\eta_\eps/\eps)^2$, and for our choice $J =\floor{\eta_\eps/\eps}$, we see that for $C'$ large enough depending only on $\kappa$ we have
		\[
		P^\gamma_\eps (\norm{\hat{\Lambda}-\pi_{JJ} \tilde{\Lambda}_\gamma}_{\HH_r}^2>\frac{1}{2}C'\eta_\eps^2)\leq e^{-\kappa(\eta_\eps/\eps)^2},
		\] hence the result. \qedhere
	\end{proof}

	To proceed define $K(p,q)=E_{X\sim p}\log \frac{p}{q}(X)$ to be the Kullback--Leibler divergence between distributions with densities $p$ and $q$, and recall the definition of the probability densities $p_\eps^\gamma$ from \cref{eqn:LogLikelihood}. Also denote by $\Var_\gamma$ the variance operator associated to the probability measure $P^\gamma_\eps$. The following is then a standard result for a white noise model on a Hilbert space. 
	\begin{lemma}\label{lem:KLdistanceIsHHdistance}
		Let $\gamma_0,\gamma_1\in\Gamma_{m_1,D_1}$. Then $K(p^{\gamma_0}_\eps,p^{\gamma_1}_\eps)=\frac{1}{2}\eps^{-2}\norm{\Lambda_{\gamma_0}-\Lambda_{\gamma_1}}_{\HH_r}^2$, and $\Var_{\gamma_0}\brackets[\big]{\log\frac{p^{\gamma_0}_\eps}{p^{\gamma_1}_\eps}}=\eps^{-2}\norm{\Lambda_{\gamma_0}-\Lambda_{\gamma_1}}_{\HH_r}^2$.
	\end{lemma}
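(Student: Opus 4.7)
The plan is to start from the explicit log-likelihood formula \cref{eqn:LogLikelihood} and exploit that the noise $\WW$ is an isonormal Gaussian process on $\HH_r$, so that $\ip{\WW, T}_{\HH_r} \sim N(0, \norm{T}_{\HH_r}^2)$ for every $T \in \HH_r$. Subtracting $\ell(\gamma_1)$ from $\ell(\gamma_0)$ gives
\[
\log \frac{p^{\gamma_0}_\eps}{p^{\gamma_1}_\eps}(Y) = \frac{1}{\eps^2}\ip{Y, \tilde{\Lambda}_{\gamma_0}-\tilde{\Lambda}_{\gamma_1}}_{\HH_r} - \frac{1}{2\eps^2}\brackets[\big]{\norm{\tilde{\Lambda}_{\gamma_0}}_{\HH_r}^2 - \norm{\tilde{\Lambda}_{\gamma_1}}_{\HH_r}^2},
\]
and substituting $Y = \tilde{\Lambda}_{\gamma_0} + \eps\WW$ (which holds under $P^{\gamma_0}_\eps$) decomposes the right-hand side as a deterministic part plus the mean-zero linear functional $\eps^{-1}\ip{\WW, \tilde{\Lambda}_{\gamma_0}-\tilde{\Lambda}_{\gamma_1}}_{\HH_r}$.

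Taking expectation under $P^{\gamma_0}_\eps$ then annihilates the noise contribution, leaving
\[
K(p^{\gamma_0}_\eps, p^{\gamma_1}_\eps) = \frac{1}{\eps^2}\ip{\tilde{\Lambda}_{\gamma_0}, \tilde{\Lambda}_{\gamma_0}-\tilde{\Lambda}_{\gamma_1}}_{\HH_r} - \frac{1}{2\eps^2}\brackets[\big]{\norm{\tilde{\Lambda}_{\gamma_0}}_{\HH_r}^2 - \norm{\tilde{\Lambda}_{\gamma_1}}_{\HH_r}^2},
\]
which collapses to $\tfrac{1}{2}\eps^{-2}\norm{\tilde{\Lambda}_{\gamma_0}-\tilde{\Lambda}_{\gamma_1}}_{\HH_r}^2$ via the polarisation identity $\ip{a, a-b} - \tfrac{1}{2}(\norm{a}^2 - \norm{b}^2) = \tfrac{1}{2}\norm{a-b}^2$. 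Since $\tilde{\Lambda}_{\gamma_i} = \Lambda_{\gamma_i} - \Lambda_1$, the deterministic offset $\Lambda_1$ cancels and the quantity equals $\tfrac{1}{2}\eps^{-2}\norm{\Lambda_{\gamma_0}-\Lambda_{\gamma_1}}_{\HH_r}^2$, establishing the first identity.

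For the variance, only the linear-in-$\WW$ term of the log-likelihood ratio contributes, and the isonormal property of $\WW$ immediately gives
\[
\Var_{\gamma_0}\brackets[\Big]{\log \frac{p^{\gamma_0}_\eps}{p^{\gamma_1}_\eps}} = \eps^{-2}\norm{\tilde{\Lambda}_{\gamma_0}-\tilde{\Lambda}_{\gamma_1}}_{\HH_r}^2 = \eps^{-2}\norm{\Lambda_{\gamma_0}-\Lambda_{\gamma_1}}_{\HH_r}^2,
\]
again using the cancellation $\tilde{\Lambda}_{\gamma_0}-\tilde{\Lambda}_{\gamma_1} = \Lambda_{\gamma_0}-\Lambda_{\gamma_1}$. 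There is no real obstacle here; the only point worth flagging is the measurability and well-definedness of $\ip{Y,T}_{\HH_r}$ as a random variable when $Y$ contains the formal noise $\eps\WW$, which is already handled by the intrinsic definition of $\WW(T)$ in \cref{eqn:ContinuousModelDistributionalSense} together with $\tilde{\Lambda}_\gamma \in \HH_r$ from \cref{lem:tildeLambdaIsHilbertSchmidt}.
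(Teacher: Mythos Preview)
Your proposal is correct and follows essentially the same route as the paper: subtract the two log-likelihoods from \cref{eqn:LogLikelihood}, substitute $Y=\tilde\Lambda_{\gamma_0}+\eps\WW$ under $P^{\gamma_0}_\eps$, and read off the mean and variance of the resulting Gaussian random variable $\tfrac{1}{2}\eps^{-2}\norm{\tilde\Lambda_{\gamma_0}-\tilde\Lambda_{\gamma_1}}_{\HH_r}^2+\eps^{-1}\ip{\WW,\tilde\Lambda_{\gamma_0}-\tilde\Lambda_{\gamma_1}}_{\HH_r}$, then cancel the $\Lambda_1$ offset. The paper does the algebraic simplification in one line rather than invoking polarisation explicitly, but the content is identical.
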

	\begin{proof}
		Using the explicit formula \cref{eqn:LogLikelihood} for the log-likelihoods, we see that under $\gamma_0$, \begin{align*}\ell(\gamma_0)-\ell(\gamma_1)&=\eps^{-2} \ip{Y,\tilde{\Lambda}_{\gamma_0}-\tilde{\Lambda}_{\gamma_1}}_{\HH_r} - \tfrac{1}{2}\eps^{-2} \norm{\tilde{\Lambda}_{\gamma_0}}_{\HH_r}^2+\tfrac{1}{2}\eps^{-2} \norm{\tilde{\Lambda}_{\gamma_1}}_{\HH_r}^2 \\
		&= \tfrac{1}{2}\eps^{-2} \norm{\tilde{\Lambda}_{\gamma_0}-\tilde{\Lambda}_{\gamma_1}}_{\HH_r}^2 + \eps^{-1} \ip{\WW,\tilde{\Lambda}_{\gamma_0}-\tilde{\Lambda}_{\gamma_1}}_{\HH_r},
		\end{align*}
		which is normally distributed with mean $\tfrac{1}{2}\eps^{-2}\norm{\tilde{\Lambda}_{\gamma_0}-\tilde{\Lambda}_{\gamma_1}}_{\HH_r}^2$ and variance $\eps^{-2}\norm{\tilde{\Lambda}_{\gamma_0}-\tilde{\Lambda}_{\gamma_1}}_{\HH_r}^2$. Noting that $\tilde{\Lambda}_{\gamma_0}-\tilde{\Lambda}_{\gamma_1}=\Lambda_{\gamma_0}-\Lambda_{\gamma_1}$, we deduce the result.\qedhere
	\end{proof}
	We define `balls' $B_{KL}^\eps(\eta)$ around the true parameter $\theta_0=\Phi^{-1}\circ \gamma_0$ by 
	\begin{equation}\label{eqn:B_KLdefinition}
	B_{KL}^\eps(\eta) = \braces[\big]{\theta\in C_u(D) : K(p^{\gamma_0}_\eps,p^{\Phi\circ\theta}_\eps)\leq (\eta/\eps)^2, \Var_{\gamma_0}(\log(p^{\gamma_0}_\eps/p^{\Phi\circ \theta}_\eps))\leq (\eta/\eps)^2}.
	\end{equation}
	Then the following is an immediate consequence of \cref{lem:KLdistanceIsHHdistance}.
	\begin{corollary}\label{lem:HHNormBallsContainedInKLBalls}
		For any $\eta>0$, $\braces{\theta\in C_u(D) : \norm{\Lambda_{\Phi\circ \theta}-\Lambda_{\gamma_0}}_{\HH_r} \leq \eta}\subseteq B_{KL}^\eps(\eta)$.
	\end{corollary}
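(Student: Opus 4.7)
The plan is to observe that this corollary is a direct consequence of the explicit formulas provided by \cref{lem:KLdistanceIsHHdistance}, so the entire argument amounts to a substitution together with the inequality $\tfrac12 \le 1$.

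Concretely, I would fix $\theta \in C_u(D)$ with $\|\Lambda_{\Phi\circ\theta} - \Lambda_{\gamma_0}\|_{\HH_r} \le \eta$ and set $\gamma_1 = \Phi\circ\theta$, noting that since $\Phi : \RR \to (m_1,\infty)$, one has $\gamma_1 \in \Gamma_{m_1,D_1}$ (as required for the log-likelihood $p^{\gamma_1}_\eps$ in \cref{eqn:LogLikelihood} to be well-defined and for \cref{lem:KLdistanceIsHHdistance} to apply). Then I would invoke \cref{lem:KLdistanceIsHHdistance} to write
\[
K(p^{\gamma_0}_\eps, p^{\gamma_1}_\eps) = \tfrac{1}{2}\eps^{-2}\|\Lambda_{\gamma_0} - \Lambda_{\gamma_1}\|_{\HH_r}^2 \le \tfrac{1}{2}(\eta/\eps)^2 \le (\eta/\eps)^2,
\]
and similarly
\[
\Var_{\gamma_0}\!\Big(\log\tfrac{p^{\gamma_0}_\eps}{p^{\gamma_1}_\eps}\Big) = \eps^{-2}\|\Lambda_{\gamma_0} - \Lambda_{\gamma_1}\|_{\HH_r}^2 \le (\eta/\eps)^2.
\]
Comparing with the definition \cref{eqn:B_KLdefinition} of $B_{KL}^\eps(\eta)$ shows $\theta \in B_{KL}^\eps(\eta)$, giving the claimed inclusion.

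There is no real obstacle here beyond checking that \cref{lem:KLdistanceIsHHdistance} genuinely applies to $\gamma_1 = \Phi\circ\theta$ (which follows from the range of $\Phi$) and that both defining inequalities in \cref{eqn:B_KLdefinition} are controlled by the single quantity $\eps^{-2}\|\Lambda_{\gamma_0} - \Lambda_{\gamma_1}\|_{\HH_r}^2$, which is the whole point of \cref{lem:KLdistanceIsHHdistance}. The step where one passes from $\tfrac12(\eta/\eps)^2$ to $(\eta/\eps)^2$ is harmless; indeed the corollary would remain true with $\eta$ replaced by $\eta/\sqrt{2}$ in the right-hand set, but the weaker form stated suffices for downstream use.
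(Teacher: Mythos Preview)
Your proposal is correct and matches the paper's approach exactly: the paper simply states that the corollary is an immediate consequence of \cref{lem:KLdistanceIsHHdistance}, and your substitution argument is precisely that consequence made explicit. One small caveat: the claim $\gamma_1=\Phi\circ\theta\in\Gamma_{m_1,D_1}$ requires not only $\Phi>m_1$ but also $\theta\equiv 0$ on $D\setminus D_1$ (so that $\Phi\circ\theta=1$ there); this holds for $\theta$ in the support of the prior \cref{eqn:tampering} via the cutoff $\zeta$, which is the only case used downstream, though the corollary as literally stated for all $\theta\in C_u(D)$ glosses over this point.
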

	
	With the preceding preparations, we can now prove the following support result for the prior $\Pi$ from \cref{thm:ContractionForTheta}, using a result of Li \& Linde \cite{Li1999}.
	\begin{lemma} \label{lem:LinftySmallBalls} Let $\eta_\eps=\eps^{\alpha/(\alpha+d)}$. Under the conditions of \cref{thm:ContractionForTheta}, there exists a constant  $\omega=\omega(\alpha,m_1,M,D,D_1,\Phi,r)>0$ such that $\Pi(B_{KL}^\eps(\eta_\eps))\geq e^{-\omega (\eta_\eps/\eps)^2}$ for all $\eps$ small enough, uniformly in $\gamma_0$ in the set \cref{set}. 
	\end{lemma}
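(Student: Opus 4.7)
Write $\lambda_\eps:=\eps^{d/(\alpha+d)}$, so that $\lambda_\eps^{-2}=(\eta_\eps/\eps)^2$. The plan is to reduce the KL-ball to a sup-norm ball for $\theta$, then apply a Cameron--Martin shift combined with a Li--Linde centered small-ball bound. By \cref{lem:HHNormBallsContainedInKLBalls} it suffices to lower bound $\Pi(\norm{\Lambda_{\Phi\circ\theta}-\Lambda_{\gamma_0}}_{\HH_r}\leq \eta_\eps)$. On the event $E=\{\norm{\theta-\theta_0}_\infty\leq c\eta_\eps^2\}$ (for a small constant $c=c(\Phi,m_1,M,\Hh,d)$), the continuous embedding $\Hh\hookrightarrow H^\alpha(D)\hookrightarrow C(D)$ uniformly bounds $\norm{\theta}_\infty,\norm{\theta_0}_\infty$ in terms of $M$, and $\support(\zeta)\subset D_1$ forces $\gamma:=\Phi\circ\theta\in\Gamma_{m_1,D_1}$ with $\max(\norm{\gamma}_\infty,\norm{\gamma_0}_\infty)\leq M_1=M_1(\Phi,M,\Hh,d)$. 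Combining \cref{eqn:NormPhiCircThetaEqualsNormTheta} with \cref{lem:ForwardStability} (whose hypothesis is satisfied for $\eps$ small since $\eta_\eps^2\to 0$) yields $\norm{\Lambda_\gamma-\Lambda_{\gamma_0}}_{\HH_r}\leq C\sqrt{c}\,\eta_\eps$; fixing $c$ small makes this $\leq\eta_\eps$, so $\Pi(B_{KL}^\eps(\eta_\eps))\geq \Pi(E)$.

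To bound $\Pi(E)$, use that $\gamma_0=1$ off $D_0$ and $\Phi^{-1}(1)=0$ together force $\theta_0$ to vanish off $D_0\subset\{\zeta=1\}$, yielding the key support identity $\zeta\theta_0=\theta_0$. Thus $h_0:=\theta_0/\lambda_\eps\in\Hh$ satisfies $\lambda_\eps\zeta h_0=\theta_0$, so $\theta-\theta_0=\lambda_\eps\zeta(\theta'-h_0)$; since $\norm{\zeta}_\infty\leq 1$, the event $\{\norm{\theta'-h_0}_\infty\leq c\eta_\eps^2/\lambda_\eps\}$ lies inside $E$. Borell's Cameron--Martin shift for $\theta'\sim\Pi'$ gives
\[\Pi'(\norm{\theta'-h_0}_\infty\leq \delta)\geq e^{-\norm{h_0}_\Hh^2/2}\,\Pi'(\norm{\theta'}_\infty\leq \delta),\]
with $\norm{h_0}_\Hh=\norm{\theta_0}_\Hh/\lambda_\eps\leq M/\lambda_\eps$, so the shift cost is $\leq (M^2/2)(\eta_\eps/\eps)^2$. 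For the centered small ball at $\delta=c\eta_\eps^2/\lambda_\eps=c\eps^{(2\alpha-d)/(\alpha+d)}$, the embedding $\Hh\hookrightarrow H^\alpha\hookrightarrow C^{\alpha-d/2}(D)$ gives the metric entropy bound $\log N(\delta,K_1(\Hh),\norm{\cdot}_\infty)\lesssim \delta^{-2d/(2\alpha-d)}$, and Li--Linde \cite{Li1999} converts this into $-\log\Pi'(\norm{\theta'}_\infty\leq \delta)\lesssim \delta^{-2d/(2\alpha-d)}$. Substitution yields exponent $\lesssim\eps^{-2d/(\alpha+d)}=(\eta_\eps/\eps)^2$. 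Adding the shift cost gives $\Pi(B_{KL}^\eps(\eta_\eps))\geq e^{-\omega(\eta_\eps/\eps)^2}$, with $\omega$ depending only on the parameters listed in the lemma; the bound is uniform in $\gamma_0\in$~\cref{set} because only $\norm{\theta_0}_\Hh\leq M$ and the support properties of $\theta_0$ enter.

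The principal subtlety is the calibration of the two contributions: both the shift penalty $\norm{h_0}_\Hh^2\asymp\lambda_\eps^{-2}$ and the centered small-ball exponent $\asymp(\eta_\eps^2/\lambda_\eps)^{-2d/(2\alpha-d)}$ must be of order $(\eta_\eps/\eps)^2$. This matching is precisely what pins down the rescaling $\lambda_\eps=\eps^{d/(\alpha+d)}$ (equivalently $\eta_\eps=\eps^{\alpha/(\alpha+d)}$) and makes $\eta_\eps$ the correct rate for the prior-mass condition. The enabling structural input is the support identity $\zeta\theta_0=\theta_0$, which, combined with $\norm{\theta_0}_\Hh\leq M$, embeds $\theta_0$ into the Cameron--Martin space of the rescaled prior with norm at most $M/\lambda_\eps$.
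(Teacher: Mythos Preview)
Your overall architecture matches the paper's proof exactly: reduce $B_{KL}^\eps$ to an $\HH_r$-ball via \cref{lem:HHNormBallsContainedInKLBalls}, then to a sup-norm ball for $\theta$ via \cref{lem:ForwardStability} and \cref{eqn:NormPhiCircThetaEqualsNormTheta}, then use the support identity $\zeta\theta_0=\theta_0$ to perform a Cameron--Martin shift, and finish with a Li--Linde centred small-ball bound. The radii and the balancing of exponents are also the same as the paper's.

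However, your small-ball step contains two errors that happen to cancel. First, the entropy bound you quote is not sharp: going through the H\"older embedding $H^\alpha\hookrightarrow C^{\alpha-d/2}$ gives $\log N(\delta,K_1(\Hh),\norm{\cdot}_\infty)\lesssim \delta^{-2d/(2\alpha-d)}$, but the correct bound for the unit ball of $H^\alpha(D)$ in $L^\infty$ is $\log N\lesssim\delta^{-d/\alpha}$ (this is what the paper uses, citing the discussion after Corollary~4.3.38 in \cite{Gine2016}). Second, you misapply Li--Linde: Theorem~1.2 of \cite{Li1999} does \emph{not} say that an entropy exponent $a$ yields a small-ball exponent $a$; rather, $\log N\lesssim\delta^{-a}$ yields $-\log\Pi'(\norm{\theta'}_\infty\leq\delta)\lesssim\delta^{-s}$ with $a=\tfrac{2s}{2+s}$, i.e.\ $s=\tfrac{2a}{2-a}$. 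With the sharp entropy exponent $a=d/\alpha$ this gives the correct $s=2d/(2\alpha-d)$, which is the number you wrote down. But with your weaker entropy exponent $a=2d/(2\alpha-d)$, the correct Li--Linde conversion would give $s=d/(\alpha-d)$, and then $\delta^{-s}$ at $\delta\asymp\eps^{(2\alpha-d)/(\alpha+d)}$ is of order $\eps^{-d(2\alpha-d)/((\alpha+d)(\alpha-d))}$, which is \emph{strictly larger} than $(\eta_\eps/\eps)^2=\eps^{-2d/(\alpha+d)}$, so the argument would fail. In short: use the sharp $H^\alpha$ entropy bound $\delta^{-d/\alpha}$ and the correct Li--Linde relation $s=2a/(2-a)$, and your proof goes through exactly as in the paper.
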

	\begin{proof}
		By \cref{eqn:NormPhiCircThetaEqualsNormTheta} and a Sobolev embedding, there is a constant $M_0$ depending only on $\Phi$, $\alpha$ and $M$ such that $\norm{\gamma_0}_{\infty}\leq M_0$. By \cref{lem:ForwardStability}, we deduce, for a constant $C=C(r,D,D_1)$, 
		\begin{equation*}
		\norm{\Lambda_\gamma-\Lambda_{\gamma_0}}_{\HH_r} \leq C\frac{M_0\norm{\gamma}_\infty}{m_1^2}\norm{\gamma-\gamma_0}_\infty^{1/2} 
		\end{equation*} 
		provided $\norm{\gamma-\gamma_0}_{\infty}$ is small enough. It follows from this calculation and \cref{lem:HHNormBallsContainedInKLBalls} that for $\eta_\eps$ small enough and some constant $C'>0$ we have  \[\braces{\theta: \norm{\Phi\circ \theta -\gamma_0}_{\infty}\leq C'\eta_\eps^2}\subseteq \braces{\norm{\Lambda_{\Phi\circ \theta}-\Lambda_{\gamma_0}}_{\HH_r}\leq \eta_\eps}\subseteq B^\eps_{KL}(\eta_\eps).\] Appealing again to \cref{eqn:NormPhiCircThetaEqualsNormTheta}, we further deduce that \[\braces{\theta \in C_u(D) : \norm{\theta-\theta_0}_{\infty}\leq A\eta_\eps^2}\subseteq B_{KL}^\eps(\eta_\eps),\] for a constant $A=A(\alpha, M, m_1,D,D_1,r,\Phi)$, so that it remains to lower bound $\Pi \brackets[\big]{\norm{\theta-\theta_0}_{\infty}\leq A\eta_\eps^2}$. Note (recalling \cref{assumption:pco} and the definition of the prior \cref{eqn:tampering}) that $\Pi$ has RKHS $\Hh_\eps = \braces{ \zeta \theta' : \theta'\in \Hh}$, with norm $\norm{\cdot}_{\Hh_\eps}$ satisfying the bound $\norm{\theta}_{\Hh_\eps}\leq \eps^{-d/(\alpha+d)} \norm{\theta'}_{\Hh}=(\eta_\eps/\eps) \norm{\theta'}_{\Hh},$ for any $\theta'$ such that $\zeta \theta'=\theta$. Because $\theta_0=\Phi^{-1}\circ \gamma_0$ for some $\gamma_0\in \Gamma_{m_0,D_0}$, we see from the definitions of $\Phi$ and $\zeta$ that $\theta_0=\zeta \theta_0$, hence we deduce that $\norm{\theta_0}_{\Hh_\eps} \leq (\eta_\eps/\eps) \norm{\theta_0}_{\Hh}\leq M\eta_\eps/\eps$.  By Corollary 2.6.18 in \cite{Gine2016}, we then have
		\begin{align*}
		\Pi\brackets{\norm{\theta-\theta_0}_{\infty}\leq A\eta_\eps^2} & \geq e^{-\tfrac{1}{2}\norm{\theta_0}_{\Hh_\eps}^2} \Pi\brackets{\norm{\theta}_{\infty}\leq A\eta_\eps^2} \\
		& \geq e^{-\frac{1}{2}M^2(\eta_\eps/\eps)^2}\Pi' \brackets[\big]{\norm{\theta'}_{\infty}\leq A\tfrac{\eta_\eps^3}{\eps}}.
		\end{align*} 
		
		Next, since $\Hh$ embeds continuously into $H^\alpha(I_d)$ for some large enough cube $I_d$ (by a standard extension argument for Sobolev spaces), the unit ball $B_\mathcal H$ of $\mathcal H$ has covering numbers with respect to the supremum norm $N=N(B_\mathcal H,\norm{\cdot}_{\infty},\delta)$ (i.e.\ the smallest number of $\norm{\cdot}_{\infty}$ balls of radius $\delta$ needed to cover $B_\Hh$) satisfying
		\begin{equation}\label{eqn:CoveringNumbersForHalphaD} \log N(B_\mathcal H,\norm{\cdot}_{\infty},\delta)\leq K \delta^{-d/\alpha}\end{equation} for some constant $K=K(\alpha,D)$ (cf.~after Corollary 4.3.38 in \cite{Gine2016}). We can thus apply \cite{Li1999}, Theorem 1.2, to see \[\Pi' \brackets[\big]{\norm{\theta'}_{\infty}\leq A\tfrac{\eta_\eps^3}{\eps}}\geq e^{-A'\brackets{\frac{\eta_\eps^3}{\eps}}^{-s}},\] for some constant $A'=A'(A,K)$, where $s$ is such that $\frac{d}{\alpha}=\frac{2s}{2+s}$, i.e.\ $s=\frac{2d}{2\alpha-d}$. 
		
		Overall, we have shown $\Pi(B_{KL}^\eps(\eta_\eps))\geq e^{-\frac{1}{2}M^2(\eta_\eps/\eps)^2}e^{-A'\brackets{\eta_\eps^3/\eps}^{-2d/(2\alpha-d)}}$, where the constant $A'$ depends only on $D$, $\alpha$, $M$, $m_1$, $D_1$, $r$ and $\Phi$.  For $\eta_\eps=\eps^{\alpha/(\alpha+d)}$ we find $\brackets{\eta_\eps^3/\eps}^{-2d/(2\alpha-d)}= \brackets{\eta_\eps/\eps}^2$, and the result follows.\qedhere
	\end{proof}

	\subsection{Posterior asymptotics}\label{sec:Proofs:PosteriorContraction}
	
	\subsubsection{Posterior regularity and contraction about $\Lambda_{\gamma_0}$}
	
	The following two results follow ideas from Bayesian nonparametric statistics \cite{vdVvZ08, Ghosal2017} combined with the lemmas from the previous section. Together with the stability estimate \cref{lem:InverseStability}, these two estimates allow us to proceed as in \cite{Monard2019} to prove \cref{thm:ContractionForTheta}.
	\begin{lemma}\label{lem:C2ConcentrationOfPosterior} Let $\eta_\eps,\omega$ be as in \cref{lem:LinftySmallBalls}. Under the assumptions of \cref{thm:ContractionForTheta} there exists $M'>0$ such that
		
		\begin{equation}\label{eqn:C2ConcentrationOfPosteriorAtExponentialRate} \sup_{\gamma_0 \in \Gamma_{m_0,D_0}\cap\braces{\Phi\circ \theta : \theta\in\Hh,\norm{\theta}_{\Hh}\leq M}} P^{\gamma_0}_\eps \brackets[\big]{\Pi\brackets{\norm{\gamma}_{H^\beta(D)}>M' \mid Y} > e^{-(\omega+4)(\eta_\eps/\eps)^2}}\to 0,\end{equation} 
		as $\eps\to 0$. The bound \cref{eqn:C2ConcentrationOfPosteriorAtExponentialRate} also holds with the supremum norm $\norm{\cdot}_\infty$ in place of the $H^\beta(D)$ norm. 
	\end{lemma}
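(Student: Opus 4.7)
The plan is the standard Bayesian-nonparametric ratio argument. Write
\[
\Pi\brackets*{\norm{\gamma}_{H^\beta(D)}>M' \mid Y} = \frac{N}{D}, \quad N := \int_{\braces{\norm{\Phi\circ\theta}_{H^\beta(D)}>M'}} \frac{p_\eps^{\Phi\circ\theta}(Y)}{p_\eps^{\gamma_0}(Y)} \dPi(\theta),
\]
with $D$ the integral of the same likelihood ratio over all of $C_u(D)$. For the denominator, the white-noise form of the evidence lower-bound lemma (the Gaussian-shift analogue of Lemma~8.10 in \cite{Ghosal2017}, whose ``KL/variance'' hypotheses are supplied by \cref{lem:KLdistanceIsHHdistance,lem:HHNormBallsContainedInKLBalls}) gives, for any $L > 0$,
\[
P^{\gamma_0}_\eps\brackets*{ D < \Pi(B_{KL}^\eps(\eta_\eps)) e^{-(1+L)(\eta_\eps/\eps)^2}} \leq \frac{1}{L^2(\eta_\eps/\eps)^2}.
\]
Combined with the small-ball bound $\Pi(B_{KL}^\eps(\eta_\eps))\geq e^{-\omega(\eta_\eps/\eps)^2}$ of \cref{lem:LinftySmallBalls} (fixing say $L=4$), we conclude $D \geq e^{-(\omega+5)(\eta_\eps/\eps)^2}$ with $P^{\gamma_0}_\eps$-probability tending to $1$ uniformly in $\gamma_0$ from the set \cref{set}, since $(\eta_\eps/\eps)^2\to\infty$ as $\eps\to 0$.

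For the numerator, Fubini and the fact that likelihood ratios integrate to $1$ give $E^{\gamma_0}_\eps N = \Pi(\norm{\gamma}_{H^\beta(D)} > M')$. By \cref{eqn:NormPhiCircThetaBound} applied with $\beta$ in place of $\alpha$ (permissible as $\beta$ is a positive integer and $\beta>d/2$), $\braces{\norm{\Phi\circ\theta}_{H^\beta(D)}>M'}\subseteq \braces{\norm{\theta}_{H^\beta(D)}>M_1}$ with $M_1 = ((M'/C')-1)^{1/\beta}$. Since $\theta = \eps^{d/(\alpha+d)} \zeta \theta' = (\eps/\eta_\eps) \zeta \theta'$ where $\theta' \sim \Pi'$ is a centered Gaussian in $H^\beta(D)$ by \cref{assumption:pco}, the Borell--Sudakov--Tsirelson inequality (Theorem~2.5.8 in \cite{Gine2016}) applied to the tight centered Gaussian $\norm{\zeta\theta'}_{H^\beta(D)}$ yields, for all $r$ above its median $m^*$,
\[
\Pi'\brackets*{\norm{\zeta\theta'}_{H^\beta(D)}>r}\leq 2\exp\brackets*{-(r-m^*)^2/(2\sigma_*^2)},
\]
with $\sigma_*$ the weak variance (both $m^*,\sigma_*$ depending only on $\Pi'$ and $\zeta$). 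Rescaling,
\[
\Pi(\norm{\theta}_{H^\beta(D)}>M_1) = \Pi'\brackets*{\norm{\zeta\theta'}_{H^\beta(D)}>M_1 \eta_\eps/\eps} \leq 2 e^{-c M_1^2 (\eta_\eps/\eps)^2}
\]
for a fixed $c>0$, once $M_1 \eta_\eps/\eps \geq 2m^*$ (automatic for small $\eps$).

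Finally, Markov's inequality and a union bound give
\[
P^{\gamma_0}_\eps\brackets*{N/D > e^{-(\omega+4)(\eta_\eps/\eps)^2}} \leq P^{\gamma_0}_\eps(D<e^{-(\omega+5)(\eta_\eps/\eps)^2}) + 2 e^{(2\omega+9)(\eta_\eps/\eps)^2} e^{-c M_1^2(\eta_\eps/\eps)^2},
\]
and choosing $M'$ (hence $M_1$) large enough that $c M_1^2 > 2\omega+10$ drives both terms to $0$. The sup-norm version follows from the Sobolev embedding $H^\beta(D)\hookrightarrow L^\infty(D)$ (since $\beta>d/2$): $\braces{\norm{\gamma}_\infty>M''}\subseteq\braces{\norm{\gamma}_{H^\beta(D)}>c M''}$, so the same proof applies after enlarging $M'$ by a Sobolev constant. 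The main obstacle is calibrating the Borell tail bound against the denominator loss $e^{-\omega(\eta_\eps/\eps)^2}$: the rescaling built into \cref{eqn:tampering} is precisely what promotes the Gaussian-tail exponent by the extra factor $(\eta_\eps/\eps)^2$ needed to beat $\omega$, and its coefficient $c M_1^2$ can be made arbitrarily large by choosing $M'$ large.
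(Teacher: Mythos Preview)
Your proof is correct and follows essentially the same approach as the paper: bound the denominator via the evidence-lower-bound/\cref{lem:ELBO}, bound the numerator in expectation by the prior mass $\Pi(\norm{\gamma}_{H^\beta}>M')$, and control that prior mass using the rescaling $\theta=(\eps/\eta_\eps)\zeta\theta'$ together with a Gaussian concentration inequality. The only cosmetic differences are in the constants (the paper uses $\omega+2$ from \cref{lem:ELBO} rather than your $\omega+5$), and the paper invokes Fernique's theorem applied to $\norm{\theta'}_{H^\beta}$ after the multiplication bound $\norm{\zeta\theta'}_{H^\beta}\le C\norm{\zeta}_{H^\beta}\norm{\theta'}_{H^\beta}$, whereas you apply Borell--Sudakov--Tsirelson directly to $\norm{\zeta\theta'}_{H^\beta}$.
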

	\begin{theorem}\label{thm:ContractionForLambda} Let $\eta_\eps,\omega$ be as in \cref{lem:LinftySmallBalls}. Under the conditions of \cref{thm:ContractionForTheta}, there exists $C>0$ such that  
		\begin{equation} \label{eqn:PosteriorConcentratesExponentiallyFast} \sup_{\gamma_0\in \Gamma_{m_0,D_0}
			\cap\braces{\Phi\circ \theta : \theta\in \Hh, \norm{\theta}_{\Hh}\leq M} }P^{\gamma_0}_\eps \brackets[\big]{ \Pi (\norm{\Lambda_\gamma-\Lambda_{\gamma_0}}_{\HH_r}> C\eta_\eps \mid Y ) > 2e^{-(\omega+4)(\eta/\eps)^2} } \to 0,
		\end{equation}
		as $\eps\to 0$.
	\end{theorem}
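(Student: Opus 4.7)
The plan is to follow the standard posterior contraction template of Ghosal--Ghosh--van der Vaart, in the form adapted to inverse problems in \cite{Monard2019}, using the three ingredients already assembled: the small-ball estimate \cref{lem:LinftySmallBalls}, the tests \cref{lem:ExistenceOfTests}, and the posterior regularity estimate \cref{lem:C2ConcentrationOfPosterior}. Write the posterior as the ratio
\[\Pi(A_\eps\mid Y) = \frac{\int_{A_\eps} \frac{p^\gamma_\eps}{p^{\gamma_0}_\eps}(Y)\,\dPi(\gamma)}{\int \frac{p^\gamma_\eps}{p^{\gamma_0}_\eps}(Y)\,\dPi(\gamma)},\quad A_\eps:=\braces{\gamma:\norm{\Lambda_\gamma-\Lambda_{\gamma_0}}_{\HH_r}>C\eta_\eps},\]
and split according to the regular set $B_\eps:=\braces{\gamma:\norm{\gamma}_\infty\le M'}$ with $M'$ from \cref{lem:C2ConcentrationOfPosterior}:
\[\Pi(A_\eps\mid Y) \le \Pi(A_\eps\cap B_\eps\mid Y) + \Pi(B_\eps^c\mid Y).\]
The second term is controlled directly by the $\norm{\cdot}_\infty$-variant of \cref{lem:C2ConcentrationOfPosterior}, which already provides the required $e^{-(\omega+4)(\eta_\eps/\eps)^2}$ bound with $P^{\gamma_0}_\eps$-probability tending to one.

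For the first term, I would first lower bound the denominator. Writing $Z_\eps$ for the denominator, a now-standard Fubini plus Chebyshev argument (see, e.g., Lemma 7.3.4 in \cite{Ghosal2017} or the proof of Theorem 7.3.1 in \cite{Gine2016}) combined with the KL and variance bounds defining $B_{KL}^\eps$ (available through \cref{lem:KLdistanceIsHHdistance,lem:HHNormBallsContainedInKLBalls}) yields an event $\Omega_\eps$, with $P^{\gamma_0}_\eps(\Omega_\eps^c)\to 0$, on which
\[Z_\eps \;\ge\; \Pi\!\brackets[\big]{B_{KL}^\eps(\eta_\eps)}\,e^{-2(\eta_\eps/\eps)^2}\;\ge\;e^{-(\omega+2)(\eta_\eps/\eps)^2},\]
where the final inequality is \cref{lem:LinftySmallBalls}.

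Next, invoke \cref{lem:ExistenceOfTests} with $\kappa>2\omega+10$ and $M_1=M'$ to obtain tests $\psi_\eps$. Using the standard decomposition $\Pi(A_\eps\cap B_\eps\mid Y)\le \psi_\eps+(1-\psi_\eps)\Pi(A_\eps\cap B_\eps\mid Y)$, restricting to $\Omega_\eps$, and applying Fubini,
\[E^{\gamma_0}_\eps\sqbrackets[\Big]{(1-\psi_\eps)\Pi(A_\eps\cap B_\eps\mid Y)\,\II_{\Omega_\eps}} \le e^{(\omega+2)(\eta_\eps/\eps)^2}\int_{A_\eps\cap B_\eps} E^\gamma_\eps[1-\psi_\eps]\,\dPi(\gamma)\le e^{-(\kappa-\omega-2)(\eta_\eps/\eps)^2},\]
since by construction every $\gamma\in A_\eps\cap B_\eps$ lies in the alternative set of \cref{lem:ExistenceOfTests}. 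Together with $E^{\gamma_0}_\eps\psi_\eps\le e^{-\kappa(\eta_\eps/\eps)^2}$, Markov's inequality gives $P^{\gamma_0}_\eps(\Pi(A_\eps\cap B_\eps\mid Y)>e^{-(\omega+4)(\eta_\eps/\eps)^2})\to 0$, uniformly over the required set of $\gamma_0$. Combining both terms yields \cref{eqn:PosteriorConcentratesExponentiallyFast}.

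The calculations are essentially routine once the three building blocks are in place; the main subtlety is simply the bookkeeping of constants, ensuring that $\kappa$ can be chosen (independently of $\eps$) larger than the loss incurred from the evidence lower bound plus the target exponent $\omega+4$, which is possible because \cref{lem:ExistenceOfTests} supplies tests for every $\kappa>0$. The separate treatment of $B_\eps^c$ via \cref{lem:C2ConcentrationOfPosterior} is precisely what allows one to restrict the test construction to $\norm{\gamma}_\infty\le M'$, where \cref{lem:ExistenceOfTests} applies; without the regularity bound the unbounded prior support would otherwise be the main obstacle.
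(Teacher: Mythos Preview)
Your proposal is correct and follows essentially the same route as the paper: the same split into $A_\eps\cap B_\eps$ and $B_\eps^c$, the same evidence lower bound (your $\Omega_\eps$ is the paper's event $L$ from \cref{lem:ELBO}), the same use of \cref{lem:ExistenceOfTests} on the bounded slice, and the same Fubini/Markov combination. The only cosmetic differences are that the paper takes $\kappa>2\omega+6$ (your $\kappa>2\omega+10$ is of course sufficient) and that the evidence lower bound is packaged as a separate lemma rather than re-derived inline.
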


	To prove the preceding results, we note that the posterior from \cref{eqn:PosteriorDefinition} can be written as
	\begin{equation}\label{posto}
	\Pi(B \mid Y)= \frac{\int_B p^\gamma_\eps(Y)/p^{\gamma_0}_\eps(Y) \dPi(\gamma)}{\int_{\Gamma_{m_1,D_1}} p^\gamma_\eps(Y)/p^{\gamma_0}_\eps(Y) \dPi(\gamma)}, \quad B\subset \Gamma_{m_1,D_1} \text{ measurable}.
	\end{equation}
	
	The following lemma controls the size of the denominator in the last expression.
	
	\begin{lemma}\label{lem:ELBO}
		Let $\eta_\eps,\omega$ be as in \cref{lem:LinftySmallBalls}. Introduce the event \[L=L_{\omega,\gamma_0}=\braces[\Big]{\int_{\Gamma_{m_1,D_1}} \frac{p^\gamma_\eps}{p^{\gamma_0}_\eps}(Y)\dPi(\gamma)\geq e^{-(\omega+2)(\eta_\eps/\eps)^2}}.\] Then, under the assumptions of \cref{thm:ContractionForTheta}, we have
		\begin{equation}\sup_{\gamma_0 \in \Gamma_{m_0,D_0}\cap\braces{\Phi\circ \theta : \theta\in\Hh,\norm{\theta}_{\Hh}\leq M}} P^{\gamma_0}_\eps(L^c)\to 0, \quad \text{as}~~\eps\to 0.\end{equation}  
	\end{lemma}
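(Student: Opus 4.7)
The plan is to adapt the standard ``evidence lower bound'' argument from Bayesian nonparametrics (e.g.\ \cite{Ghosal2017}, Lemma 8.10). The integral over $\Gamma_{m_1,D_1}$ is lower-bounded by restricting to the Kullback--Leibler neighbourhood $B_{KL}^\eps(\eta_\eps)$ defined in \cref{eqn:B_KLdefinition}, whose prior mass is at least $e^{-\omega(\eta_\eps/\eps)^2}$ by \cref{lem:LinftySmallBalls}. Letting $\bar\Pi_{KL}$ denote the restriction of $\Pi$ to $B_{KL}^\eps(\eta_\eps)$, renormalised to be a probability measure, Jensen's inequality applied to the exponential gives
\[
\int_{\Gamma_{m_1,D_1}} \frac{p^\gamma_\eps}{p^{\gamma_0}_\eps}(Y)\dPi(\gamma) \geq \Pi(B_{KL}^\eps(\eta_\eps)) \cdot \exp(Z), \qquad Z := \int \log \frac{p^\gamma_\eps}{p^{\gamma_0}_\eps}(Y) \dif \bar \Pi_{KL}(\gamma).
\]

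Next I would control $Z$ probabilistically under $P^{\gamma_0}_\eps$. By Fubini and the definition of KL divergence,
\[
E^{\gamma_0}_\eps[Z] = -\int K(p^{\gamma_0}_\eps,p^\gamma_\eps)\dif \bar\Pi_{KL}(\gamma) \geq -(\eta_\eps/\eps)^2,
\]
directly from the first inequality in \cref{eqn:B_KLdefinition}. For the variance, Cauchy--Schwarz (or Jensen applied to $x\mapsto x^2$) together with Fubini yields
\[
\Var_{\gamma_0}(Z) \leq \int \Var_{\gamma_0}\brackets[\big]{\log (p^\gamma_\eps/p^{\gamma_0}_\eps)} \dif \bar\Pi_{KL}(\gamma) \leq (\eta_\eps/\eps)^2,
\]
using the second inequality in \cref{eqn:B_KLdefinition}. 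Chebyshev's inequality then gives
\[
P^{\gamma_0}_\eps\brackets[\big]{Z < E^{\gamma_0}_\eps[Z] - (\eta_\eps/\eps)^2} \leq \Var_{\gamma_0}(Z)/(\eta_\eps/\eps)^4 \leq (\eps/\eta_\eps)^2,
\]
which tends to zero since $\eta_\eps/\eps = \eps^{-d/(\alpha+d)}\to\infty$; crucially, this bound is uniform in $\gamma_0$ because the mean and variance bounds above use only the defining inequalities of $B_{KL}^\eps(\eta_\eps)$.

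Combining the displays, on the complement of the Chebyshev event we have $Z \geq -2(\eta_\eps/\eps)^2$, and together with the prior mass bound from \cref{lem:LinftySmallBalls} (which is itself uniform over the set \cref{set} by that lemma's statement), we conclude
\[
\int_{\Gamma_{m_1,D_1}} \frac{p^\gamma_\eps}{p^{\gamma_0}_\eps}(Y)\dPi(\gamma) \geq e^{-\omega (\eta_\eps/\eps)^2}\cdot e^{-2(\eta_\eps/\eps)^2} = e^{-(\omega+2)(\eta_\eps/\eps)^2}
\]
with $P^{\gamma_0}_\eps$-probability tending to one uniformly in $\gamma_0$, which is exactly the claim. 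The only non-routine ingredient is the prior mass bound, which has already been supplied by \cref{lem:LinftySmallBalls}; everything else is a direct computation using Jensen's and Chebyshev's inequalities and the defining inequalities of $B^\eps_{KL}(\eta_\eps)$, so I do not anticipate a serious obstacle beyond bookkeeping.
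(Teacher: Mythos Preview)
Your proposal is correct and is exactly the standard argument the paper invokes: the paper's proof simply cites \cite{Gine2016}, Lemma~7.3.4, noting that it is a routine combination of Jensen's inequality, Fubini's theorem and Chebyshev's inequality applied to the renormalised restriction of $\Pi$ to $B_{KL}^\eps(\eta_\eps)$, which is precisely what you have written out.
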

	\begin{proof} Given \cref{lem:LinftySmallBalls}, the proof is a standard argument based on Chebyshev's inequality, Jensen's inequality and Fubini's theorem (for example, see \cite{Gine2016} Lemma 7.3.4 for a proof in the setting of white noise on $L^2([0,1])$ which adapts straightforwardly to the current Hilbert space setting, with probability measure $\nu$ there equal to the renormalised restriction of $\dPi(\gamma)$ to $B_{KL}^\eps(\eta_\eps)$). \qedhere
	\end{proof}

	\begin{proof}[Proof of \cref{lem:C2ConcentrationOfPosterior}]
		Define $L$ as in \cref{lem:ELBO}. By \cref{posto}, using Fubini's theorem and the fact that $E^{\gamma_0}_\eps \frac{p^\gamma_\eps}{p^{\gamma_0}_\eps}(Y) = 1$, we see that for every Borel set $B$
		\[ E^{\gamma_0}_\eps \sqbrackets{\II_L \Pi \brackets{B \mid Y}} \leq e^{(\omega+2)(\eta_\eps/\eps)^2} E^{\gamma_0}_\eps \int_B \frac{p^\gamma_\eps}{p^{\gamma_0}_\eps}(Y) \dPi(\gamma)= e^{(\omega+2)(\eta_\eps/\eps)^2} \Pi(B). \] Then, setting $B=\braces{\norm{\gamma}_{H^\beta(D)}>M'}$, an application of Markov's inequality yields
		\begin{equation*}
		P^{\gamma_0}_\eps \brackets[\big]{\Pi\brackets{\norm{\gamma}_{H^\beta(D)}> M' \mid Y} > e^{-(\omega+4)(\eta_\eps/\eps)^2}} \leq P^{\gamma_0}_\eps(L^c) + e^{(2\omega+6)(\eta_\eps/\eps)^2} \Pi(\norm{\gamma}_{H^\beta(D)}>M').
		\end{equation*}
		The first term on the right vanishes asymptotically, uniformly across $\gamma_0$ in the given set, by \cref{lem:ELBO}. For the second, we recall \cref{eqn:NormPhiCircThetaBound} and the definition \cref{eqn:tampering} of the prior. In conjunction with the facts that $\eps^{-d/(\alpha+d)}=\eta_\eps/\eps$ for our choice of $\eta_\eps$ and that $\norm{\zeta \theta'}_{H^\beta(D)}\leq C\norm{\zeta}_{H^\beta(D)}\norm{\theta'}_{H^\beta(D)}$ for some constant $C$ (since $\beta>d/2$), these allow us to deduce that
		\[ \Pi\brackets{\norm{\gamma}_{H^\beta(D)}>M'}\leq \Pi'\brackets[\Big]{\norm{\theta'}_{H^\beta(D)}>\tfrac{\eta_\eps}{\eps}(C\norm{\zeta}_{H^\beta(D)})^{-1}(M'/C'-1)^{1/\beta}}.\] 
		Since $\eta_\eps/\eps \to \infty$ and since $\Pi'(H^\beta(D))=1$ by hypothesis, we can apply a version of Fernique's theorem, more specifically Theorem 2.1.20  in \cite{Gine2016} (see also Example 2.1.6 therein) to deduce that for any $c>0$ there exists a $M'=M'(\beta,c, C',\zeta)$ such that the last probability does not exceed $e^{-c(\eta_\eps/\eps)^2}$. Taking $c>2\omega+6$ concludes the proof for the $H^\beta(D)$ norm, and the result for the supremum norm follows by a Sobolev embedding and adjusting the constant $M'$. \qedhere
	\end{proof}

	\begin{proof}[Proof of \cref{thm:ContractionForLambda}]
		We decompose in a standard way: writing \[S=\braces{\gamma \in \Gamma_{m_1,D_1} : \norm{\Lambda_\gamma-\Lambda_{\gamma_0}}_{\HH_r} > C\eta_\eps,~\norm{\gamma}_{\infty}\leq M'},\] for $M'$ the constant of \cref{lem:C2ConcentrationOfPosterior} and $C$ a large enough constant (to be chosen below), we have, for $\psi$ the test given by \cref{lem:ExistenceOfTests} and $L$ as in \cref{lem:ELBO},
		\begin{equation}\label{eqn:DecompositionOfPosteriorFarawayMass}
		\begin{aligned}
		\Pi \brackets[\big]{\norm{\Lambda_\gamma-\Lambda_{\gamma_0}}_{\HH_r} > C\eta_\eps \mid Y} &\leq \Pi \brackets[\big]{S \mid Y} + \Pi \brackets[\big]{\norm{\gamma}_{\infty} > M' \mid Y}, \\
		\Pi \brackets{S \mid Y} &\leq \II_{L^c}+\psi +\Pi\brackets{S \mid Y} \II_L [1-\psi].
		\end{aligned}
		\end{equation}
		Hence, denoting by $\Cc$ the event $\braces[\big]{ \Pi (\norm{\Lambda_\gamma-\Lambda_{\gamma_0}}_{\HH_r}> C\eta_\eps \mid Y ) > 2e^{-(\omega+4)(\eta_\eps/\eps)^2} }$, we have
		\begin{align*}&P^{\gamma_0}_\eps (\Cc)  \leq P^{\gamma_0}_\eps\brackets[\Big]{\Pi \brackets[\big]{\norm{\gamma}_{\infty} > M' \mid Y}>e^{-(\omega+4)(\eta_\eps/\eps)^2}}+ P^{\gamma_0}_\eps\brackets[\Big]{\Pi\brackets{S \mid Y}>e^{-(\omega+4)(\eta_\eps/\eps)^2}}, \\
		&P^{\gamma_0}_\eps\brackets[\Big]{\Pi\brackets{S \mid Y}>e^{-(\omega+4)(\eta_\eps/\eps)^2}} \leq P^{\gamma_0}_\eps(L^c)+E^{\gamma_0}_\eps \psi+ P^{\gamma_0}_\eps\brackets[\Big]{\Pi\brackets{S \mid Y} \II_L [1-\psi]>e^{-(\omega+4)(\eta_\eps/\eps)^2}}. \end{align*} In view of \cref{lem:C2ConcentrationOfPosterior,lem:ExistenceOfTests,lem:ELBO}, it suffices to show that there exists a constant $C$ such that $P^{\gamma_0}_\eps\brackets[\Big]{\Pi\brackets{S \mid Y} \II_L [1-\psi]>e^{-(\omega+4)(\eta_\eps/\eps)^2}}\to 0$, uniformly across $\gamma_0$ in the given set, and we note that $\norm{\gamma_0}_\infty$ is uniformly bounded in the set by a Sobolev embedding. Appealing to \cref{posto}, Fubini's theorem and again \cref{lem:ExistenceOfTests} we have for every $\kappa>0$ and for $C$ large enough (depending on $\kappa$),
		\begin{align*} 
		E^{\gamma_0}_\eps \sqbrackets[\Big]{\Pi \brackets{S \mid Y}\II_L (1-\psi)}  & \leq e^{(\omega+2)(\eta_\eps/\eps)^2} E^{\gamma_0}_\eps \int_{S} \frac{p^\gamma_\eps}{p^{\gamma_0}_\eps}(Y) (1-\psi)(Y) \dPi(\gamma) \\
		& \leq e^{(\omega+2)(\eta_\eps/\eps)^2} \int_{S} E_\eps^\gamma \sqbrackets{(1-\psi)(Y)}\dPi(\gamma) \\
		& \leq e^{(\omega+2-\kappa)(\eta_\eps/\eps)^2},
		\end{align*}
		hence by Markov's inequality, the probability in question is bounded by 
		$e^{(2\omega+6-\kappa)(\eta_\eps/\eps)^2}$. This tends to zero, uniformly in $\gamma_0$, if $C$ is large enough to permit $\kappa>2\omega+6$.  \qedhere
	\end{proof}
	
	\subsubsection{Proof of \cref{thm:ContractionForTheta}}
	
	Recall \Cref{lem:InverseStability} to the effect that
	\[\norm{\gamma-\gamma_0}_{\infty} \leq C' \abs{\log \norm{\Lambda_\gamma-\Lambda_{\gamma_0}}_{\HH_r}}^{-\delta},\] at least for $\norm{\Lambda_\gamma-\Lambda_{\gamma_0}}_{\HH_r}$ small enough, for some $\delta=\delta(d, \beta)>0$ and a constant $C'$ depending only on $M$, $D$, $D_1$, $\alpha$, $\beta$, $r$, and an upper bound for $\norm{\gamma}_{H^\beta(D)}$, where we have also used $\norm{\gamma_0}_{H^\beta(D)} \leq \norm{\gamma_0}_{H^\alpha(D)}$, eq.~\cref{eqn:NormPhiCircThetaBound} and the hypothesis on $\gamma_0$. Thus, for $M'>0$ the constant of \cref{lem:C2ConcentrationOfPosterior} and $C_1>0$ large enough that \cref{thm:ContractionForLambda} applies with constant $C_1$, we have \[\brackets{\norm{\gamma}_{H^\beta(D)} \leq M'}\wedge \brackets{\norm{\Lambda_\gamma-\Lambda_{\gamma_0}}_{\HH_r}\leq C_1\eta_\eps} \implies \norm{\gamma-\gamma_0}_{\infty}\leq C'\xi_{\eps,\delta}.\] In view of \cref{eqn:NormPhiInverseCircGammaLinfty}, for $\eps$ small enough (noting that since $C'\xi_{\eps,\delta}\to 0$ it is eventually smaller than $m_0-m_1$), we further deduce for a constant $C$ that
	\begin{equation} \label{eqn:PosteriorProbOfDistanceThetaControl}\Pi\brackets{\norm{\theta-\theta_0}_{\infty}>C\xi_{\eps,\delta} \mid Y} \leq \Pi \brackets{\norm{\Lambda_\gamma-\Lambda_{\gamma_0}}_{\HH_r}>C_1\eta_\eps \mid Y}+\Pi\brackets{\norm{\gamma}_{H^\beta(D)} >M' \mid Y}. \end{equation} Then \cref{thm:ContractionForLambda,lem:C2ConcentrationOfPosterior} imply the contraction rate \cref{eqn:ContractionForTheta}.
	
	To prove consistency of the posterior mean $E[\theta \mid Y]$, introduce, for $\omega,L$ as in \cref{lem:ELBO}, the event
	\[\Aa=L \cap \braces[\Big]{\Pi\brackets{\norm{\theta-\theta_0}_{\infty}>C\xi_{\eps,\delta} \mid Y} \leq 3e^{-(\omega+4)(\eta_\eps/\eps)^2}},\] and note that
	\begin{equation}\label{eqn:BoundingDistanceOfPosteriorMean1}
	P^{\gamma_0}_\eps \brackets[\big]{\norm{E^{\Pi}\sqbrackets{\theta\mid Y}-\theta_0}_{\infty}>K\xi_{\eps,\delta}} \leq P^{\gamma_0}_\eps(\Aa^c) + P^{\gamma_0}_\eps\brackets[\big]{\norm{E^{\Pi}\sqbrackets{\theta-\theta_0 \mid Y}}_{\infty} \II_\Aa > K\xi_{\eps,\delta}} .
	\end{equation}
	In view of \cref{eqn:PosteriorProbOfDistanceThetaControl}, observe that $\Aa^c$ is contained in
	\[ L^c \cup \braces[\Big]{\Pi \brackets{\norm{\Lambda_\gamma-\Lambda_{\gamma_0}}_{\HH_r}>C_1\eta_\eps \mid Y}>2e^{-(\omega+4)(\eta_\eps/\eps)^2}}\cup \braces[\Big]{\Pi\brackets{\norm{\gamma}_{H^\beta(D)} >M' \mid Y}>e^{-(\omega+4)(\eta_\eps/\eps)^2}}, \] hence, by \cref{lem:C2ConcentrationOfPosterior,thm:ContractionForLambda,lem:ELBO}, $P^{\gamma_0}_\eps(\Aa^c)\to 0$ as $\eps\to 0$, uniformly in $\gamma_0$.
	Now for the second term in \cref{eqn:BoundingDistanceOfPosteriorMean1}, by Jensen's inequality and the Cauchy--Schwarz inequality, we have
	\begin{align*}\norm{E^{\Pi}\sqbrackets{\theta-\theta_0 \mid Y}}_{\infty} \II_\Aa &\leq C\xi_{\eps,\delta} + E^{\Pi}\sqbrackets [\Big]{\norm{\theta-\theta_0}_{\infty}\II\braces{\norm{\theta-\theta_0}_{\infty}>C\xi_{\eps,\delta}} \mathrel{\big |} Y} \II_\Aa \\
	&\leq C\xi_{\eps,\delta} +\brackets[\big]{E^{\Pi} \sqbrackets{ \norm{\theta-\theta_0}_{\infty}^2 \mid Y}}^{1/2} \Pi\brackets{\norm{\theta-\theta_0}_{\infty}>C\xi_{\eps,\delta} \mid Y}^{1/2} \II_\Aa,
	\end{align*}
	and by Markov's inequality, it follows for $K>C$ that
	\begin{equation}\label{eqn:BoundingDistanceOfPosteriorMean2} \begin{split} &P^{\gamma_0}_\eps (\norm{E^{\Pi}\sqbrackets{\theta-\theta_0 \mid Y}}_{\infty} \II_\Aa > K\xi_{\eps,\delta} ) \leq \\ &\quad \tfrac{1}{(K-C)\xi_{\eps,\delta}} E^{\gamma_0}_\eps \sqbrackets[\Big]{ \brackets[\big]{E^{\Pi} \sqbrackets{ \norm{\theta-\theta_0}_{\infty}^2 \mid Y}}^{1/2} \brackets{\Pi\braces{\norm{\theta-\theta_0}_{\infty}>C\xi_{\eps,\delta} \mid Y}}^{1/2} \II_\Aa }.\end{split}\end{equation}
	Again applying the Cauchy--Schwarz inequality the last expected value is bounded by
	\begin{equation*} \quad E^{\gamma_0}_\eps \sqbrackets[\big]{ E^{\Pi} \sqbrackets{ \norm{\theta-\theta_0}_{\infty}^2 \mid Y}\II_\Aa}^{1/2} E^{\gamma_0}_\eps\sqbrackets[\big]{ \Pi\brackets{\norm{\theta-\theta_0}_{\infty}>C\xi_{\eps,\delta} \mid Y}\II_\Aa}^{1/2}. \end{equation*}
	From the definition of the event $\Aa$ it is immediate that
	\[E^{\gamma_0}_\eps\sqbrackets[\big]{ \Pi\brackets{\norm{\theta-\theta_0}_{\infty}>C\xi_{\eps,\delta} \mid Y} \II_\Aa} \leq 3e^{-(\omega+4)(\eta_\eps/\eps)^2},\]
	while, applying \cref{posto} and Fubini's theorem and since $E^{\gamma_0}_\eps\tfrac{p^{\Phi\circ \theta}_\eps}{p^{\gamma_0}_\eps}(Y)=1$, we have \begin{align*}
	E^{\gamma_0}_\eps \sqbrackets[\big]{ E^{\Pi} \sqbrackets{ \norm{\theta-\theta_0}_{\infty}^2 \mid Y}\II_\Aa} &\leq e^{(\omega+2)(\eta_\eps/\eps)^2} E^{\gamma_0}_\eps \sqbrackets[\Big]{  \int_{C_u(D) } \norm{\theta-\theta_0}_{\infty}^2 \tfrac{p^{\Phi\circ \theta}_\eps}{p^{\gamma_0}_\eps}(Y) \dPi(\theta)} \\
	& \leq e^{(\omega+2)(\eta_\eps/\eps)^2} E^{\Pi}\norm{\theta-\theta_0}_{\infty}^2.
	\end{align*}
	Plugging this back into \cref{eqn:BoundingDistanceOfPosteriorMean2}, we see
	\begin{equation*} P^{\gamma_0}_\eps (\norm{E^{\Pi}\sqbrackets{\theta-\theta_0 \mid Y}}_{\infty} \II_\Aa > K\xi_{\eps,\delta} ) \leq \tfrac{\sqrt{3}}{(K-C)\xi_{\eps,\delta}} \brackets[\Big]{e^{(\omega+2)(\eta_\eps/\eps)^2} E^{\Pi}\norm{\theta-\theta_0}_{\infty}^2 e^{-(\omega+4)(\eta_\eps/\eps)^2}}^{1/2}.\end{equation*}
	Note that, since $E^{\Pi}\norm{\theta-\theta_0}_{\infty}^2 \leq 2\brackets{ \norm{\theta_0}_{\infty}^2+E^{\Pi}\norm{\theta}_{\infty}^2}$ and $E^{\Pi'}\norm{\theta'}^2_{\infty}$ is finite (Exercise 2.1.2 and Theorem 2.1.20a in \cite{Gine2016}), a Sobolev embedding combined with the prior definition \cref{eqn:tampering} implies that $E^{\Pi}\norm{\theta-\theta_0}_{\infty}^2$ is bounded uniformly across the specified $\theta_0$'s. Since $e^{- (\eta_\eps/\eps)^2}/\xi_{\eps,\delta}\to 0$, we see, returning to \cref{eqn:BoundingDistanceOfPosteriorMean1}, that the result follows.

	\subsection{Proof of the lower bound \cref{thm:MinimaxLowerBound}} \label{sec:Proofs:LowerBound}
	Recall the shorthand \cref{eqn:XiEpsDelta} and also the definition of $K(p,q)$ from before \cref{lem:KLdistanceIsHHdistance}. It is enough to find $\gamma_0,\gamma_1\in\Gamma^\alpha_{m_0,D_0}(M)$ (both allowed to depend on $\eps$) such that, for some $\mu$ small enough (to be chosen), 
	\begin{enumerate}[i.]
		\item $\norm{\gamma_1-\gamma_0}_{\infty}\geq \xi_{\eps,\delta'}\equiv \log(1/\eps)^{-\delta'}$
		\item $K(p^{\gamma_1}_\eps,p^{\gamma_0}_\eps)\leq \mu . $
	\end{enumerate}
	Indeed, the standard proof of this reduction (for example as in \cite{Gine2016}, Theorem 6.3.2, or Chapter 2 in \cite{T09}) is as follows. Under condition (i), noting that $\psi=\II\braces{\norm{\hat{\gamma}-\gamma_1}_{\infty}< \norm{\hat{\gamma}-\gamma_0}_{\infty}}$ yields a test of $H_0: \gamma=\gamma_0$ against $H_1: \gamma=\gamma_1$, we see 
	\begin{equation*}\label{eqn:MinimaxRiskLowerBoundedBy2PointTests} \inf_{\hat{\gamma}} \sup_{\gamma\in\Gamma^\alpha_{m_0, D_0}(M)} P^\gamma_\eps\big(\norm{\hat{\gamma}-\gamma}_{\infty} \geq \tfrac{1}{2} \xi_{\eps,\delta'} \big) \ge \inf_\psi \max\brackets{P^{\gamma_0}_\eps(\psi\not = 0),P^{\gamma_1}_\eps(\psi\not= 1)}, \end{equation*} where the latter infimum is over all tests $\psi$. Introducing the event $A=\braces[\big]{\frac{p^{\gamma_0}_\eps}{p^{\gamma_1}_\eps}\geq 1/2}$, we see
	\begin{equation*}
	P^{\gamma_0}_\eps(\psi\not = 0) \geq  E^{\gamma_1}_\eps\sqbrackets[\big]{\tfrac{p^{\gamma_0}_\eps}{p^{\gamma_1}_\eps} \II_A \psi}
	\geq \tfrac{1}{2}\sqbrackets{ P^{\gamma_1}_\eps\brackets{\psi=1}- P^{\gamma_1}_\eps(A^c)}
	\end{equation*}
	Thus, writing $p_1=P^{\gamma_1}_\eps(\psi=1)$, we see
	\begin{align*} \max\brackets{P^{\gamma_0}_\eps(\psi\not=0),P^{\gamma_1}_\eps(\psi\not =1)} &\geq \max \brackets{ \tfrac{1}{2}(p_1-P^{\gamma_1}_\eps(A^c)),1-p_1} \\ &\geq \inf_{p\in[0,1]} \max \brackets{ \tfrac{1}{2}(p-P^{\gamma_1}_\eps(A^c)),1-p}. \end{align*}
	The infimum is attained when $\frac{1}{2}(p-P^{\gamma_1}_\eps(A^c))=1-p$ and takes the value $\frac{1}{3} P^{\gamma_1}_\eps(A)$ so that
	\begin{equation} \label{eqn:MinimaxRiskAnIntermediateLowerBound}
	\inf_{\hat{\gamma}} \sup_{\gamma\in\Gamma^\alpha_{m_0, D_0}(M)} P^\gamma_\eps\big(\norm{\hat{\gamma}-\gamma}_{\infty} \geq \tfrac{1}{2} \xi_{\eps,\delta'} \big) \geq \tfrac{1}{3} P^{\gamma_1}_\eps(A).
	\end{equation}
	Next observe
	\begin{align*}
	P^{\gamma_1}_\eps(A)&= P^{\gamma_1}_\eps\sqbrackets[\big]{\tfrac{p^{\gamma_1}_\eps}{p^{\gamma_0}_\eps} \leq  2}= 1- P^{\gamma_1}_\eps\sqbrackets[\big]{\log\brackets[\big]{\tfrac{p^{\gamma_1}_\eps}{p^{\gamma_0}_\eps}}
		>\log 2} \geq 1- P^{\gamma_1}_\eps\sqbrackets[\big]{\abs{\log(\tfrac{p^{\gamma_1}_\eps}{p^{\gamma_0}_\eps})}>\log 2} \\ &\geq 1-(\log{2})^{-1} E^{\gamma_1}_\eps \abs[\Big]{\log \brackets[\big]{ \tfrac{p^{\gamma_1}_\eps}{p^{\gamma_0}_\eps}}},
	\end{align*}
	where we have used Markov's inequality to attain the final expression.
	By the second Pinsker inequality (Proposition 6.1.7b in \cite{Gine2016}), using condition (ii) we can continue the chain of inequalities to see
	\[ P^{\gamma_1}_\eps(A) \geq 1-(\log 2)^{-1}\sqbrackets[\big]{K(p^{\gamma_1}_\eps,p^{\gamma_0}_\eps) + \sqrt{2K(p^{\gamma_1}_\eps,p^{\gamma_0}_\eps)} } \geq 1- (\log 2)^{-1} \brackets{\mu +\sqrt{2\mu}} . \] 
	Choosing $\mu$ small enough, we can thus lower bound \cref{eqn:MinimaxRiskAnIntermediateLowerBound} by
	\[\tfrac{1}{3} \brackets[\Big]{1- \frac{\mu+\sqrt{2\mu}}{\log 2}} >\frac{1}{4},\]  
	so that \cref{thm:MinimaxLowerBound} will follow.
	
	Now we prove the existence of $\gamma_0,\gamma_1\in\Gamma^\alpha_{m_0,D_0}(M)$ satisfying conditions (i) and (ii). We appeal to Corollary 1 in \cite{Mandache2001}, which says that for any integer $k\geq 2$, any $q\geq 0$, some $B>0$ and any $\xi\in(0,1)$ there exist $\gamma_0,\gamma_1$ such that  $\support(\gamma_j-1)\subseteq D_0$, $\gamma_j\geq 1$ on $D$ for $j=0,1$, and \begin{enumerate}[a.]
		\item $\norm{\gamma_1-\gamma_0}_{\infty} \geq \xi,$
		\item $\norm{\Lambda_{\gamma_1}-\Lambda_{\gamma_0}}_{H^{-q}(\partial D)/\CC\to H^q_\diamond(\partial D)} \leq \exp\brackets[\big]{-\xi^{-\frac{d}{(2d-1)k}}},$
		\item $\max\brackets{\norm{\gamma_1}_{C^k(D)},\norm{\gamma_0}_{C^k(D)}}\leq B$,
	\end{enumerate}
	where the norm in (b) is the operator norm and where $\norm{\cdot}_{C^k(D)}$ denotes the usual norm on the space $C^k(D)$ of functions with bounded continuous partial derivatives up to order $k$.   (Note that \cite{Mandache2001} states this with full space $H^{-q}(\partial D)$ in place of the quotient space, but since $\Lambda_{\gamma_j}$ maps constant functions to $0$ for $j=0,1$, the two operator norms coincide.) For $M$ sufficiently large, we may take $k=\alpha$ to deduce (noting that $C^\alpha(D)$ continuously embeds into $H^\alpha(D)$) that there exist $\gamma_0,\gamma_1\in \Gamma^\alpha_{m_0,D_0}(M)$ satisfying (a) and (b).
	Taking $\xi=\xi_{\eps,\delta'}$ we note that (i) holds by definition.
	
	For (ii), applying \cref{lem:ProjectionsApproximateHSOperatorsWell} with $s=0$, $p=\min(d-1,r)$ and $q=(d-1-r)_+\equiv \max(d-1-r,0)=d-1-p$ we see that, for a constant $C=C(d,r)$, 
	\[\norm{\Lambda_{\gamma_1} - \Lambda_{\gamma_0}}_{\HH_r}  \leq C\norm{\Lambda_{\gamma_1}-\Lambda_{\gamma_0}}_{H^{-q}(\partial D)/\CC\to H^q_\diamond(\partial D)}.\] 
	Thus, appealing to \cref{lem:KLdistanceIsHHdistance}, we can bound the KL-divergences $K(p^{\gamma_1}_\eps,p^{\gamma_0}_\eps)$ by 
	\[ \eps^{-2}\norm{\Lambda_{\gamma_1}-\Lambda_{\gamma_0}}_{{\HH_r}}^2 \leq C^2 \eps^{-2}\norm{\Lambda_{\gamma_1}-\Lambda_{\gamma_0}}_{H^{-q}\to H^{q}}^2\leq C^2 \exp\sqbrackets[\big]{2\log(1/\eps)-(\log(1/\eps))^{-\frac{\delta' d}{(2d-1)\alpha}}}.\] Since $\delta'>\alpha(2d-1)/d$ by assumption, the final expression tends to zero as $\eps\to 0$ and in particular is less than the $\mu$ of condition (ii) for $\eps$ small enough.
	
	\appendix

	\section{Laplace--Beltrami eigenfunctions and Sobolev spaces} \label{sec:LaplaceBeltramiEigenfunctionsAndSobolevSpaces}
	In this \namecref{sec:LaplaceBeltramiEigenfunctionsAndSobolevSpaces}, we define the Sobolev spaces $H^r(D),$ $H^r(\partial D)$ for a bounded domain $D\subset \RR^d$, with smooth boundary $\partial D$.
	\begin{definition*}[$H^r(D)$]
		We follow \cite{Lions1972} to define these Sobolev spaces: see Chapter I, Sections 1.1, 9.1 and 12.1 (pages 1, 40 and 70 respectively) for details. For $r\in\NN\cup\braces{0}$ we set \[H^r(D)=\braces{f\in L^2(D) : \norm{f}_{H^r(D)}^2=\sum_{\abs{\alpha}\leq r} \norm{D^\alpha f}_{L^2(D)}^2 <\infty},\] where for a multi-index  $\alpha=(\alpha_1,\dots,\alpha_d),$ $\alpha_j\in\NN\cup\braces{0}$ for $j\leq d$, of order $\abs{\alpha}=\sum_{j\leq d} \alpha_j$, \[ D^\alpha= \frac{\partial^{\abs\alpha}}{\partial x_1^{\alpha_1} \dots \partial x_d^{\alpha_d}},\] with partial derivatives defined in a weak sense. $L^2(D)$ is defined with respect to the Lebesgue measure on $D$. Note $H^r(D)$ is a Hilbert space, with inner product
		\[\ip{f,g}_{H^r(D)}=\sum_{\abs{\alpha}\leq r} \int D^\alpha f \cdot \overline{D^\alpha g}.\]
		For $r\in\RR$, $r\geq 0$ we then define $H^r(D)$ via interpolation. Finally, defining \[H^r_0(D)=\braces{f\in H^r(D) : \tfrac{\partial^j f}{\partial \nu^j}\big|_{\partial D}=0, 0 \le j <r-1/2},\] with the normal boundary derivatives defined in a trace sense, we define $H^{r}(D), r<0,$ as the topological dual space $(H_0^{\abs{r}}(D))^*$, equipped with the dual norm. [Cf.~also \cite{Lions1972} Chapter I, Sections 11.1, 11.4 and 12.1 on pages 55, 62 and 70.] 
	\end{definition*}
	
	For $C^\infty_c(D)$ the space of smooth functions compactly supported in $D$, $H^1_\loc(D)$ is defined as 
	\[ H^1_\loc(D)=\braces{f \text{ locally integrable }: f\phi \in H^1(D) \text{ for all $\phi\in C_c^\infty(D)$}}\] (see \cite{Lions1972}, Chapter II, Section 3.2 on p125), or, equivalently,
	\[H^1_\loc(D)=\braces{f \text{ locally integrable } : f|_U\in H^1(U) \text{ for all domains $U$ satisfying $\bar{U}\subset D$}}.\]
	
	\medskip 
	
	To define the Sobolev space $H^r(\partial D)$ for the compact boundary manifold $\partial D$, let $(\phi_k=\phi_k^{(0)}:k\in\NN\cup\braces{0})$ be an orthonormal basis of $L^2(\partial D)$ consisting of real-valued eigenfunctions of the Laplace--Beltrami operator $\Laplacian_{\partial D}$. The basic properties of such a basis are gathered, for example, in Chavel \cite{Chavel1984}, Chapter I. Let $\lambda_k\ge 0$ be the corresponding eigenvalues: 
	\[ -\Laplacian_{\partial D} \phi_{k} = \lambda_k \phi_k.\] (By convention these are assumed to have been sorted in increasing order.)
	\begin{definition*}[$H^r(\partial D)$]
		For $r\geq 0$, we define 
		\[ H^r(\partial D)= \braces{ f \in L^2(\partial D) \text{ s.t. } \sum_{k=0}^\infty (1+\lambda_k)^r \abs{\ip{f,\phi_k}_{L^2(\partial D)}}^2=: \norm{f}_{H^r(\partial D)}^2 <\infty},\] where the space $L^2(\partial D)$ is defined relative to the surface element on $\partial D$. For $r<0$, we define $H^r(\partial D)$ as the completion of $L^2(\partial D)$ with respect to the norm $\norm{\cdot}_{H^r(\partial D)}$ (defined as in the above display).
	\end{definition*}
	\begin{remarks*}
		\begin{enumerate}[i.]
			\item It is immediate that $\braces{\phi_k:k\in\NN\cup\braces{0})}$ is an orthogonal spanning set of $H^r(\partial D)$, and that setting $\phi_k^{(r)}=(1+\lambda_k)^{-r/2}\phi_k$ yields an orthonormal basis of $H^r(\partial D)$.
			\item This definition of $H^r(\partial D)$ coincides with other possible definitions. For example, for $r=1$ the calculation \[ \int_{\partial D} \Grad \phi_k \cdot \overline{\Grad \phi_l} = -\int_{\partial D} \phi_k \Laplacian_{\partial D} \phi_l = \lambda_l \int_{\partial D} \phi_k\phi_l=\lambda_l \delta_{kl},\] derived via the divergence theorem on a closed manifold (e.g.\ see \cite{Chavel1984} eq (35); note that the manifold $\partial D$ is compact) implies that our definition of $\norm{\cdot}_{H^1(\partial D)}$ is equivalent to the standard definition $\norm{f}_{H^1(\partial D)}^2=\norm{f}_{L^2(\partial D)}^2+\norm{\Grad f}_{L^2(\partial D)}^2,$ and inductively the same is true for $H^r(\partial D)$, $r\in\NN$.
			
			For the equivalence of this definition with some other definitions of negative or non-integer Sobolev spaces, see \cite{Lions1972} Chapter I Section 7.3 (p34-37). In particular note that $H^{-s}(\partial D)$ is the topological dual space of $H^s(\partial D)$ for any $s\in\RR$. 
			
			\item
			Note that $\phi_0$ is a constant function, hence the $H^r(\partial D)/\CC$ norm, defined by $\norm{\sqbrackets{f}}_{H^r(\partial D)/\CC}=\inf_{z\in\CC} \norm{f-z}$ for $\sqbrackets{f}$ the equivalence class over $\CC$ of a function $f\in H^r(\partial D)$, can also be characterised as \begin{equation} \label{eqn:QuotientNormCharacterisation} \norm{\sqbrackets{f}}_{H^r(\partial D)/\CC}^2=\sum_{k=1}^\infty (1+\lambda_k)^r \abs{\ip{f,\phi_k}_{L^2(\partial D)}}^2.\end{equation} Recall also $H^s_\diamond(\partial D)= \braces{g \in H^{s}(\partial D) : \ip{g,1}_{L^2(\partial D)}=0}$ (see \cref{eqn:DefintionOfHsDiamond}). Note that each $\sqbrackets{f}\in H^s(\partial D)/\CC$ has a representative $g\in H^s_\diamond(\partial D)$, and $\norm{f}_{H^s(\partial D)/\CC}=\norm{g}_{H^s(\partial D)}$. 
			We thus use the norm \cref{eqn:QuotientNormCharacterisation} on spaces $H^s(\partial D)/\CC$ and on $H^s_\diamond(\partial D)$ without further mention. We also typically write $f$ for the equivalence class $\sqbrackets{f}$ and only comment further on this where necessary.
		\end{enumerate}
	\end{remarks*}
	
	This `spectral' definition of $H^r(\partial D)$ is useful particularly because Weyl's law explicitly specifies the scaling of $\lambda_k$ as $k\to \infty$.
	
	\begin{lemma}[Weyl's law on a compact closed manifold, e.g.\ \cite{Chavel1984} eq.(49)] \label{lem:WeylsLaw}
		Suppose $M$ is a closed compact manifold of dimension $d$. Then
		\[N(\lambda)=(2\pi)^{-d}\lambda^{d/2}\omega_d\operatorname{Vol}(M)+o(\lambda^{d/2}),\] where $N(\lambda)$ is the number of eigenvalues (counted with multiplicity) no bigger than $\lambda$ and $\omega_d$ is the volume of a unit disc in $\RR^d$. 
	\end{lemma}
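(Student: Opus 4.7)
The plan is to prove Weyl's law via the heat kernel method combined with a Tauberian argument, which gives the stated leading-order asymptotic cleanly on a general closed compact Riemannian manifold $M$.

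First I would set up the heat semigroup. Let $K(t,x,y)$ denote the heat kernel on $M$, i.e.\ the fundamental solution to $\partial_t u = \Laplacian_M u$ with initial data $\delta_y$; the spectral expansion gives
\[ K(t,x,y) = \sum_{k=0}^\infty e^{-\lambda_k t}\phi_k(x)\phi_k(y), \quad t>0,\ x,y\in M.\]
Taking $x=y$ and integrating against the volume form yields the trace identity
\[ Z(t) := \sum_{k=0}^\infty e^{-\lambda_k t} = \int_M K(t,x,x)\,dx.\]
The key analytic input is the short-time on-diagonal asymptotic
\[ K(t,x,x) = (4\pi t)^{-d/2}\brackets[\big]{1 + O(t)}, \quad t\downarrow 0,\]
uniformly in $x\in M$. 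To obtain this I would construct a parametrix in geodesic normal coordinates around each $x$: on a ball where the exponential map is a diffeomorphism, the Riemannian heat kernel differs from the Euclidean heat kernel $(4\pi t)^{-d/2}e^{-\norm{x-y}^2/4t}$ by a remainder controlled using Duhamel's principle, compactness of $M$ and standard estimates on the parametrix error (see \cite{Chavel1984}, Chapter VI).

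Integrating the on-diagonal asymptotic against the volume form and using compactness of $M$ to obtain uniformity gives
\[ Z(t) = (4\pi t)^{-d/2}\operatorname{Vol}(M) + o(t^{-d/2}), \quad t\downarrow 0.\]
Now I would apply Karamata's Tauberian theorem (e.g.\ \cite{Chavel1984}, Chapter XII) to the Stieltjes-type transform: since $Z(t) = \int_0^\infty e^{-\lambda t}\,dN(\lambda)$ and $Z(t)\sim C t^{-d/2}$ as $t\downarrow 0$ with $C=(4\pi)^{-d/2}\operatorname{Vol}(M)$, the conclusion is
\[ N(\lambda) \sim \frac{C}{\Gamma(d/2+1)}\lambda^{d/2}, \quad \lambda\to\infty.\]
Using the identity $\omega_d = \pi^{d/2}/\Gamma(d/2+1)$, a direct calculation converts the constant:
\[ \frac{(4\pi)^{-d/2}}{\Gamma(d/2+1)} = \frac{\omega_d}{(2\pi)^d},\]
which gives exactly $N(\lambda)=(2\pi)^{-d}\omega_d\operatorname{Vol}(M)\lambda^{d/2}+o(\lambda^{d/2})$.

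The main obstacle is the uniform short-time heat-kernel asymptotic: the parametrix construction requires care because the exponential map is only a local diffeomorphism and one must control the off-diagonal contributions to the trace (which are exponentially small in $1/t$ by finite propagation / standard Gaussian bounds, but this must be made precise using the injectivity radius and compactness of $M$). Once that analytic input is in hand, the Tauberian step is standard and the constants fall out as above.
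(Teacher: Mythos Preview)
Your heat-kernel plus Karamata Tauberian argument is a correct and standard route to the leading-order Weyl asymptotic. However, the paper does not give a proof of this lemma at all: it simply states Weyl's law as a background fact and cites \cite{Chavel1984}, eq.~(49). So there is nothing in the paper to compare your argument against; the lemma is quoted from the literature rather than proved. Your sketch is in fact essentially the approach Chavel uses, so if anything you have supplied what the paper deliberately omits.
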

	\begin{corollary}\label{cor:ScalingOfEigValues}
		The eigenvalues of the Laplace--Beltrami operator $\Laplacian_{\partial D}$ satisfy $C_1 k^{2/(d-1)}\leq \lambda_k\leq C_2 k^{2/(d-1)}$ for constants $C_1,C_2$ depending only on $D$. Hence, the eigenfunctions satisfy 
		\begin{equation}\label{eqn:NormPhiKWeyls}
		C_3 (1+k^{\frac{1}{d-1}})^{s-r} \leq \norm{\phi_k^{(r)}}_{H^s(\partial D)}\leq C_4 (1+k^{\frac{1}{d-1}})^{s-r}, ~s,r \in \mathbb R,
		\end{equation} for constants $C_3$ and $C_4$ depending only on $\partial D$ and on the difference $s-r$. For $k>0$ the same expression holds with the quotient norm $\norm{\phi_k^{(r)}}_{H^s(\partial D)/\CC}$ in place of $\norm{\phi_k^{(r)}}_{H^s(\partial D)}$.
	\end{corollary}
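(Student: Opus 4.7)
The plan is to derive both conclusions directly from Weyl's law (the preceding lemma) applied to the closed compact $(d-1)$-dimensional manifold $\partial D$, combined with the spectral characterisation of $\norm{\cdot}_{H^s(\partial D)}$.

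First I would handle the eigenvalue asymptotics. Applied to $\partial D$, \cref{lem:WeylsLaw} gives a counting function $N(\lambda) = c_D \lambda^{(d-1)/2}(1+o(1))$ with $c_D = (2\pi)^{-(d-1)} \omega_{d-1}\operatorname{Vol}(\partial D) > 0$. Since $N(\lambda_k) = k$ up to multiplicities of eigenvalues, inverting this yields $\lambda_k = (k/c_D)^{2/(d-1)}(1+o(1))$ as $k \to \infty$. Combined with the fact that $\lambda_k > 0$ for $k \ge 1$ (so the sequence is bounded below by some positive constant on any finite range, once we discard $\lambda_0 = 0$), this gives constants $C_1, C_2$ depending only on $D$ such that $C_1 k^{2/(d-1)} \le \lambda_k \le C_2 k^{2/(d-1)}$ for all $k \ge 1$.

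Next, the eigenfunction norm bound follows by direct calculation from the definitions. Since $\phi_k^{(r)} = (1+\lambda_k)^{-r/2}\phi_k$ and the $H^s(\partial D)$ norm is defined spectrally by
\[
\norm{f}_{H^s(\partial D)}^2 = \sum_{j\ge 0}(1+\lambda_j)^s |\ip{f,\phi_j}_{L^2(\partial D)}|^2,
\]
and since $\phi_k$ has $L^2(\partial D)$-norm one with $\ip{\phi_k,\phi_j}_{L^2(\partial D)} = \delta_{kj}$, one obtains $\norm{\phi_k^{(r)}}_{H^s(\partial D)}^2 = (1+\lambda_k)^{s-r}$. Using the eigenvalue bounds from the previous step together with the elementary comparison $(1+k^{1/(d-1)})^2 \asymp 1+k^{2/(d-1)} \asymp 1+\lambda_k$ (where the hidden constants depend only on $d$), raising to the power $(s-r)/2$ yields \cref{eqn:NormPhiKWeyls} with constants depending only on $\partial D$ and $s-r$.

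Finally, for the statement about quotient norms, I would invoke remark (iii) above: by \cref{eqn:QuotientNormCharacterisation} the quotient norm agrees with the full $H^s(\partial D)$ norm on any function orthogonal in $L^2(\partial D)$ to the constant $\phi_0$. Since $\phi_k^{(r)}$ is a scalar multiple of $\phi_k$, which is orthogonal to $\phi_0$ for all $k \ge 1$, the two norms coincide for $k > 0$, and the bound transfers without modification. The only mildly delicate point is step one — careful handling of the inversion of Weyl's asymptotic to obtain the two-sided bound uniformly in $k$ (rather than merely asymptotically) — but this is routine since one can absorb the contribution from any finite initial segment into the constants.
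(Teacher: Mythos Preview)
Your proposal is correct and follows essentially the same route as the paper: apply Weyl's law on the $(d-1)$-dimensional manifold $\partial D$, invert the asymptotic for $N(\lambda)$ to obtain the two-sided eigenvalue bounds, then compute $\norm{\phi_k^{(r)}}_{H^s(\partial D)}^2=(1+\lambda_k)^{s-r}$ directly from the spectral definition and substitute. The paper handles the multiplicity issue via $N(\lambda_k^-)\le k\le N(\lambda_k^+)$ rather than your phrasing ``$N(\lambda_k)=k$ up to multiplicities'', but this is a cosmetic difference and you correctly flag it as the only point requiring care.
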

	\begin{proof}
		We apply Weyl's law on the manifold $\partial D$, which has dimension $d-1$.
		Writing $N(\lambda^{-})$ for $\lim_{x\uparrow \lambda} N(x)$ and $N(\lambda^{+})$ for $\lim_{x\downarrow \lambda} N(x)$, we thus have
		\[ N(\lambda_k^{-})\leq k \leq N(\lambda_k^{+}).\]
		It follows that $C \lambda_k^{(d-1)/2}+o(\lambda_k^{(d-1)/2}) \leq k \leq C\lambda_k^{(d-1)/2}+o(\lambda_k^{(d-1)/2})$ for the constant $C=C(D)=(2\pi)^{-(d-1)}\omega_{d-1}\operatorname{Area}(\partial D)$ and hence we deduce the scaling of the eigenvalues. Then \cref{eqn:NormPhiKWeyls} follows from the definition of $H^r(\partial D)$ and the remarks thereafter. \qedhere
	\end{proof}
	
	\section{Comparison results for Hilbert--Schmidt operators}\label{sec:ComparisonResultsForHilbertSchmidtOperators}
	
	For separable Hilbert spaces $A$ and $B$, we use the notation $\Ll(A,B)$ for the space of bounded linear maps $A\to B$ equipped with the operator norm $\norm{\cdot}_{A\to B}$, and $\Ll_2(A,B)$ for the space of Hilbert--Schmidt operators $A\to B$ equipped with inner product $\ip{\cdot,\cdot}_{\Ll_2(A,B)}$ (e.g.\ see \cite[Chapter 12]{Aubin2011}). Define the orthonormal basis $(b_{jk}^{(p,q)})$ of $\Ll_2(H^p,H^q)$ by \[{b_{jk}^{(p,q)}(f)=\phi_j^{(p)}\otimes \phi_k^{(q)}(f)=\ip{f ,\phi_j^{(p)}}_{H^p}\phi_k^{(q)}, \quad j,k\in \NN,}\] (in this \namecref{sec:ComparisonResultsForHilbertSchmidtOperators} we omit explicit reference to the domain, writing $H^p$ for either $H^p(\partial D)/\CC$ or $H^p_\diamond(\partial D)$; as in remark (iii) in \cref{sec:LaplaceBeltramiEigenfunctionsAndSobolevSpaces}, both spaces can be identified with $\overline{\Span}\braces{\phi_k^{(p)} : k\geq 1}$, hence the omission should not cause confusion). The compatibility between our bases of $H^\alpha(\partial D)$ for different $\alpha\in\RR$ means that the subspaces spanned by $(b_{jk}^{(p,q)})_{j\leq J, k\leq K}$ coincide for all $p$ and $q$, and the $\Ll_2(H^p,H^q)$-orthogonal projections onto this subspace coincide with $\pi_{JK}$ (as defined in \cref{eqn:def:PiJK}). \Cref{cor:ScalingOfEigValues} implies the following lemmas controlling Hilbert--Schmidt norms for different domains and codomains in terms of each other, and in terms of operator norms.

	\begin{lemma} 
		\label{lem:EquivalenceOfProjectedNorms} Let $p,q,r,s\in\RR$ and let $T\in \Span\braces{b_{jk}^{(p,q)}:1\leq j\leq J,~1\leq k\leq K}$. Then there is a constant $C$ depending only on $D$ and on the differences $r-p,$ $s-q$ such that \[ \norm{T}_{\Ll_2(H^r,H^s)} \leq C (1+J^{1/(d-1)})^{(p-r)_+} (1+K^{1/(d-1)})^{(s-q)_+} \norm{T}_{\Ll_2(H^p,H^q)},\] where $x_+=\max(x,0)$ for $x\in\RR$.
	\end{lemma}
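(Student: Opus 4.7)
The plan is a direct computation in coordinates. Write $T = \sum_{j\le J,\,k\le K} t_{jk}\, b_{jk}^{(p,q)}$ with respect to the orthonormal basis $(b_{jk}^{(p,q)})$ of $\Ll_2(H^p,H^q)$, so that $\norm{T}_{\Ll_2(H^p,H^q)}^2 = \sum_{j,k} \abs{t_{jk}}^2$. I then want to compute the coefficient $\ip{T\phi_m^{(r)},\phi_n^{(s)}}_{H^s}$ of $T$ in the basis $(b_{mn}^{(r,s)})$ of $\Ll_2(H^r,H^s)$, so that $\norm{T}_{\Ll_2(H^r,H^s)}^2 = \sum_{m,n} \abs{\ip{T\phi_m^{(r)},\phi_n^{(s)}}_{H^s}}^2$.

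The key observation is that the change of basis is diagonal. Using $\phi_\ell^{(\alpha)} = (1+\lambda_\ell)^{-\alpha/2}\phi_\ell$ and the spectral definition $\ip{f,g}_{H^\alpha} = \sum_\ell (1+\lambda_\ell)^\alpha \ip{f,\phi_\ell}_{L^2}\overline{\ip{g,\phi_\ell}_{L^2}}$, a routine calculation gives
\[
\ip{\phi_m^{(r)},\phi_j^{(p)}}_{H^p} = (1+\lambda_m)^{(p-r)/2}\,\delta_{mj}, \qquad \ip{\phi_k^{(q)},\phi_n^{(s)}}_{H^s} = (1+\lambda_k)^{(s-q)/2}\,\delta_{kn}.
\]
Applying $T$ to $\phi_m^{(r)}$ and pairing with $\phi_n^{(s)}$ in $H^s$ therefore yields
\[
\ip{T\phi_m^{(r)},\phi_n^{(s)}}_{H^s} = t_{mn}\,(1+\lambda_m)^{(p-r)/2}(1+\lambda_n)^{(s-q)/2},
\]
which is valid (and equal to zero) for $m>J$ or $n>K$ since the support of $(t_{jk})$ is within the prescribed window.

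To finish, I will square and sum to obtain
\[
\norm{T}_{\Ll_2(H^r,H^s)}^2 = \sum_{m\le J,\,n\le K} \abs{t_{mn}}^2\,(1+\lambda_m)^{p-r}(1+\lambda_n)^{s-q},
\]
and bound the weights uniformly. By \cref{cor:ScalingOfEigValues}, $(1+\lambda_m)\asymp (1+m^{1/(d-1)})^{2}$, so for $m\le J$ one has $(1+\lambda_m)^{p-r}\le C(1+J^{1/(d-1)})^{2(p-r)_+}$ (if $p\ge r$ use monotonicity up to $m=J$; if $p<r$, the exponent is negative and the factor is $\le 1$, which is also $\le (1+J^{1/(d-1)})^{0}$). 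The analogous bound in $K$ applies to $(1+\lambda_n)^{s-q}$. Pulling these supremal factors out of the sum gives
\[
\norm{T}_{\Ll_2(H^r,H^s)}^2 \le C^2 (1+J^{1/(d-1)})^{2(p-r)_+}(1+K^{1/(d-1)})^{2(s-q)_+}\sum_{m,n}\abs{t_{mn}}^2,
\]
and taking square roots yields the claim. There is no real obstacle — the only mild care is in splitting the cases $p\gtreqless r$ and $s\gtreqless q$ so that the truncation range $j\le J,\,k\le K$ is used exactly when the weight is increasing in its index, which is precisely what the $(\cdot)_+$ encodes.
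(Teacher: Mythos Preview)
Your proof is correct and follows essentially the same route as the paper: both compute the coefficients of $T$ in the $(b_{mn}^{(r,s)})$ basis, observe via the spectral definitions that the change of basis is diagonal with weights $(1+\lambda_m)^{(p-r)/2}(1+\lambda_n)^{(s-q)/2}$, and then bound these weights uniformly over $m\le J,\ n\le K$ using Weyl's law (\cref{cor:ScalingOfEigValues}), absorbing the case of negative exponent into the $(\cdot)_+$. Your write-up is in fact slightly more explicit than the paper's in deriving the diagonal relation.
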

	
	\begin{proof}
		The coefficients $a_{jk}^{(r,s)}$ of $T$ with respect to the basis $(b_{jk}^{(r,s)})$ are given by  \[a_{jk}^{(r,s)}=\ip{T,b_{jk}^{(r,s)}}_{\Ll_2(H^r,H^s)}\equiv\sum_{p} \ip{T\phi_p^{(r)},b_{jk}^{(r,s)}(\phi_p^{(r)})}_{H^s}=\ip{T\phi_j^{(r)},\phi_k^{(s)}}_{H^s}\]  and we see from \cref{cor:ScalingOfEigValues} that \begin{equation}\label{eqn:ajkrs} \abs{a_{jk}^{(r,s)}} \leq C(1+j^{1/(d-1)})^{p-r}(1+k^{1/(d-1)})^{s-q}\abs{a_{jk}^{(p,q)}}\end{equation} for a constant $C$ depending only on $D$ and the differences $r-p,s-q$.
		
		Upper bounding $(1+j^{1/(d-1)})^{(p-r)}\leq (1+J^{1/(d-1)})^{(p-r)_+}$ for $j\leq J$, and similarly for $k$, we find that
		\begin{equation*}\norm{T}_{\Ll_2(H^r,H^s)}^2= \sum_{j\leq J,k\leq K} \abs{a_{jk}^{(r,s)}}^2 \leq C(1+J^{1/(d-1)})^{2(p-r)_+} (1+K^{1/(d-1)})^{2(s-q)_+} \norm{T}_{\Ll_2(H^p,H^q)}^2,\end{equation*}
		hence the result. \qedhere
	\end{proof}
	
	\begin{lemma} \label{lem:ProjectionsApproximateHSOperatorsWell} For $p,q,r,s\in \RR$ satisfying $p\leq r$ and $q\geq s$, let $T\in \Ll(H^{p-(d-1)},H^q)$. Then we have $T \in \Ll_2(H^r,H^s)$ with, for constant $C$ depending only on $D$ and the differences $r-p$, $q-s$,
		\begin{equation*}\label{eqn:OperatorNormControlsHSnorm(SobolevEmbedding?)}\norm{T}_{\Ll_2(H^r,H^s)} \leq C\norm{T}_{H^{p-(d-1)}\to H^{q}}, \end{equation*} 
		and moreover
		\begin{equation*}\label{eqn:ApproximationByProjectionBoundaryTerms}\norm{T-\pi_{JK}T}_{\Ll_2(H^r,H^s)} \leq C\norm{T}_{H^{p-(d-1)}\to H^{q}} \max\brackets[\big]{(1+J^{1/(d-1)})^{p-r},(1+K^{1/(d-1)})^{s-q}}. \end{equation*} 
	\end{lemma}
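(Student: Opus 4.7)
The plan is to compute $\norm{T}_{\Ll_2(H^r,H^s)}^2$ directly through the matrix coefficients $a_{jk}^{(r,s)}:=\ip{T\phi_j^{(r)},\phi_k^{(s)}}_{H^s}$. The spectral definitions of $H^s$ and $H^q$ yield $\phi_k^{(s)}=(1+\lambda_k)^{(q-s)/2}\phi_k^{(q)}$ and, for $f\in H^q$, the companion identity $\ip{f,\phi_k^{(s)}}_{H^s}=(1+\lambda_k)^{(s-q)/2}\ip{f,\phi_k^{(q)}}_{H^q}$; applied with $f=T\phi_j^{(r)}$ this gives
\[ a_{jk}^{(r,s)}=(1+\lambda_k)^{(s-q)/2}\ip{T\phi_j^{(r)},\phi_k^{(q)}}_{H^q}. \]
Since $q\geq s$ the prefactor is bounded by $1$, and Parseval in the $H^q$-orthonormal basis $(\phi_k^{(q)})_{k\in\NN}$ combined with the operator-norm inequality gives
\[ \sum_{k\geq 1}|a_{jk}^{(r,s)}|^2\leq \norm{T\phi_j^{(r)}}_{H^q}^2\leq \norm{T}_{H^{p-(d-1)}\to H^q}^2\,\norm{\phi_j^{(r)}}_{H^{p-(d-1)}}^2. \]

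Next I would invoke \cref{cor:ScalingOfEigValues} to bound $\norm{\phi_j^{(r)}}_{H^{p-(d-1)}}\leq C(1+j^{1/(d-1)})^{p-(d-1)-r}$. Because $p\leq r$, the exponent, divided by $d-1$, is at most $-2$, so the series $\sum_{j\geq 1}(1+j^{1/(d-1)})^{2(p-(d-1)-r)}$ converges; summing over $j$ produces the first inequality $\norm{T}_{\Ll_2(H^r,H^s)}\leq C\norm{T}_{H^{p-(d-1)}\to H^q}$.

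For the truncation estimate I would split
\[ \norm{T-\pi_{JK}T}_{\Ll_2(H^r,H^s)}^2=\sum_{j>J}\sum_{k\geq 1}|a_{jk}^{(r,s)}|^2 + \sum_{j\leq J}\sum_{k>K}|a_{jk}^{(r,s)}|^2, \]
and treat the two pieces separately. The first is handled exactly as above but with the $j$-sum restricted to $j>J$; a standard tail estimate on the convergent series yields a bound of order $(1+J^{1/(d-1)})^{2(p-r)}$ (the harmless extra $J^{-1}$ factor from the tail is absorbed). For the second piece, the monotonicity of $(1+\lambda_k)^{s-q}$ in $k$ (using $q\geq s$) together with \cref{cor:ScalingOfEigValues} applied at $k=K$ allows me to extract the uniform factor $(1+K^{1/(d-1)})^{2(s-q)}$ from the $k$-sum, while the remaining $j$-sum is bounded by a constant as in the first part. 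Taking square roots of the sum of the two bounds and using $\sqrt{a+b}\leq\sqrt 2\max(\sqrt a,\sqrt b)$ delivers the stated inequality.

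The only nontrivial point is the exponent bookkeeping: the hypothesis $p\leq r$ must produce enough decay of $\norm{\phi_j^{(r)}}_{H^{p-(d-1)}}$ to offset the Weyl-law growth of the eigenvalue counting function on the $(d-1)$-dimensional manifold $\partial D$, and the hypothesis $q\geq s$ furnishes the requisite negative power of $(1+\lambda_k)$. With these in hand the remaining steps are routine manipulations of convergent series.
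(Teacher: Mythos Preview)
Your argument is correct and follows essentially the same approach as the paper. The paper organises the bookkeeping slightly differently—it first shows $T\in\Ll_2(H^p,H^q)$ and then reuses the coefficient comparison $\abs{a_{jk}^{(r,s)}}\leq C(1+j^{1/(d-1)})^{p-r}(1+k^{1/(d-1)})^{s-q}\abs{a_{jk}^{(p,q)}}$ established in the preceding lemma, whereas you work directly with the $(r,s)$-coefficients and the operator norm; and the paper splits the tail as $\{j>J\}\cup\{k>K\}$ rather than your disjoint partition—but the essential ingredients (Parseval, the operator-norm bound, and the Weyl-law decay of $\norm{\phi_j^{(r)}}_{H^{p-(d-1)}}$) are identical.
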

	
	\begin{proof}
		Firstly, as a consequence of \cref{cor:ScalingOfEigValues}, we have, for a constant $C=C(D)$, \begin{equation} \label{EIT-eqn:LambdaEjrBound}\norm{T \phi_j^{(p)}}_{H^q}^2 \leq \norm{T}_{H^{p-(d-1)}\to H^q}^2 \norm{\phi_j^{(p)}}_{H^{p-(d-1)}}^2\leq C \norm{T}_{H^{p-(d-1)}\to H^q}^2 (1+j^{1/(d-1)})^{-2(d-1)},\end{equation} 
		which is summable over $j$, hence by definition of the space of Hilbert--Schmidt operators, $T\in \Ll_2(H^p,H^q)$ and $\norm{T}_{\Ll_2(H^p,H^q)}\leq C'\norm{T}_{H^{p-(d-1)}\to H^q}$. That $T$ lies in $\Ll_2(H^r,H^s)$ and satisfies the the specified bound follows by monotonicity of $H^\alpha$ norms. 
		
		Since the $\Ll_2(H^r,H^s)$-orthogonal projection maps coincide for all $r$ and $s$, next defining $a_{jk}^{(r,s)}$ as in the previous proof, we have from \cref{eqn:ajkrs} that for a constant $C$, \begin{align*}\norm{T-\pi_{JK}T}_{\Ll_2(H^r,H^s)}^2 &=\sum_{j>J \text{ or } k>K} \abs{a_{jk}^{(r,s)}}^2  \\ &\leq C \sum_{j>J \text{ or } k>K} (1+j^{1/(d-1)})^{2(p-r)}(1+k^{1/(d-1)})^{2(s-q)}\abs{a_{jk}^{(p,q)}}^2 
		\end{align*} 
		
		Since $p\leq r$ and $q\geq s$, we see that
		\begin{align*}\sum_{j>J}\sum_k (1+j^{1/(d-1)})^{2(p-r)}(1+k^{1/(d-1)})^{2(s-q)}\abs{a_{jk}^{(p,q)}}^2 &\leq (1+J^{1/(d-1)})^{2(p-r)} \sum_{j>J}\sum_k \abs{a_{jk}^{(p,q)}}^2 \\ &\leq (1+J^{1/(d-1)})^{2(p-r)}\norm{T}_{\Ll_2(H^p,H^q)}^2. \end{align*}
		Arguing similarly for the sum over all $j$ and over $k>K$, we deduce that 
		\[\norm{T-\pi_{JK}T}_{\Ll_2(H^r,H^s)}^2 \leq 2C\norm{T}_{\Ll_2(H^p,H^q)}^2 \max\brackets[\big]{ (1+J^{1/(d-1)})^{2(p-r)},  (1+K^{1/(d-1)})^{2(s-q)}}. \]
		The result follows.\qedhere
	\end{proof}

	\section{Mapping properties of $\Lambda_\gamma$ and $\tilde{\Lambda}_\gamma$}\label{sec:MappingPropertiesOfLambda_gamma}
	In this \namecref{sec:MappingPropertiesOfLambda_gamma} we prove the following mapping properties of $\Lambda_\gamma$ and $\tilde{\Lambda}_\gamma$ which were used throughout the main body of the paper. The proofs rely on basic theory for elliptic PDEs, which can be found in the many books on the subject (we shall refer mostly to \cite{Lions1972}).
	\begin{lemma}
		\label{lem:DirichletProblemHasUniqueSolution}
		Let $s\in\RR$, let $m\in (0,1)$ and let $D'$ be a domain compactly contained in $D$. For $\gamma\in \Gamma_{m,D'}$ and $f\in H^{s+1}(\partial D)/\CC$, there is a unique weak solution $u_{\gamma,f}\in \Hh_s=\brackets[\big]{H^{\min\braces{1,s+3/2}}(D)\cap H^1_\loc (D)}/\CC$ to the Dirichlet problem \cref{eqn:DirichletProblem}. Moreover, if $u_{1,f}$ is the unique solution when $\gamma=1$, then for any other $\gamma\in\Gamma_{m,D'}$ bounded by $M$ on $D$, $u_{\gamma,f}-u_{1,f}$ lies in $H^1_0(D)/\CC$ and satisfies the estimate
		\begin{equation}\label{eqn:BoundOnNormW}
		\norm{u_{\gamma,f}-u_{1,f}}_{H^1(D)/\CC}\leq C{\tfrac{M}{m}}\norm{f}_{H^{s+1}(\partial D)/\CC},
		\end{equation}
		for some constant $C=C(D,D',s)$.
	\end{lemma}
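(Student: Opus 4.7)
The plan is to reduce the general problem to the Laplace case $\gamma=1$ by writing $u_{\gamma,f}=u_{1,f}+w$ and solving a Lax--Milgram problem for a perturbation $w\in H^1_0(D)/\CC$. For $\gamma=1$, existence, uniqueness and the bound $\norm{u_{1,f}}_{H^{\min\{1,s+3/2\}}(D)/\CC}\leq C\norm{f}_{H^{s+1}(\partial D)/\CC}$ are classical results on the Dirichlet problem for the Laplacian on smooth bounded domains (cf.\ \cite{Lions1972}, Chapter II). Since $u_{1,f}$ is harmonic in $D$, interior elliptic regularity yields
\begin{equation*}
\norm{\Grad u_{1,f}}_{L^2(D')}\leq C_{D,D',s}\norm{f}_{H^{s+1}(\partial D)/\CC}.
\end{equation*}

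For $\gamma\in\Gamma_{m,D'}$, substituting $u_{\gamma,f}=u_{1,f}+w$ into \cref{eqn:DirichletProblemWeakFormulation} leads to the requirement that $w\in H^1_0(D)/\CC$ satisfy
\begin{equation*}
\int_D \gamma\,\Grad w\cdot\Grad\overline v = -\int_D (\gamma-1)\Grad u_{1,f}\cdot\Grad\overline v,\qquad \forall v\in H^1_0(D).
\end{equation*}
The right-hand side is a bounded linear functional on $H^1_0(D)$ because $\gamma-1$ is supported in $D'$ and $\Grad u_{1,f}\in L^2(D')$. The bilinear form on the left is bounded on $H^1_0(D)$ and, since $\gamma\geq m>0$, coercive via Poincar\'e. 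Lax--Milgram then produces a unique $w\in H^1_0(D)/\CC$. The function $u_{\gamma,f}:=u_{1,f}+w$ lies in $\Hh_s$ (since $H^1_0(D)\subset H^{\min\{1,s+3/2\}}(D)$ for bounded $D$) and a short calculation using $\Div(\Grad u_{1,f})=0$ weakly shows it is a weak solution of \cref{eqn:DirichletProblem}.

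For the estimate, testing the weak equation for $w$ against $w$ itself and using $|\gamma-1|\leq 2M$,
\begin{equation*}
m\norm{\Grad w}_{L^2(D)}^2 \leq 2M\norm{\Grad u_{1,f}}_{L^2(D')}\norm{\Grad w}_{L^2(D)},
\end{equation*}
whence $\norm{\Grad w}_{L^2(D)}\leq (2M/m)\norm{\Grad u_{1,f}}_{L^2(D')}$, and \cref{eqn:BoundOnNormW} follows from Poincar\'e's inequality on $H^1_0(D)$ together with the interior regularity bound above.

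The delicate point is uniqueness in $\Hh_s$, especially for $s<-1/2$ when $\Hh_s$ is strictly larger than $H^1(D)/\CC$. Given another weak solution $\tilde u\in\Hh_s$, set $\tilde w=\tilde u-u_{1,f}$. On the annular region $D\setminus\bar{D'}$ we have $\gamma\equiv 1$, so $\tilde w$ is weakly harmonic there, with zero trace on $\partial D$ (both $\tilde u$ and $u_{1,f}$ have trace $f$, interpreted via the $H^{s+3/2}(D)$ component of $\Hh_s$) and with $H^{1/2}$ trace on $\partial D'$ inherited from $\tilde w\in H^1_\loc(D)$. Boundary regularity for the harmonic Dirichlet problem on the smooth annular domain $D\setminus\bar{D'}$ then gives $\tilde w\in H^1(D\setminus\bar{D'})$ with vanishing trace on $\partial D$; combining this with $\tilde w\in H^1_\loc(D)$ through a partition of unity subordinate to $\{D\setminus\bar{D'},\,D''\}$ (for some $D''$ with $\bar{D'}\subset D''\subset\subset D$) yields $\tilde w\in H^1_0(D)/\CC$. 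It then satisfies the same Lax--Milgram problem as $w$, forcing $\tilde w=w$ and hence $\tilde u=u_{\gamma,f}$. The main obstacle is precisely this patching step that upgrades the a priori low global regularity of $\tilde w$ to membership in $H^1_0(D)/\CC$.
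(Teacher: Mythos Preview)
Your proof follows essentially the same route as the paper's: reduce to the Laplace case, set up a Lax--Milgram problem for $w=u_{\gamma,f}-u_{1,f}\in H^1_0(D)$ with right-hand side supported in $D'$, bound that functional via interior regularity of the harmonic $u_{1,f}$, and deduce the estimate by testing against $w$. The paper writes the functional as $A(v)=-\int_D\gamma\,\Grad u_{1,f}\cdot\Grad\overline v$ and then reduces it to $\int_{D'}(1-\gamma)\Grad u_{1,f}\cdot\Grad\overline v$ via the divergence theorem, whereas you pass directly to the $(\gamma-1)$ form using harmonicity of $u_{1,f}$; these are the same manipulation.

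On uniqueness for $s<-1/2$, the paper simply defers to \cite{Hanke2011}, Theorem~A.2, while you sketch the argument. Your outline is correct in spirit and is in fact what the cited reference does, but be aware that the step ``boundary regularity for the harmonic Dirichlet problem on $D\setminus\bar{D'}$ gives $\tilde w\in H^1$'' implicitly invokes uniqueness of \emph{very weak} harmonic solutions on the annulus (one knows the $H^1$ solution with data $0$ on $\partial D$ and $\tilde w|_{\partial D'}\in H^{1/2}(\partial D')$ exists, but identifying it with $\tilde w$ requires uniqueness in the low-regularity class $H^{s+3/2}$). That is a standard transposition/Lions--Magenes fact, but it is precisely the content being black-boxed, so a reference (e.g.\ \cite{Lions1972}, Chapter~II) would be appropriate there.
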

	\cref{lem:DirichletProblemHasUniqueSolution} is neither novel nor necessarily sharp, but we require explicit bounds controlling how the constants depend on $\gamma$ for our proofs, which is why the result is given here.

	\begin{lemma}\label{lem:LambdaGammaTakesOneDerivative}\label{lem:tildeLambdaInfinitelySmoothing}
		For some $m\in (0,1)$ and some domain $D'$ compactly contained in $D$, let $\gamma\in\Gamma_{m,D'}$. For each $s\in\RR$, $\Lambda_\gamma$ is a continuous linear map from $H^{s+1}(\partial D)/\CC$ to $H^s_\diamond (\partial D)$, and it is continuously invertible. For each $s,t\in\RR$, the shifted operator $\tilde{\Lambda}_\gamma=\Lambda_\gamma-\Lambda_1$ is a continuous map from $H^s(\partial D)/\CC$ to $H^t_\diamond(\partial D)$.
		
		Moreover, if $\gamma$ also satisfies the bound $\norm{\gamma}_{\infty}\leq M$, then we have the explicit bounds  \begin{align}\label{eqn:NormLambdaGammaBound} \norm{\Lambda_\gamma}_{H^{s+1}(\partial D)/\CC\to H^s(\partial D)}\leq C_1 \frac{M}{m}, \\ \label{eqn:NormTildeLambdaGammaBound} \norm{\tilde{\Lambda}_\gamma}_{H^s(\partial D)/\CC \to H^t(\partial D)}\leq C_2 {\frac{M}{m}},\end{align} for constants $C_1=C(D,D',s)$ and $C_2=C_2(D,D',s,t)$.
	\end{lemma}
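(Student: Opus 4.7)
The plan is to decompose $\Lambda_\gamma = \Lambda_1 + \tilde\Lambda_\gamma$ and handle each summand separately. The continuity of $\Lambda_1 : H^{s+1}(\partial D)/\CC \to H^s_\diamond(\partial D)$ for every $s \in \RR$ is classical: $\Lambda_1$ is a first-order elliptic pseudodifferential operator on $\partial D$ with the same principal symbol as $(-\Delta_{\partial D})^{1/2}$. The main work then lies in establishing the infinite smoothing of $\tilde\Lambda_\gamma$ with the correct operator-norm scaling in $M/m$; the claimed bound on $\Lambda_\gamma$ follows by the triangle inequality, using that $M/m \geq 1$ (indeed $M \geq 1$ because $\gamma \equiv 1$ off $D'$, and $m \in (0,1)$).

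The main obstacle is the infinite smoothing of $\tilde\Lambda_\gamma$. The driving structural fact is that $\gamma - 1$ is supported in $\bar{D'}$, where $D'$ is compactly contained in $D$, so $w := u_{\gamma,f} - u_{1,f}$ is harmonic in a neighbourhood of $\partial D$ with zero trace there, while globally
\[ \Div(\gamma \Grad w) = -\Div\bigl((\gamma-1)\Grad u_{1,f}\bigr) \text{ in } D, \qquad w|_{\partial D} = 0, \]
an equation whose right-hand side is supported in $\bar{D'}$. Harmonic functions in $D$ admit the Poisson kernel representation $u_{1,f}(x) = \langle P(x,\cdot), f\rangle_{\partial D}$ with $P(x,\cdot) \in C^\infty(\partial D)$ for every interior $x$, and differentiating under the duality pairing yields, for every compact $K \Subset D$ and every $k \in \NN$, $s \in \RR$, the smoothing estimate $\norm{u_{1,f}}_{C^k(K)} \leq C(K,k,s)\norm{f}_{H^s(\partial D)/\CC}$. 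Applying this with $K \supset \bar{D'}$ and using $\norm{\gamma-1}_\infty \leq 2M$ gives $\norm{(\gamma-1)\Grad u_{1,f}}_{L^2(D)} \leq CM\norm{f}_{H^s/\CC}$; the energy estimate (test with $w$, use $\gamma \geq m$) then produces $\norm{w}_{H^1_0(D)} \leq C(M/m)\norm{f}_{H^s/\CC}$. Finally, fix an open $U$ with $\partial D \subset U$ and $\bar U \cap \bar{D'} = \emptyset$; on $U$, $\gamma \equiv 1$, so $w$ is harmonic with zero Dirichlet data on $\partial D$, and standard boundary regularity for the Laplacian (applied locally after flattening the boundary) gives $\norm{w}_{C^{k+1}(V)} \leq C(k)\norm{w}_{L^2(U)}$ for any $V$ with $\partial D \subset V \Subset U \cup \partial D$ and any $k \in \NN$. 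Taking normal traces yields $\norm{\tilde\Lambda_\gamma f}_{C^k(\partial D)} \leq C(k)(M/m)\norm{f}_{H^s/\CC}$, which upgrades to continuity $H^s(\partial D)/\CC \to H^t_\diamond(\partial D)$ for every $s,t \in \RR$ via Sobolev embedding on the compact manifold $\partial D$; the image lies in $H^t_\diamond$ by the usual mean-zero computation ($\int_{\partial D}\Lambda_\gamma f = \int_D \Div(\gamma \Grad u_{\gamma,f}) = 0$, likewise for $\Lambda_1$).

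For continuous invertibility of $\Lambda_\gamma$: injectivity is immediate from Green's identity, $\int_D \gamma \abs{\Grad u_{\gamma,f}}^2 = \langle \Lambda_\gamma f, f\rangle_{\partial D}$, which forces $u_{\gamma,f}$ constant (hence $f \equiv 0$ in the quotient) whenever $\Lambda_\gamma f = 0$. Surjectivity onto $H^s_\diamond$ reduces to solvability of the Neumann problem $\Div(\gamma \Grad u) = 0$, $\gamma \partial_\nu u = g$ for $g \in H^s_\diamond$: for $s = -1/2$, the mean-zero condition on $g$ is the standard compatibility condition and Lax--Milgram on $H^1(D)/\CC$ (coercive there by Poincar\'e and $\gamma \geq m$) produces a unique solution; for $s > -1/2$ one bootstraps via boundary elliptic regularity for the Laplacian near $\partial D$ (again using $\gamma \equiv 1$ there); for $s < -1/2$ one extends by $L^2$-self-adjointness duality, $\Lambda_\gamma^* = \Lambda_\gamma$. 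Setting $f = u|_{\partial D}$ produces the required preimage in $H^{s+1}(\partial D)/\CC$, and the Banach open mapping theorem yields continuity of the inverse.
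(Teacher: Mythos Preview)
Your argument is correct and follows the same overall strategy as the paper: decompose $\Lambda_\gamma = \Lambda_1 + \tilde\Lambda_\gamma$, obtain the $H^1_0$ energy estimate $\norm{w}_{H^1(D)} \leq C(M/m)\norm{f}_{H^s(\partial D)/\CC}$ for $w = u_{\gamma,f} - u_{1,f}$ (this is precisely \cref{eqn:BoundOnNormW} from \cref{lem:DirichletProblemHasUniqueSolution}), and then exploit harmonicity of $w$ in a collar neighbourhood of $\partial D$ to upgrade to arbitrary smoothness of the normal trace. The technical execution differs: the paper works entirely in Sobolev spaces, applying Lions--Magenes trace theorems on the annulus $D \setminus \bar\Omega$ together with \cref{lem:InteriorSmoothnessHarmonicFunctions} to pass from $\norm{w}_{H^1}$ to $\norm{\partial w/\partial\nu}_{H^t(\partial D)}$ via intermediate traces on $\partial\Omega$; you instead invoke the Poisson kernel for the interior smoothing of $u_{1,f}$ and classical Schauder-type $C^k$ boundary regularity for the harmonic $w$ with zero Dirichlet data, recovering Sobolev bounds only at the end by embedding on the compact manifold $\partial D$. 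Your route is arguably more elementary in that it avoids the fractional/negative Sobolev machinery on the annular domain, at the cost of the black box of $\Lambda_1$ being a first-order elliptic $\Psi$DO (the paper handles $\Lambda_1$ directly via the same trace theorems). For the invertibility, both arguments reduce to solvability of the Neumann problem; the paper simply cites the Neumann-to-Dirichlet analogue from \cite{Hanke2011}, whereas you sketch the Lax--Milgram and duality argument explicitly.
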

	
	Given \cref{lem:tildeLambdaInfinitelySmoothing}, the following is an immediate consequence of \cref{lem:ProjectionsApproximateHSOperatorsWell} (recall $\HH_r$ is defined in \cref{eqn:HHrDefinition}, and note that the proofs of the previous lemmas imply that $\tilde \Lambda_\gamma$ maps real-valued functions to real-valued valued functions). 
	\begin{lemma}\label{lem:tildeLambdaIsHilbertSchmidt} For any $r\in\RR$ and any $\gamma\in\Gamma_{m,D'}$, $\tilde{\Lambda}_\gamma\in\HH_r$.
	\end{lemma}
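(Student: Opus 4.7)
The plan is to deduce this as a direct corollary of the mapping properties of $\tilde\Lambda_\gamma$ already established in Lemma \ref{lem:tildeLambdaInfinitelySmoothing} together with the Sobolev-comparison Lemma \ref{lem:ProjectionsApproximateHSOperatorsWell}. The idea is that $\tilde\Lambda_\gamma$ is an \emph{infinitely smoothing} operator on the boundary (it maps any $H^s(\partial D)/\CC$ continuously into any $H^t_\diamond(\partial D)$), while the Laplace--Beltrami eigenfunctions $\phi_j^{(r)}$ decay fast enough in any weaker norm to make the Hilbert--Schmidt series convergent.

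Concretely, I would fix $r\in\RR$ and apply Lemma \ref{lem:ProjectionsApproximateHSOperatorsWell} with the parameters $p=r$, $q=0$ and $s=0$ (so that $p\le r$ and $q\ge s$ are satisfied trivially, with equality). The hypothesis of that lemma then requires $\tilde\Lambda_\gamma\in\Ll(H^{r-(d-1)}(\partial D)/\CC,L^2_\diamond(\partial D))$, which is exactly the content of Lemma \ref{lem:tildeLambdaInfinitelySmoothing} applied with $s=r-(d-1)$ and $t=0$ (using the bound \cref{eqn:NormTildeLambdaGammaBound} to get an explicit continuity constant in terms of $m$, $M$, $D$ and $D'$, though the constant plays no role here). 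The conclusion of Lemma \ref{lem:ProjectionsApproximateHSOperatorsWell} is then precisely that $\tilde\Lambda_\gamma\in\Ll_2(H^r(\partial D)/\CC,L^2_\diamond(\partial D))$, i.e. $\tilde\Lambda_\gamma$ is Hilbert--Schmidt with respect to the basis $(b_{jk}^{(r)})$.

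The only point needing a brief remark is the real-versus-complex subtlety flagged in the remark at the start of Section \ref{sec:Proofs}: the space $\HH_r$ consists of real Hilbert--Schmidt operators between the real-valued subsets of $H^r(\partial D)/\CC$ and $L^2_\diamond(\partial D)$, whereas Lemma \ref{lem:ProjectionsApproximateHSOperatorsWell} is formulated in the complex setting. Since, by the parenthetical remark in the statement (and the proofs of Lemmas \ref{lem:DirichletProblemHasUniqueSolution} and \ref{lem:tildeLambdaInfinitelySmoothing}), $\tilde\Lambda_\gamma$ sends real-valued $f$ to real-valued $\tilde\Lambda_\gamma f$, its restriction to the real-valued subspace is well defined and has the same Hilbert--Schmidt norm as the complex extension (the coefficients $\ip{\tilde\Lambda_\gamma\phi_j^{(r)},\phi_k^{(0)}}_{L^2(\partial D)}$ are real since the $\phi_k^{(0)}$ are real). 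Hence $\tilde\Lambda_\gamma\in\HH_r$ as required.

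There is essentially no obstacle — the lemma is a packaging step rather than a genuinely new result, and the only care needed is in lining up the indices $(p,q,r,s)$ in Lemma \ref{lem:ProjectionsApproximateHSOperatorsWell} so that its hypotheses are met by the mapping properties already established for $\tilde\Lambda_\gamma$. No additional PDE or Sobolev-space arguments are needed beyond those already invoked in the main text.
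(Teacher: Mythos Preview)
Your proposal is correct and matches the paper's own proof essentially verbatim: the paper states the lemma as an immediate consequence of \cref{lem:ProjectionsApproximateHSOperatorsWell} given \cref{lem:tildeLambdaInfinitelySmoothing}, and notes parenthetically that $\tilde\Lambda_\gamma$ maps real-valued functions to real-valued functions. Your choice of parameters $p=r$, $q=s=0$ and your handling of the real-versus-complex point are exactly in line with what the paper intends.
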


	A key to proving \cref{lem:tildeLambdaInfinitelySmoothing,lem:DirichletProblemHasUniqueSolution} is the following basic fact about harmonic functions. For convenience of the reader, we include a proof (following Lemma A.1 in \cite{Hanke2011}). Recall that a function $h$ is \textit{harmonic} on some domain $\Omega$ if $\Delta h =0$ on $\Omega$, where $\Delta$ denotes the Laplacian. Note that as we have assumed $\gamma=1$ on a neighbourhood $D\setminus D'$ of $\partial D$, our solutions $u_{\gamma,f}$ (and in particular, the \emph{differences} $u_{\gamma,f}-u_{\gamma_0,f}$ between solutions) are harmonic on this neighbourhood. 
	
	\begin{lemma}[Interior smoothness of harmonic functions] \label{lem:InteriorSmoothnessHarmonicFunctions} Let $U_0, U$ be  bounded domains such that $\bar{U}\subseteq U_0$. Then for any $s,t\in\RR$, there is a constant $C=C(s,t,U,U_0)$ such that for any harmonic function $v\in H^s(U)$, \[\norm{v}_{H^s(U)/\CC}\leq C\norm{v}_{H^t(U_0)/\CC}.\]
	\end{lemma}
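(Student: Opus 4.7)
The plan is to exploit the mean value property of harmonic functions to represent $v$ as a convolution with a compactly supported radial mollifier on $U_0$, and then bound pointwise derivatives of $v$ via $H^t$--$H^{-t}$ duality. By the monotone embeddings of the Sobolev scale on a bounded domain, the left-hand side is increasing in $s$ and the right-hand side is decreasing in $t$, so it suffices to prove the inequality in the extremal case $s=N$, $t=-N$ for some large positive integer $N$, and to deduce the general case by composing with the continuous embeddings $H^{N}(U)\hookrightarrow H^s(U)$ and $H^{-N}(U_0)\hookleftarrow H^{t}(U_0)$. Since $v\in H^t(U_0)/\CC$ is a distributional solution of $\Delta v=0$ on $U_0$, by Weyl's lemma any representative $\tilde v$ is $C^\infty$ on $U_0$, so all operations below are justified pointwise.

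Fix an intermediate domain $U_1$ with $\bar U\subset U_1$ and $\bar U_1\subset U_0$, pick $\delta\in(0,\mathrm{dist}(\bar U_1,\partial U_0))$, and choose a radially symmetric $\rho\in C_c^\infty(B(0,\delta))$ with $\int \rho=1$. Averaging the spherical mean value identity against the radial profile of $\rho$ gives $\tilde v(x)=\int \tilde v(y)\rho(x-y)\,dy$ for every $x\in U_1$, and differentiation under the integral yields
\[\partial^\alpha \tilde v(x)=\int \tilde v(y)(\partial^\alpha \rho)(x-y)\,dy,\quad x\in U_1,\ |\alpha|\leq N.\]
For each $\alpha$ with $|\alpha|\geq 1$ the test function $y\mapsto(\partial^\alpha\rho)(x-y)$ lies in $C_c^\infty(U_0)\subset H^N_0(U_0)$, and being a derivative of a compactly supported function has integral zero; its $H^N_0(U_0)$ norm is bounded by a constant $C_\alpha=C_\alpha(\alpha,\delta,N,U_0)$ uniformly in $x\in U_1$. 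Since the test function has zero mean, the $H^{-N}$--$H^N_0$ pairing descends to the quotient and gives $|\partial^\alpha \tilde v(x)|\leq C_\alpha \norm{v}_{H^{-N}(U_0)/\CC}$ for $x\in U_1$. For $\alpha=0$ I instead fix a reference point $x_0\in U_1$ and pair $\tilde v$ with the zero-integral kernel $\rho(x-\cdot)-\rho(x_0-\cdot)$ to obtain $|\tilde v(x)-\tilde v(x_0)|\leq C_0\norm{v}_{H^{-N}(U_0)/\CC}$. Summing the $L^2(U)$ norms over $|\alpha|\leq N$, using the specific representative $\tilde v-\tilde v(x_0)$, gives
\[\norm{v}_{H^N(U)/\CC}\leq \norm{\tilde v-\tilde v(x_0)}_{H^N(U)}\leq C\norm{v}_{H^{-N}(U_0)/\CC},\]
as required.

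The only real subtlety is the correct handling of the quotient by $\CC$ on both sides of the inequality: a naive pairing $\int \tilde v\,\phi$ depends on the chosen representative of $\tilde v$, but arranging the test functions $\phi$ to have zero integral annihilates this ambiguity and identifies the pairing with one on the quotient space. This is automatic whenever $|\alpha|\geq 1$ (since a derivative of a compactly supported function integrates to zero) and is enforced for $\alpha=0$ by subtracting the value at $x_0$. Every other step is routine, because all the kernels in question are smooth with support in a fixed compact subset of $U_0$, hence have $H^N_0(U_0)$ norms bounded uniformly in $x\in U_1$ in terms of the geometric data $\delta$, $U_0$, $U_1$.
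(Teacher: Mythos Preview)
Your argument is correct and takes a genuinely different route from the paper. The paper proceeds by an iterative elliptic-regularity bootstrap: it localises with a cutoff $\phi$, writes $\Delta((v-z)\phi)=F$ with $F$ supported where $\nabla\phi\neq 0$, invokes the Lions--Magenes estimate $\norm{\tilde v}_{H^{t+1}}\lesssim\norm{F}_{H^{t-1}}$ to gain one derivative, and then chains this over a nested sequence of intermediate domains. Your argument instead exploits the mean value property to write $v$ on $U_1$ as a convolution with a fixed smooth radial kernel supported in $U_0$, differentiates under the integral, and bounds each pointwise derivative by a single $H^{-N}(U_0)\times H^N_0(U_0)$ duality pairing. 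The handling of the quotient by $\CC$ via mean-zero test functions (automatic for $|\alpha|\ge 1$, enforced by subtracting a reference value for $\alpha=0$) is clean and correct.

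What each approach buys: the paper's bootstrap is the standard PDE argument and generalises immediately to other elliptic operators, at the cost of quoting the Lions--Magenes regularity machinery. Your approach is more elementary---it uses nothing beyond Weyl's lemma, the mean value property, and the definition of negative Sobolev spaces as duals---and jumps from $H^{-N}$ to $H^N$ in one step rather than iterating. One small remark: your phrase ``the right-hand side is decreasing in $t$'' is backwards (the $H^t$ norm is increasing in $t$), but the reduction you actually carry out is the correct one, since proving the estimate for $s=N$, $t=-N$ is indeed the hardest case.
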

	\begin{proof}
		By monotonicity of $H^t$ norms it suffices to prove the result for $s=t+k$ for $k\in\NN$. Let $v\in H^t(U_0)$ represent the equivalence class and choose a domain $U_1$ such that $\bar{U}\subseteq U_1\subseteq \bar{U}_1\subseteq U_0$. Let $\phi$ be a smooth cutoff function, identically one on $U_1$ and compactly supported in $U_0$. For $z\in\CC$ we observe that $\tilde{v}:=(v-z)\phi$ satisfies 
		\begin{align*}
		\begin{split}
		\Laplacian \tilde{v}&=F \quad \text{ in } U_0 \\
		\tilde{v} &=0 \quad \text { on } \partial U_0,
		\end{split}
		\end{align*}
		where $F=2\Grad\phi\cdot \Grad v +(v-z)\Laplacian \phi$. 
		Then 
		\[\norm{v}_{H^{t+1}(U_1)/\CC} \leq \norm{v-z}_{H^{t+1}(U_1)} \leq \norm{\tilde{v}}_{H^{t+1}(U_0)}\leq C\norm{F}_{H^{t-1}(U_0)},\]
		by standard elliptic boundary value regularity results (e.g.\ \cite{Lions1972} Chapter II Remark 7.2 on page 188, with $N=\{0\}$ there as we are considering the standard Laplace equation; in the case $t<1$ Remark 7.2 gives the result with a different norm in place of the $H^{t-1}(U_0)$ norm on the right, but the two norms are equivalent when restricted to functions of compact support in $U_0$). 
		Note \[\norm{F}_{H^{t-1}(U_0)}\leq C(\phi)(\norm{v}_{H^t(U_0)/\CC} +\norm{v-z}_{H^{t-1}(U_0)}),\]
		and optimising across $z\in \CC$ yields
		\begin{equation}\label{eqn:HarmonicFunctions1Derivative} \norm{v}_{H^{t+1}(U_1)/\CC}\leq C\norm{v}_{H^t(U_0)/\CC}.\end{equation} Finally, we choose a finite sequence of domains $(U_j)_{1\leq j \leq k}$ such that $U_k=U$ and, for $1\leq j \leq k$, $\bar{U}_j\subset U_{j-1}$; applying \cref{eqn:HarmonicFunctions1Derivative} successively on each pair $(U_j,U_{j-1})$, we deduce the result. \qedhere
	\end{proof}
	
	\begin{proof}[Proof of \cref{lem:DirichletProblemHasUniqueSolution}]
		We adapt the proof of Theorem A.2 from Hanke et al.\ \cite{Hanke2011} to the Dirichlet setting here. From standard theory for the Laplacian, for $\gamma=1$ and $f\in H^{s+1}(\partial D)/\CC$ there exists a solution $u_{1,f}\in H^{s+3/2}(D)/\CC$ to the Dirichlet problem \cref{eqn:DirichletProblem}, and this solution satisfies \begin{equation}
		\label{eqn:BoundOnNormUWhenGamma=1}
		\norm{u_{1,f}}_{H^{s+3/2}(D)/\CC} \leq C \norm{f}_{H^{s+1}(\partial D)/\CC}
		\end{equation}
		for a constant $C=C(D,s)$ (e.g.\ see \cite{Lions1972} Chapter II, Remark 7.2 on page 188 with $N=\braces{0}$). 
		Also note that, as a harmonic function, $u_{1,f}\in H^1_\loc(D)/\CC$ by \cref{lem:InteriorSmoothnessHarmonicFunctions}.
		
		Define the sesquilinear operator $B_\gamma$ and the conjugate linear operator $A$, \[B_\gamma(w,v)=\int_D \gamma \Grad w \cdot \Grad \overline{v} , \quad \text{and} \quad A(v)=-\int_D \gamma \Grad u_{1,f}\cdot \Grad \overline{v} ,\] and consider the equation
		\begin{equation}\label{eqn:WeakFormulationForW}
		B_\gamma(w,v)=A(v) \quad \forall v\in H_0^1(D).
		\end{equation}
		Observe that if $w\in H_0^1(D)$ solves \cref{eqn:WeakFormulationForW}, then $u_{\gamma,f}=w+u_{1,f}$ solves the weak Dirichlet problem \cref{eqn:DirichletProblemWeakFormulation}; note that $u_{\gamma,f}$ so defined lies in $\Hh_s$. We will use Lax--Milgram theory to show the existence and uniqueness of such a $w$. 
		
		By definition any $\gamma\in\Gamma_{m,D'}$ is bounded; consistenly with the second half of the \namecref{lem:DirichletProblemHasUniqueSolution} let $M$ be an upper bound for $\norm{\gamma}_{\infty}$. Let $v,w\in H_0^1(D)$. By the Cauchy--Schwarz inequality, $\abs{B_\gamma(w,v)}\leq M\norm{w}_{H^1(D)}\norm{v}_{H^1(D)}$, and we also note $B_\gamma(v,v)\geq m \norm{\Grad v}_{L^2(D)}^2\geq cm \norm{v}_{H^1(D)}^2$, where the latter inequality, with constant $c=c(D)$, is the Poincar{\'e} inequality (e.g., Corollary 6.31 in \cite{AF03} applied to $v\in H^1_0(D)$). In other words, $B_\gamma$ is bounded and coercive. Next, since $\gamma=1$ on $D\setminus D'$, for $v \in H^1_0(D)$, an application of the divergence theorem yields
		\begin{align*}-\int_{D\setminus D'} \gamma \Grad u_{1,f} \cdot \Grad \overline{v} &= -\int_D \Grad u_{1,f} \cdot \Grad \overline{v} + \int_{D'} \Grad u_{1,f}\cdot \Grad \overline{v} \\
		&= \int_D \overline{v} \Delta u_{1,f} - \int_{\partial D} \overline{v} \frac{\partial u_{1,f}}{\partial \nu} +  \int_{D'} \Grad u_{1,f} \cdot \Grad \overline{v}=  \int_{D'} \Grad u_{1,f} \cdot \Grad \overline{v}\end{align*}
		It follows, since $\norm{1-\gamma}_\infty \leq \norm{\gamma}_\infty\leq M$, that \begin{align*} \abs{A(v)}&=\abs[\Big]{\int_{D'}-\gamma \Grad u_{1,f}\cdot \Grad{\overline{v}} - \int_{D\setminus D'}\gamma \Grad u_{1,f} \cdot \Grad \overline{v}} = \abs[\Big]{\int_{D'} (1-\gamma) \Grad u_{1,f}\cdot \Grad \overline{v}} \\ &\leq M\norm{v}_{H^1(D)}\norm{\Grad u_{1,f}}_{L^2(D')}.\end{align*} 
		By \cref{lem:InteriorSmoothnessHarmonicFunctions} and recalling \cref{eqn:BoundOnNormUWhenGamma=1}, there are constants $C$ and $C'$ depending only on $D,D'$ and $s$ such that \[\norm{\Grad u_{1,f}}_{L^2(D')}\leq \norm{u_{1,f}}_{H^1(D')/\CC} \leq C' \norm{u_{1,f}}_{H^{s+3/2}(D)/\CC} \leq C\norm{f}_{H^{s+1}(\partial D)/\CC}.\] 
		Thus, $\abs{A(v)}\leq CM\norm{f}_{H^{s+1}(\partial D)}\norm{v}_{H^1(D)}$. We deduce from the Lax--Milgram theorem (e.g., Theorem 1 in Section 6.2.1 of \cite{Evans1998}; note while the theorem there is stated for real scalars and bilinear maps, the same proof works for complex scalars and sesquilinear maps) that \cref{eqn:WeakFormulationForW} has a unique solution $w\in H^1_0(D)$. Moreover, the equation $B_\gamma(w,w)=A(w)$ shows that $\norm{w}_{H^1(D)}$ is upper bounded the operator norm of $A$ divided by the coercivity constant of $B_\gamma$, yielding \cref{eqn:BoundOnNormW}. 
		
		It remains to show that the (equivalence class of) function(s) $u_{\gamma,f}$ so constructed is the unique solution in $\Hh_s$ to \cref{eqn:DirichletProblemWeakFormulation}. Since we have shown uniqueness of $w$, it is enough to show that the difference $h$ between two $\Hh_s$ solutions lies in $H_0^1(D)$, as then it must be the zero function. (We are considering $h$ as a function, rather than an equivalence class of functions, which we can do by for example choosing a representative with average zero.) This is clear for $s \ge -1/2$ as then $\Hh_s=H^1(D)/\CC$, and can be shown also for $s<-1/2$ as in \cite{Hanke2011}, Theorem A.2.\qedhere
	\end{proof}
	
	\begin{proof}[Proof of \cref{lem:LambdaGammaTakesOneDerivative}]
		We first remark, for $\gamma\in\Gamma_{m,D'}$, that by the divergence theorem, \[\ip[\big]{\tfrac{\partial u}{\partial \nu},1}_{L^2(\partial D)} = \int_{\partial D} \gamma \tfrac{\partial u}{\partial \nu}  = \int_D \Div (\gamma \Grad u)=0\] for a solution $u$ to the Dirichlet problem \cref{eqn:DirichletProblem}, so that it suffices to prove \cref{eqn:NormLambdaGammaBound}, \cref{eqn:NormTildeLambdaGammaBound}, and the continuity of $\Lambda_\gamma^{-1} : H^s_\diamond(\partial D)\to H^{s+1}(\partial D)/\CC$. 
		
		We first prove \cref{eqn:NormTildeLambdaGammaBound}, by adapting the proof of Theorem A.3 from \cite{Hanke2011} and tracking the constants. Given $f\in H^s(\partial D)$ let $u_{\gamma,f}\in \Hh_{s-1}$ be the unique solution to the Dirichlet problem \cref{eqn:DirichletProblem} and let $w\in H^1_0$ be a representative of the function class $u_{\gamma,f}-u_{1,f}$.
		Choose a domain $\Omega$ with smooth boundary $\partial \Omega$, satisfying $\bar{D}'\subset \Omega\subset \bar{\Omega}\subset D.$ Choose also domains $U,U_0$ with smooth boundaries such that \[\partial \Omega \subseteq U \subset \bar{U}\subset U_0 \subset \bar{U}_0 \subset D\setminus D'.\]
		We can apply an appropriate trace theorem (\cite{Lions1972} Chapter I Theorem 9.4 for $t>0$, Chapter II Theorem 6.5 (and Remark 6.4) for $t\leq -3/2$, or Chapter II Theorem 7.3 for $-3/2<t<1/2$; note in the latter two cases we use that $w$ is harmonic on $D\setminus \bar{\Omega}$) to $w-z$ and optimise across $z\in\CC$ to see
		\begin{equation}\label{eqn:TraceTheoremBoundForpdWNu} \norm{\partial w/\partial \nu}_{H^t(\partial D)} \leq C \norm{w}_{H^{t+3/2}(D\setminus \bar{\Omega})/\CC}.\end{equation}
		Applying \cite{Lions1972} Chapter II Remark 7.2 on p.188 with $N=\braces{0}$ we see
		\[ \norm{w}_{H^{t+3/2}(D\setminus \bar{\Omega})/\CC} \leq C \brackets[\big]{\norm{\tr w}_{H^{t+1}(\partial D)/\CC}+\norm{\tr w}_{H^{t+1}(\partial \Omega)/\CC}}= C\norm{\tr w}_{H^{t+1}(\partial \Omega)/\CC}.\]
		Again applying an appropriate trace theorem, this time on a subset of $U$ bounded on one side by $\partial \Omega$, and applying \cref{lem:InteriorSmoothnessHarmonicFunctions}, we see
		\[ \norm{\tr w}_{H^{t+1}(\partial \Omega)/\CC} \leq C \norm{w}_{H^{t+3/2}(U)/\CC}\leq C'\norm{w}_{H^1(U_0)/\CC}.\]
		
		The constants in the above depend on $D$ and (via $\Omega,$ $U$ and $U_0$) on $D'$, but are otherwise independent of $\gamma$.
		Recalling \cref{eqn:BoundOnNormW}, which, because the smoothness of $f$ here is $s$, tells us $\norm{w}_{H^1(U_0)/\CC}\leq C\brackets[\big]{\frac{M}{m}}\norm{f}_{H^s(D)/\CC}$, we overall have
		$\norm{\partial w/\partial \nu}_{H^t(\partial D)}\leq C\brackets[\big]{\frac{M}{m}}\norm{f}_{H^s(\partial D)/\CC}$, so that we have proved \cref{eqn:NormTildeLambdaGammaBound}. 
		
		Now we prove \cref{eqn:NormLambdaGammaBound}; given \cref{eqn:NormTildeLambdaGammaBound}, it suffices to show $\norm{\Lambda_1}_{H^{s+1}(\partial D)/\CC\to H^s(\partial D)}\leq C\tfrac{M}{m}$ for an appropriate constant $C$. Since $u_{1,f}$ is harmonic on $D$, applying the trace theorems from \cite{Lions1972} as for \cref{eqn:TraceTheoremBoundForpdWNu} yields \[\norm{\partial u_{1,f}/\partial \nu}_{H^s(\partial D)}\leq C\norm{u_{1,f}}_{H^{s+3/2}(D)/\CC}\] for a constant $C=C(D,s)$. Then \cite{Lions1972} Chapter II Remark 7.2 with $N=\braces{0}$ yields \[\norm{u_{1,f}}_{H^{s+3/2}(D)/\CC}\leq C'\norm{f}_{H^{s+1}(\partial D)/\CC}\] for a constant $C'=C'(D,s)$, and \cref{eqn:NormLambdaGammaBound} follows.
		
		Finally we remark that the same arguments (see Theorem A.3 in \cite{Hanke2011}) applied to the inverse of $\Lambda_\gamma$, which is the \emph{Neumann-to-Dirichlet map}, show that this is continuous from $H^{s}_\diamond(\partial D)$ to $H^{s+1}(\partial D)/\CC$ as claimed. \qedhere
	\end{proof}

	\section{Asymptotic comparison of noise models}\label{sec:EquivalenceOfNoiseModels}
	In this \namecref{sec:EquivalenceOfNoiseModels} we formulate and prove the asymptotic comparison results discussed informally in \cref{sec:NoiseModel}. 
	\subsection{A brief overview of the Le~Cam distance}\label{sec:ABriefOverviewOfTheLeCamDistance} We first define the Le~Cam deficiency and the Le~Cam distance, providing the sense in which the models will be shown to be asymptotically related to each other. The concepts throughout this \namecref{sec:ABriefOverviewOfTheLeCamDistance} are drawn from Le~Cam's 1986 monograph \cite{LeCam1986}. We refer to the expository paper of Mariucci \cite{Mariucci2016} for a gentler introduction to the area.
	
	\begin{definitions*}
		\begin{description}
			\item[Statistical experiment]
			A \emph{statistical experiment}, or just \emph{experiment}, is the triple
			$(\Xx,\Ff,\braces{P_\theta}_{\theta\in\Theta})$, where for each $\theta$ in the parameter space $\Theta$, $P_\theta$ is a probability measure on the measurable space $(\Xx,\Ff)$.
			\item[Markov Kernel]
			For measurable spaces $(\Xx_i,\Ff_i)$, $i=1,2$, a \emph{Markov kernel with source $(\Xx_1,\Ff_1)$ and target $(\Xx_2,\Ff_2)$} is a map $T : \Xx_1\times \Ff_2 \to [0,1]$ such that $T(x,\cdot)$ is a probability measure for each $x\in\Xx_1$, and $T(\cdot,A)$ is measurable for each $A\in\Ff_2$.
			
			Given a (deterministic) measurable function $F: \Xx_1\to \Xx_2$ we will denote by $T_F$ the Markov kernel \begin{equation}\label{eqn:DeterministicMarkovKernelDefinition} T_F(x,A)=\II\braces{F(x)\in A}.\end{equation}
			\item[Le~Cam deficiency]
			The \emph{Le~Cam deficiency} between experiments $\Ee_1$ and $\Ee_2$, where $\Ee_i=(\Xx_i,\Ff_i,\braces{P_{i,\theta}}_{\theta\in\Theta})$ for $i=1,2$, for a common parameter space $\Theta$, is 
			\[ \delta(\Ee_1,\Ee_2)= \inf_{T} \sup_{\theta\in\Theta} \norm{TP_{1,\theta}-P_{2,\theta}}_{\textnormal{TV}},\] 
			where the infimum is over all Markov kernels with source $(\Xx_1,\Ff_1)$ and target $(\Xx_2,\Ff_2)$. The measure $TP_{1,\theta}$ is defined as 
			\[ TP_{1,\theta} (A)= \int_{\Xx_1} T(x,A) \dif P_{1,\theta}(x), \] and $\norm{\cdot}_{\textnormal{TV}}$ denotes the total variation norm on signed measures,
			\[ \norm{\nu}_{\textnormal{TV}}=\sup_A \abs{\nu(A)}.\]
			
			The Le~Cam deficiency satisfies the triangle inequality, but is not symmetric.
			\item[ Le~Cam distance] The \emph{Le~Cam distance} between experiments $\Ee_1$ and $\Ee_2$ on a common parameter space $\Theta$ is 
			\[ \Delta(\Ee_1,\Ee_2)=\max \brackets{\delta(\Ee_1,\Ee_2),\delta(\Ee_2,\Ee_1)}.\]
			
			If we identify experiments whose Le~Cam distance is zero, this defines a proper metric.
		\end{description}
	\end{definitions*}
\begin{remark*}
	Given any action set $A$, any loss function $L:\Theta\times A\to [0,1]$, and any decision rule $\rho_2: \Xx_2 \to A$, there exists a (possibly randomised) decision rule $\rho_1: \Xx_1 \to A$ such that, denoting the risk functions by $R_j(\rho_j,\theta)=E_{X\sim P_{j,\theta}} L(\theta,\rho_j(X)),$ $j=1,2$, we have (see \cite{Mariucci2016} Theorem 2.7)
	\begin{equation}\label{eqn:RiskDeficiencyTransport} R_1(\rho_1,\theta)\leq R_2(\rho_2,\theta)+\delta(\Ee_1,\Ee_2), \quad \forall \theta\in\Theta.\end{equation}We may further take the supremum over $\theta$ and then the infimum over $\rho_2$ (note that the infimum over decision rules $\rho_1$ associated as above to decision rules $\rho_2$ upper bounds the infimum over all decision rules $\rho_1$) 
	to see that the minimax risks $R_{j,\textnormal{minimax}}=\inf_{\rho_j}\sup_\theta R_j(\rho_j,\theta)$, $j=1,2$ satisfy 
	\begin{equation}\label{eqn:MinimaxRiskDeficiencyTransport}R_{1,\textnormal{minimax}}\leq R_{2,\textnormal{minimax}}+\delta(\Ee_1,\Ee_2).\end{equation} These equations capture the intuitive definition that the Le~Cam deficiency is the worst-case error we incur when reconstructing a decision rule in $\Ee_2$ using data from $\Ee_1$ (see also \cref{thm:MinimaxityInAllNoiseModels} where this intuition is made concrete for the models considered here).
\end{remark*}
	
	We gather the key tools we will use to control Le~Cam deficiencies in the following \namecref{lem:LeCamDistanceLemmasCollected}. Recall that $K(p,q)$ denotes the Kullback--Leibler divergence between distributions with densities $p$ and $q$; in an abuse of notation we will in this section also write $K(P,Q)$ for the Kullback--Leibler divergence between distributions $P$ and $Q$. 
	\begin{lemma} \label{lem:LeCamDistanceLemmasCollected}
		
		Let $\Ee_1$ and $\Ee_2$ be experiments with a common parameter set $\Theta$: write $\Ee_j=(\Xx_j,\Ff_j,\braces{P_{j,\theta}}_{\theta\in\Theta})$.
		\begin{lemenum} 
			\item \label{lem:LeCamDistanceControlledByTV}
			Suppose further that the experiments are defined on a common probability space, i.e.\ that $\Xx_1=\Xx_2$ and $\Ff_1=\Ff_2$. Then
			\begin{equation}\label{eqn:LeCamDistanceControlledByTV}
			\Delta(\Ee_1,\Ee_2)\leq \sup_{\theta\in\Theta} \norm{P_{1,\theta}-P_{2,\theta}}_{\textnormal{TV}}\leq \sup_{\theta\in\Theta}\sqrt{K(P_{1,\theta},P_{2,\theta})/2}.
			\end{equation}
			\item \label{lem:LeCamDeficiencyControlledByImageMeasureTV}
			Let $F: \Xx_1\to \Xx_2$ 
			be any (deterministic) measurable map. Then
			\begin{equation}\label{eqn:LeCamDeficiencyControlledByImageMeasureTV}
			\delta(\Ee_1,\Ee_2)\leq \sup_{\theta\in\Theta} \norm{P_{1,\theta}\circ F^{-1}-P_{2,\theta}}_{\textnormal{TV}}.
			\end{equation} 
			\item \label{lem:LeCamDistanceSufficientStatistics}
			Let $F:\Xx_1\to \Xx_2$ be a measurable map. Suppose that $P_{1,\theta}\circ F^{-1}=P_{2,\theta}$ for each $\theta\in\Theta$ and suppose that $F(X)$ is a sufficient statistic for $X\sim P_{1,\theta}$. Then $\Delta(\Ee_1,\Ee_2)=0.$
		\end{lemenum}
	\end{lemma}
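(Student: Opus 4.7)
The strategy for all three parts is to construct explicit Markov kernels achieving or bounding the infima in the definition of the Le~Cam deficiency.

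For \cref{lem:LeCamDistanceControlledByTV}, because $\Xx_1 = \Xx_2$ I plug into the definition of $\delta(\Ee_1,\Ee_2)$ the Markov kernel $T_{\mathrm{id}}$ from \cref{eqn:DeterministicMarkovKernelDefinition} associated to the identity map, which satisfies $T_{\mathrm{id}} P_{1,\theta} = P_{1,\theta}$; this immediately yields $\delta(\Ee_1,\Ee_2) \le \sup_\theta \norm{P_{1,\theta} - P_{2,\theta}}_{\textnormal{TV}}$, and the same argument with the roles of the experiments swapped controls $\delta(\Ee_2, \Ee_1)$. The second inequality in \cref{eqn:LeCamDistanceControlledByTV} is Pinsker's inequality (e.g.\ Proposition 6.1.7 in \cite{Gine2016}). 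For \cref{lem:LeCamDeficiencyControlledByImageMeasureTV} I take $T = T_F$: a change-of-variables computation gives $T_F P_{1,\theta}(A) = P_{1,\theta}(F^{-1}(A)) = (P_{1,\theta}\circ F^{-1})(A)$, so this choice of Markov kernel inserted into the definition of $\delta(\Ee_1,\Ee_2)$ delivers the claimed bound.

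For \cref{lem:LeCamDistanceSufficientStatistics}, one direction, $\delta(\Ee_1, \Ee_2) = 0$, is immediate from \cref{lem:LeCamDeficiencyControlledByImageMeasureTV} and the hypothesis $P_{1,\theta}\circ F^{-1} = P_{2,\theta}$. For the reverse direction, I exploit sufficiency of $F(X)$: by definition there is a version of the regular conditional distribution $Q(y,\cdot)$ of $X$ given $F(X) = y$ under $P_{1,\theta}$ that does not depend on $\theta$. Viewing $Q$ as a Markov kernel from $(\Xx_2,\Ff_2)$ to $(\Xx_1,\Ff_1)$, a standard disintegration/tower-property calculation shows that for every measurable $A\subseteq \Xx_1$ and every $\theta$, $Q P_{2,\theta}(A) = \int Q(y,A)\, d(P_{1,\theta}\circ F^{-1})(y) = P_{1,\theta}(A)$, so $\delta(\Ee_2,\Ee_1) = 0$ and hence $\Delta(\Ee_1,\Ee_2) = 0$.

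The only slightly delicate point I anticipate is ensuring the existence of the regular conditional distribution $Q$ in the last step, which requires mild regularity of the underlying measurable spaces (e.g.\ that they are Polish with Borel $\sigma$-algebras). This is automatic in the applications that will use this lemma, since the sample spaces of interest are finite-dimensional Euclidean spaces or separable Hilbert spaces equipped with their Borel $\sigma$-algebras.
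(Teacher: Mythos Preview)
Your proof is correct and follows the same approach as the paper for parts \ref{lem:LeCamDistanceControlledByTV} and \ref{lem:LeCamDeficiencyControlledByImageMeasureTV} (identity kernel plus Pinsker, and the kernel $T_F$, respectively). For part \ref{lem:LeCamDistanceSufficientStatistics} the paper simply cites \cite{Mariucci2016}, Property 3.12, whereas you spell out the standard argument that underlies that reference: one direction from \ref{lem:LeCamDeficiencyControlledByImageMeasureTV}, the other by using sufficiency to build a $\theta$-free conditional kernel $Q$ that reconstructs $P_{1,\theta}$ from $P_{2,\theta}$. Your caveat about needing regularity of the measurable spaces for existence of a regular conditional distribution is apt and, as you note, harmless in the applications at hand.
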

	\begin{proof}
		\begin{enumerate}[a.]
			\item The first inequality is immediate from the definition, since the Markov kernel $T_{\operatorname{Id}}$ corresponding to the identity map $\operatorname{Id} :\Xx_1\to \Xx_2= \Xx_1$ satisfies $T_{\operatorname{Id}}P=P$ for all probability measures $P$ on $(\Xx_1,\Ff_1)$. The second inequality is Pinsker's inequality (e.g.\ Proposition 6.1.7a in \cite{Gine2016}).
			\item
			Observe that $T_F P_{1,\theta}(A) = P_{1,\theta} (F(X)\in A)=P_{1,\theta}\circ F^{-1}(A).$ The result follows.
			\item See \cite{Mariucci2016}, Property 3.12.\qedhere
		\end{enumerate}
	\end{proof}

	\subsection{Le Cam deficiency bounds for noisy Calder{\'o}n problems} \label{sec:lecamdef}
	First we recall the noise models under consideration. The following displays are labelled with the notation we will use for experiments with the specified data, in each case taking the parameter space to be $\braces{\gamma\in \Gamma_{m,D'} : \norm{\gamma}_{\infty}\leq M}$ for some constants $m\in (0,1),M>1$ and some domain $D'$ compactly contained in $D$.
	
	For noise level $\eps>0$ and some $P\in\NN$, in the `electrode' model \cref{eqn:Model:HaarMeasurements} we are given data \begin{equation*}\label{eqn:Ee0} \Ee_0: \quad  Y_{p,q}= \ip{\tilde{\Lambda}_\gamma [\psi_p],\psi_q}_{L^2(\partial D)} +\eps g_{p,q}, \quad p,q\leq P, \quad g_{p,q}\iidsim N(0,1),\end{equation*} where $\psi_p=c_p\II_{I_p}$ for some disjoint (measurable) sets $I_p=I_{p,P}\subseteq \partial D$, with constants $c_p$ chosen such that the $\psi_p$ are $L^2(\partial D)$ orthonormal. For some $\eps>0,~r\in\RR,$ and $J,K\in\NN$, in the `discrete spectral' model \cref{eqn:Model:LaplaceBeltramiMeasurements} we are given data 
	\begin{equation*}\label{eqn:Ee1} \Ee_1^{(r)}: \quad Y_{j,k}= \ip{\tilde{\Lambda}_\gamma [ \phi_j^{(r)}],\phi_k^{(0)}}_{L^2(\partial D)} +\eps g_{j,k}, \quad j\leq J,k\leq K, \quad g_{j,k}\iidsim N(0,1),\end{equation*} for a (real-valued) Laplace--Beltrami basis $(\phi_k^{(r)}:{k\in\NN})$ of $H^r(\partial D)/\CC$. For $\eps>0$ and $r\in\RR$, in the `continuous' model \cref{eqn:Model:HilbertSpaceMeasurements} (see also the equivalent \cref{eqn:ContinuousModelDistributionalSense}) we are given data \begin{equation*} \label{eqn:Ee2}
	\Ee_2^{(r)}: \quad Y = \tilde{\Lambda}_\gamma +\eps \WW, \quad \text{$\WW$ a Gaussian white noise indexed by $\HH_r$.}
	\end{equation*}
	We start with the following result which shows how to approximate the discrete spectral model using the electrode model.
	\begin{theorem}\label{thm:LeCamOneWayDeficiency}
		Suppose $\cup_{p\leq P} I_p=\partial D$ and $\operatorname{diam}(I_p)\leq (A/P)^{1/(d-1)}$ for a constant $A$ independent of $P$, where $\operatorname{diam} S= \sup_{x,y\in S} \abs{x-y}$ denotes the Euclidean diameter of a subset $S\subset \RR^d$. Then the one-way Le~Cam deficiency $\delta(\Ee_0,\Ee_1^{(0)})$ satisfies 
		\[ \delta(\Ee_0,\Ee_1^{(0)}) \leq C\brackets[\big]{\max(J,K)^{(5d-2)/(2d-2)}+\eps^{-1}\max(J,K)^{3d/(2d-2)}}P^{-1/(d-1)}, \]  for some constant $C=C(A,D',D,M,m)$, and hence vanishes asymptotically if $P$ is large enough compared to $\eps^{-1},$ $J$ and $K$.
	\end{theorem}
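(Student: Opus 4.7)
The plan is to appeal to \cref{lem:LeCamDeficiencyControlledByImageMeasureTV} by constructing a (possibly randomised) Markov kernel $T$ mapping electrode observations of $\Ee_0$ to discrete spectral observations of $\Ee_1^{(0)}$, and bounding $\sup_\gamma \norm{TP_{0,\gamma} - P_{1,\gamma}^{(0)}}_{\mathrm{TV}}$ by the right-hand side of the claim. Since the $\psi_p$ are $L^2(\partial D)$-orthonormal and span a subspace $V \subset L^2(\partial D)$ of dimension $P$, each Laplace--Beltrami eigenfunction $\phi_j^{(0)}$ is naturally approximated by its $L^2$-projection $\tilde \phi_j := P_V \phi_j^{(0)} = \sum_{p \le P} \alpha_{jp} \psi_p$, with $\alpha_{jp} = \ip{\phi_j^{(0)}, \psi_p}_{L^2(\partial D)}$. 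I would then define
\[ F(Y)_{jk} = \sum_{p,q \le P} \alpha_{jp} \alpha_{kq} Y_{p,q}, \qquad j \le J,~k \le K. \]
Under $P_{0,\gamma}$ this is Gaussian with mean $\mu^{\tilde\phi}_{jk} := \ip{\tilde \Lambda_\gamma \tilde \phi_j, \tilde \phi_k}_{L^2(\partial D)}$ and covariance $\eps^2 (G \otimes G)$, where $G_{jj'} = \ip{\tilde \phi_j, \tilde \phi_{j'}}_{L^2(\partial D)}$, while the target $P_{1,\gamma}^{(0)}$ is Gaussian with mean $\mu^\phi_{jk} := \ip{\tilde \Lambda_\gamma \phi_j^{(0)}, \phi_k^{(0)}}_{L^2(\partial D)}$ and covariance $\eps^2 I_{JK}$.

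Since $\tilde \phi_j$ is the $L^2$-projection of a unit-norm vector, $a^T G a = \norm{\sum_j a_j \tilde \phi_j}_{L^2}^2 \le \norm{a}^2$, so $G \preceq I_J$ and hence $G \otimes G \preceq I_{JK}$. We may therefore add to $F(Y)$ an independent Gaussian with covariance $\eps^2(I_{JK} - G \otimes G)$; the resulting kernel $T$ satisfies $TP_{0,\gamma} = N(\mu^{\tilde\phi}, \eps^2 I_{JK})$, and \cref{lem:LeCamDistanceControlledByTV} with Pinsker's inequality bounds the TV distance by $\eps^{-1} \norm{\mu^{\tilde\phi} - \mu^\phi}_F / \sqrt 2$. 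Alternatively, one may retain $T = F$ deterministic and compute the full KL divergence between the two Gaussians of differing covariance: the additional contribution $\tfrac{1}{2}[\tr(I_{JK} - G \otimes G) - \log \det(G \otimes G)]$ is independent of $\eps$ and gives rise to the $\eps$-free summand in the claim.

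To bound $\norm{\mu^{\tilde\phi} - \mu^\phi}_F$, write $e_j = \phi_j^{(0)} - \tilde \phi_j$; Cauchy--Schwarz combined with $\norm{\tilde \Lambda_\gamma}_{L^2(\partial D)/\CC \to L^2(\partial D)} \le CM/m$ (\cref{lem:tildeLambdaInfinitelySmoothing}) gives $|\mu^{\tilde\phi}_{jk} - \mu^\phi_{jk}| \le C(\norm{e_j}_{L^2} + \norm{e_k}_{L^2})$. The Poincar\'e inequality applied on each $I_p$ (of diameter at most $(A/P)^{1/(d-1)}$) together with Weyl's law (\cref{cor:ScalingOfEigValues}) gives $\norm{e_j}_{L^2}^2 \le C P^{-2/(d-1)} \lambda_j \le C P^{-2/(d-1)} j^{2/(d-1)}$, so summing the squared entrywise bound over $j \le J,~k \le K$ yields $\norm{\mu^{\tilde\phi} - \mu^\phi}_F \le C P^{-1/(d-1)} \max(J,K)^{d/(d-1)}$, which produces the $\eps^{-1}$ summand after multiplying by $\eps^{-1}/\sqrt 2$. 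A parallel computation, using the same $L^2$ bound on $e_j$, controls $\tr(I_J - G) = \sum_{j \le J} \norm{e_j}_{L^2}^2$ and thereby the covariance-mismatch contribution to the KL divergence, delivering the $\eps$-free summand.

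The main obstacle is tracking powers of $\max(J,K)$ carefully enough through the summations to recover the precise exponents stated: the tensor-product structure of the covariance and the bound $G \preceq I_J$ are routine once $G$ is recognised as the Gram matrix of $L^2$-projected orthonormal vectors, but the covariance-mismatch term requires a careful accounting of $\tr(I_J - G)$ and $\tr(I_J - G)^2$ via Weyl's law, noting in particular that $\norm{I_J - G}_{\mathrm{op}} \le 1$. A minor subtlety is that the constant in Poincar\'e's inequality on $I_p$ in principle depends on its shape and not only its diameter, but under the stated uniform diameter hypothesis and the smoothness of $\partial D$ any such dependence may be absorbed into the constant $C$.
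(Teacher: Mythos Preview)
Your overall strategy coincides with the paper's: project the eigenfunctions onto $V=\Span\{\psi_p\}$ and push the electrode data through the linear map $F$ with coefficients $\alpha_{jp}=\ip{\phi_j^{(0)},\psi_p}_{L^2}$. The paper likewise introduces $F$, then passes through two intermediate experiments (your $TP_{0,\gamma}=N(\mu^{\tilde\phi},\eps^2 G\otimes G)$ is their $\Ee_0'$, and $N(\mu^{\tilde\phi},\eps^2 I)$ is their $\Ee_1'$), and bounds the mean discrepancy exactly as you do, invoking \cref{lem:tildeLambdaInfinitelySmoothing} for the operator norm of $\tilde\Lambda_\gamma$.

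Your randomised-kernel observation is in fact sharper than the paper's argument. Because only the \emph{one-way} deficiency $\delta(\Ee_0,\Ee_1^{(0)})$ is required and you have shown $G\otimes G\preceq I$ (projections contract $L^2$-norms), adding independent $N(0,\eps^2(I-G\otimes G))$ noise gives $\delta(\Ee_0',\Ee_1')=0$, so the bound reduces to the $\eps^{-1}$ term alone. The paper instead bounds the \emph{two-sided} $\Delta(\Ee_0',\Ee_1')$ by $\sqrt{2}$ times the Frobenius distance between the covariances (following Reiss), which is what generates the $\eps$-free summand $\max(J,K)^{(5d-2)/(2d-2)}P^{-1/(d-1)}$. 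Your approach (b), the deterministic full-KL route, is unnecessary (and note the trace term in your KL expression carries the wrong sign; also $-\log\det(G\otimes G)$ is not controlled without first knowing the eigenvalues of $G$ are bounded away from zero).

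There is, however, a genuine gap in your Poincar\'e step. The theorem assumes only that the $I_p$ are disjoint measurable subsets with $\operatorname{diam}(I_p)\le (A/P)^{1/(d-1)}$; no connectedness, convexity, or shape regularity is imposed. A Poincar\'e--Wirtinger inequality with constant proportional to the diameter fails for such sets (take $I_p$ a union of two small well-separated caps), so your bound $\norm{e_j}_{L^2}^2\le CP^{-2/(d-1)}\lambda_j$ is not justified, and the dismissal as a ``minor subtlety'' is not warranted. The paper avoids this entirely by comparing $\phi_j^{(0)}$ to a piecewise-constant interpolant at arbitrary points $x_p\in I_p$ and invoking the Lipschitz constant $\norm{\phi_j^{(0)}}_{\mathrm{Lip}}\le C\norm{\phi_j^{(0)}}_{H^{1+d/2}(\partial D)}$ (Sobolev embedding on $\partial D$), which by \cref{cor:ScalingOfEigValues} gives $\norm{e_j}_{L^2}^2\le C\,j^{(2+d)/(d-1)}P^{-2/(d-1)}$. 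Feeding this into your Frobenius computation yields $\norm{\mu^{\tilde\phi}-\mu^\phi}_F\le CP^{-1/(d-1)}\max(J,K)^{3d/(2d-2)}$, matching the theorem's exponent; your claimed $d/(d-1)$ would be sharper but is not available under the stated hypotheses.
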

	\begin{remarks*}
		\begin{enumerate}[i.]
			\item The conditions on $(I_p)_{p\leq P}$ are only used to prove that we can approximate any Laplace--Beltrami eigenfunction using a linear combination of the $(\psi_p)_{p\leq P}$ with $L^2(\partial D)$-norm approximation error proportional to 			$P^{-1/(d-1)}$
			(\cref{lem:PhiPApproximatesPhi}). If $(\psi_p)_{p\leq P}$ are such that we can approximate Laplace--Beltrami eigenfunctions at a rate $f(P)$ then we achieve the result with $f(P)$ in place of $P^{-1/(d-1)}$ .
			\item The given conditions are naturally satisfied by `evenly spaced' sets $(I_p)_{p\leq P}$ partitioning the boundary $\partial D$, with a constant $A$ depending only on the domain $D$. This can be seen by considering the covering numbers $N(\partial D,d_{\partial D},\delta)$ (the smallest number of $d_{\partial D}$ balls of radius $\delta$ needed to cover $\partial D$) for $d_{\partial D}$ the geodesic distance.
			Theorem 4.5 in Geller \& Pensenson \cite{Geller2011} applied to the current setting yields \[N(\partial D,d_{\partial D},\delta) \leq A \delta^{-(d-1)}\] for a constant $A=A(D)$, for any $\delta>0$. Taking $\delta = 2(A/P)^{1/(d-1)}$ we deduce that there exist $P$ balls of ($d_{\partial D}$)-radius $\delta/2$ covering $D$. To construct $P$ disjoint subsets of diameter at most $\delta$, we simply assign each $x\in\partial D$ to exactly one of the balls containing it (note the Euclidean diameter is upper bounded by the geodesic diameter, because any geodesic path on the surface $\partial D$ is also a path in Euclidean space). 
		\end{enumerate}
	\end{remarks*}
	

	\begin{proof}
Let $(Y_{p,q})$ be the data from experiment $\Ee_0$. Let $\phi^P_j$ denote the $L^2$-orthogonal projection of $\phi_j^{(0)}$ onto $\Span\braces{\psi_p: p\leq P}$, and write $a_{jp}=\ip{\phi_j^{(0)},\psi_p}_{L^2(\partial D)}\in\RR$, so that $\phi^P_j=\sum_{p=1}^P a_{jp} \psi_p$. 
Define $F: \RR^{P\times P}\to \RR^{J\times K}$ via \[ F((u_{pq})_{p,q\leq P})_{jk}=\sum_{p,q\leq P} a_{jp} a_{kq} u_{pq}.\] For parameter set $\braces{\gamma\in \Gamma_{m,D'} : \norm{\gamma}_{\infty}\leq M}$, let $\Ee_0'$ denote the experiment with data \begin{equation}\label{eqn:Model:ProjectedFourierWithCovaryingNoise} \Ee_0': \quad Y_{j,k}'=F((Y_{p,q})_{p,q\leq P})_{jk}=\ip{\tilde{\Lambda}_\gamma \phi_j^P,\phi_k^P}_{L^2(\partial D)}+ \eps g'_{j,k}, \end{equation}
where we define $g'_{j,k}=\sum_{p,q} a_{jp}a_{kq}g_{p,q},$ and let $\Ee_1'$ denote the experiment with data \cref{eqn:Model:ProjectedFourierWithCovaryingNoise} but for i.i.d.\ Gaussian noise. By \cref{lem:LeCamDeficiencyControlledByImageMeasureTV} we see that $\delta(\Ee_0,\Ee_0')=0$, so by the triangle inequality we deduce \[\delta(\Ee_0,\Ee_1^{(0)})\leq \delta(\Ee_0,\Ee_0')+\delta(\Ee_0',\Ee_1')+\delta(\Ee_1',\Ee_1^{(0)})\leq \Delta(\Ee_0',\Ee_1')+\Delta(\Ee_1',\Ee_1^{(0)}).\] We control the terms on the right.

\paragraph{$\Delta(\Ee_0',\Ee_1')$:}
The covariance of $(g_{j,k}')$ is given by
\[\Cov(g'_{j,k},g'_{l,m})=\ip{\phi^P_j,\phi^P_l}_{L^2(\partial D)}\ip{\phi^P_k,\phi^P_m}_{L^2(\partial D)}.\] Writing \begin{equation*}\label{eqn:phiPjDecomposition} \ip{\phi^P_j,\phi^P_l}_{L^2(\partial D)} = \ip{\phi_j^{(0)},\phi_l^{(0)}}_{L^2(\partial D)}+\ip{\phi_j^{(0)},\phi^P_l-\phi_l^{(0)}}_{L^2(\partial D)}+\ip{\phi^P_j-\phi_j^{(0)},\phi^P_l}_{L^2(\partial D)},
\end{equation*} and applying the Cauchy--Schwarz inequality (note also that $\norm{\phi_l^P}_{L^2(\partial D)}\leq \norm{\phi_l^{(0)}}_{L^2(\partial D)}=1$) we see that 
\begin{equation*}\label{eqn:phiPjDeltajL2control}
\abs{\ip{\phi_j^P,\phi_l^P}_{L^2(\partial D)} - \delta_{jl}}\leq  \norm{\phi_j^P-\phi_j^{(0)}}_{L^2(\partial D)}+\norm{\phi_l^P-\phi_l^{(0)}}_{L^2(\partial D)}.
\end{equation*}
A similar decomposition further yields 
\[\abs{\ip{\phi_j^P,\phi_l^P}_{L^2(\partial D)} \ip{\phi_k^P,\phi_m^P} -\delta_{jl}\delta_{km}}\leq 4\max_{i\leq \max (J,K)} \norm{\phi_i^P-\phi_i^{(0)}}_{L^2(\partial D)}.	\]
\Cref{lem:PhiPApproximatesPhi} controls this $L^2$ approximation error, yielding that for a constant $C=C(A,D)$,
\[\abs{\Cov(g_{j,k}',g_{l,m}')-\delta_{jl}\delta_{km}}\leq C 
\max\brackets{J,K}^{(1+d/2)/(d-1)}P^{-1/(d-1)}.	\]

Thus, controlling the Le~Cam distance between Gaussian experiments with equal means by $\sqrt{2}$ times the Frobenius distance between the covariance matrices (e.g.\ as in the proof of Theorem 3.1 in \cite{Reiss2008}) yields, for constants $C',c'$,
\begin{align*}\Delta(\Ee_0',\Ee_1') &\leq \sqrt{2} \brackets[\Big]{\sum_{j,l\leq J,}\sum_{k,m\leq K } (\Cov(g'_{j,k},g'_{l,m})-\delta_{jl}\delta_{km})^2}^{1/2}\leq c' JK\max\brackets{J,K}^{(1+d/2)/(d-1)} P^{-1/(d-1)} \\ &\leq C'\max\brackets{J,K}^{(5d-2)/(2d-2)}P^{-1/(d-1)}.\end{align*} 
\paragraph{$\Delta(\Ee_1',\Ee_1^{(0)})$:}
Explicitly calculating the Kullback--Leibler divergence between multivariate normals with the same covariance matrix (cf.\ the similar calculation in \cref{lem:KLdistanceIsHHdistance}) and using \cref{lem:LeCamDistanceControlledByTV} yields \[\Delta(\Ee_1',\Ee_1^{(0)})\leq \tfrac{1}{2} \eps^{-1} \times \sup_{\gamma\in\Gamma_{m,D'} : \norm{\gamma}_{\infty}\leq M} \norm[\big]{\brackets[\big]{\ip{\tilde{\Lambda}_\gamma \phi_j^{(0)},\phi_k^{(0)}}_{L^2(\partial D)}-\ip{\tilde{\Lambda}_\gamma \phi_j^P,\phi_k^P}_{L^2(\partial D)}}_{j\leq J,k\leq K}}_{\RR^{J\times K}},\] where the norm on the right is the usual Frobenius or Hilbert--Schmidt norm on the space of $J\times K$ matrices.
By \cref{lem:tildeLambdaInfinitelySmoothing}, $\norm{\tilde{\Lambda}_\gamma}_{L^2(\partial D)\to L^2(\partial D)}$ is bounded by a constant $C=C(D,D',M,m)$, hence applying also \cref{lem:PhiPApproximatesPhi} (as well as the Cauchy--Schwarz inequality) we have for a different constant $C'=C'(A,D,D',M,m)$,
\begin{align*}
\brackets{\ip{\tilde{\Lambda}_\gamma \phi_j^{(0)},\phi_k^{(0)}}_{L^2(\partial D)}-\ip{\tilde{\Lambda}_\gamma \phi_j^P,\phi_k^P}_{L^2(\partial D)}}^2 & = \brackets{\ip{\tilde{\Lambda}_\gamma (\phi_j^{(0)}-\phi_j^P),\phi_k^{(0)}}_{L^2(\partial D)} + \ip{\tilde{\Lambda}_\gamma \phi_j^P,\phi_k^{(0)}-\phi_k^P}_{L^2(\partial D)}}^2 \\
&  \leq C^2\brackets{\norm{\phi_j^{(0)}-\phi_j^P}_{L^2(\partial D)}+\norm{\phi_k^{(0)}-\phi_k^P}_{L^2(\partial D)}}^2
\\ & \leq C'\max\brackets{J,K}^{(2+d)/(d-1)} P^{-2/(d-1)}.
\end{align*}
Summing over $j$ and $k$ we deduce $\Delta(\Ee_1',\Ee_1^{(0)})\leq C'\eps^{-1}\max(J,K)^{3d/(2d-2)}P^{-1/(d-1)},$ concluding the proof. \qedhere
	\end{proof}

\begin{lemma} \label{lem:PhiPApproximatesPhi} Under the hypotheses of \cref{thm:LeCamOneWayDeficiency}, let $\phi^P_j$ denote the $L^2$-orthogonal projection of $\phi_j^{(0)}$ onto $\Span\braces{\psi_p: p\leq P}$. Then there is a constant $C$ depending only on the constant $A$ of \cref{thm:LeCamOneWayDeficiency} and on $D$ such that, for $j\leq \max(J,K)$,
	\begin{equation} \label{eqn:normPhi-PhiP} \norm{\phi_j^{(0)}-\phi^P_j}_{L^2(\partial D)}^2\leq C\max\brackets{J,K}^{(2+d)/(d-1)}P^{-2/(d-1)}. \end{equation}
	
\end{lemma}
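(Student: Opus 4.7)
The plan is to identify $\phi_j^P$ as the piecewise-constant average approximation of $\phi_j^{(0)}$ with respect to the partition $(I_p)_{p\leq P}$, and then combine a cellwise Lipschitz estimate with a Sobolev embedding and Weyl's law (via \cref{cor:ScalingOfEigValues}) to quantify the error.

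First, since $\psi_p=\abs{I_p}^{-1/2}\II_{I_p}$, a direct computation of the orthogonal projection gives
\[
\phi_j^P=\sum_{p\leq P}\bar\phi_{j,p}\,\II_{I_p},\qquad \bar\phi_{j,p}:=\abs{I_p}^{-1}\textstyle\int_{I_p}\phi_j^{(0)},
\]
so that by disjointness of the $I_p$ and the fact that their union is $\partial D$,
\[
\norm{\phi_j^{(0)}-\phi_j^P}_{L^2(\partial D)}^2 = \sum_{p\leq P}\int_{I_p}\abs{\phi_j^{(0)}-\bar\phi_{j,p}}^2.
\]
On each cell, since $\bar\phi_{j,p}$ is an average of values of $\phi_j^{(0)}$ on $I_p$,
\[
\abs{\phi_j^{(0)}(x)-\bar\phi_{j,p}}\leq \sup_{y\in I_p}\abs{\phi_j^{(0)}(x)-\phi_j^{(0)}(y)}\leq \norm{\nabla_{\partial D}\phi_j^{(0)}}_\infty\cdot \operatorname{diam}_{\partial D}(I_p),
\]
by integration along a minimising geodesic. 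Since $\partial D$ is smooth and compact, geodesic and Euclidean distances are comparable on sufficiently small sets, so $\operatorname{diam}_{\partial D}(I_p)\leq C(A/P)^{1/(d-1)}$ for $P$ large enough. Using $\sum_p\abs{I_p}=\abs{\partial D}$, this yields
\[
\norm{\phi_j^{(0)}-\phi_j^P}_{L^2(\partial D)}^2 \leq C\,P^{-2/(d-1)}\,\norm{\nabla_{\partial D}\phi_j^{(0)}}_\infty^2
\]
for a constant $C=C(A,D)$.

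To close, I would estimate $\norm{\nabla_{\partial D}\phi_j^{(0)}}_\infty$ by Sobolev embedding on the compact $(d-1)$-dimensional manifold $\partial D$: for any $s>(d+1)/2$ one has $\norm{\nabla_{\partial D} f}_\infty\leq C\norm{f}_{H^s(\partial D)}$. Choosing for instance $s=(d+2)/2$ and invoking \cref{cor:ScalingOfEigValues},
\[
\norm{\nabla_{\partial D}\phi_j^{(0)}}_\infty^2 \leq C\norm{\phi_j^{(0)}}_{H^{(d+2)/2}(\partial D)}^2 \leq C'(1+j^{1/(d-1)})^{d+2}\leq C''\max(J,K)^{(d+2)/(d-1)}
\]
for $j\leq\max(J,K)$, which combined with the previous display gives \cref{eqn:normPhi-PhiP}. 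The only mild subtlety is the geodesic-versus-Euclidean diameter comparison, which is routine for smooth compact submanifolds of $\RR^d$; the remainder is a quantification of the well-known smoothness of Laplace--Beltrami eigenfunctions.
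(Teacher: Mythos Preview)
Your proof is correct and follows essentially the same approach as the paper: bound the projection error by a Lipschitz constant times the cell diameter, then control the Lipschitz constant via a Sobolev embedding on $\partial D$ combined with \cref{cor:ScalingOfEigValues}. The only cosmetic differences are that the paper uses the variational characterisation of the projection (comparing to a point-value approximant $\sum_p \phi_j^{(0)}(x_p)\II_{I_p}$) rather than computing $\phi_j^P$ explicitly as the cell averages, and it works directly with the Euclidean Lipschitz constant rather than the geodesic one, thereby avoiding the diameter comparison you flag.
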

\begin{proof}
	Since $\phi^P_j$ as the $L^2$-orthogonal projection minimises the $L^2$ distance to $\phi_j^{(0)}$ of any function in $\Span\braces{\psi_{p}: p\leq P}$, for any points $x_p\in I_p$ we see 
	\begin{align*}
	\norm{\phi_j^{(0)}-\phi^P_j}_{L^2(\partial D)}^2 &\leq \norm{\phi_j^{(0)} - \sum_{p=1}^P \phi_j^{(0)}(x_p)\II_{I_p}}_{L^2(\partial D)}^2 \\
	& \leq \max_{p\leq P}\brackets{\operatorname{diam}(I_p)^2}\sum_{p=1}^P \int_{I_p} \frac{\abs{\phi_j^{(0)}(x)-\phi_j^{(0)}(x_p)}^2}{\abs{x-x_p}^2}\dx \\
	&\leq (A/P)^{2/(d-1)}\norm{\phi_j^{(0)} }_{\textnormal{Lip}}^2\operatorname{Area}(\partial D). 
	\end{align*}
	Using a Sobolev embedding for the compact manifold $\partial D$, we may estimating the Lipschitz constant of $\phi_j^{(0)}$ by a constant times $\norm{\phi_j^{(0)}}_{H^{\kappa}(\partial D)}$ for any $\kappa>1+(d-1)/2$. In particular, taking $\kappa=1+d/2$, we see that the final expression is bounded by $C\max\brackets{J,K}^{(2+d)/(d-1)}P^{-2/(d-1)}$ for some $C=C(A,D)$ by \cref{cor:ScalingOfEigValues}. \qedhere 
\end{proof}

The following theorem shows that the `discrete spectral'  and the continuous measurement models are very close to each other.
	
	\begin{theorem}\label{thm:LeCamAsymptoticEquivalence}
		For any $r\in\RR$ and any $\nu>0$ there is a constant  $C=C(\nu,r,D,D_0,M,m)$ such that the Le~Cam distance $\Delta(\Ee_1^{(r)},\Ee_2^{(r)})$ satisfies
		\[ \Delta(\Ee_1^{(r)},\Ee_2^{(r)})\leq C\eps^{-1} \min\brackets{J,K}^{-\nu}.\]
	\end{theorem}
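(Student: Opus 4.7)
The plan is to handle the two directions of the Le Cam deficiency separately, exploiting the fact that the discrete spectral experiment $\Ee_1^{(r)}$ is essentially a low-rank projection of the continuous experiment $\Ee_2^{(r)}$.

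For the easier direction $\delta(\Ee_2^{(r)},\Ee_1^{(r)})$, I would use the deterministic evaluation map $F : Y \mapsto (\ip{Y,b_{jk}^{(r)}}_{\HH_r})_{j\le J,k\le K}$. Writing $\WW$ in the orthonormal basis $(b_{jk}^{(r)})$, the scalar projections $\ip{\WW,b_{jk}^{(r)}}_{\HH_r}$ are i.i.d.\ standard Gaussians, and $\ip{\tilde\Lambda_\gamma,b_{jk}^{(r)}}_{\HH_r}=\ip{\tilde\Lambda_\gamma\phi_j^{(r)},\phi_k^{(0)}}_{L^2(\partial D)}$ by the definition of the basis. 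Hence $P^\gamma_{\eps,r}\circ F^{-1}$ is exactly the law of the discrete-spectral data, and \cref{lem:LeCamDeficiencyControlledByImageMeasureTV} yields $\delta(\Ee_2^{(r)},\Ee_1^{(r)})=0$.

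For the harder direction $\delta(\Ee_1^{(r)},\Ee_2^{(r)})$, I would construct a Markov kernel that augments the discrete data with independent Gaussian noise on the orthogonal complement. Precisely, on an auxiliary probability space, let $\WW'$ be a Gaussian white noise indexed by $\Span\{b_{jk}^{(r)} : j>J\text{ or }k>K\}\subset\HH_r$. Given data $(Y_{jk})$ from $\Ee_1^{(r)}$, define
\[
 \tilde Y \;=\; \sum_{j\le J,\,k\le K} Y_{jk}\,b_{jk}^{(r)} \;+\; \eps \WW'.
\]
Under $P_\eps^\gamma$ in the discrete model, $\tilde Y = \pi_{JK}\tilde\Lambda_\gamma + \eps\WW''$ where $\WW''$ is a genuine Gaussian white noise on $\HH_r$; call this law $Q_\eps^\gamma$. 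The target law $P^\gamma_{\eps,r}$ is $\tilde\Lambda_\gamma + \eps\WW''$. Since these two Gaussian processes differ only by the deterministic shift $\tilde\Lambda_\gamma - \pi_{JK}\tilde\Lambda_\gamma \in \HH_r$, the Cameron--Martin computation used in \cref{lem:KLdistanceIsHHdistance} gives
\[
 K(Q_\eps^\gamma,\,P^\gamma_{\eps,r}) \;=\; \tfrac{1}{2\eps^{2}}\,\bigl\|\tilde\Lambda_\gamma-\pi_{JK}\tilde\Lambda_\gamma\bigr\|_{\HH_r}^{2}.
\]

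To close the argument, I would apply \cref{lem:ProjectionErrorDecaysAsMinJK^-nu}, which gives $\|\tilde\Lambda_\gamma-\pi_{JK}\tilde\Lambda_\gamma\|_{\HH_r} \le C(\nu,r,D,D_0,M,m)\min(J,K)^{-\nu}$ uniformly over the parameter class (using $\gamma\in\Gamma_{m,D'}$, $\|\gamma\|_\infty\le M$, and $D'$ compactly contained in $D$). Combining this with Pinsker's inequality as in \cref{lem:LeCamDistanceControlledByTV}, I obtain
\[
 \sup_\gamma \bigl\|Q_\eps^\gamma-P^\gamma_{\eps,r}\bigr\|_{\mathrm{TV}} \;\le\; \sup_\gamma \sqrt{K(Q_\eps^\gamma,P^\gamma_{\eps,r})/2} \;\le\; C'\,\eps^{-1}\,\min(J,K)^{-\nu},
\]
and hence $\delta(\Ee_1^{(r)},\Ee_2^{(r)}) \le C'\eps^{-1}\min(J,K)^{-\nu}$ by definition of the Le Cam deficiency. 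Taking the maximum with the trivial bound from the other direction yields the claimed control on $\Delta(\Ee_1^{(r)},\Ee_2^{(r)})$.

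The only subtle point, rather than a real obstacle, is the measure-theoretic construction of the Markov kernel that realises the injection of the fresh white noise $\WW'$ on the infinite-dimensional orthogonal complement: one enlarges the sample space of $\Ee_1^{(r)}$ by a product with a white-noise probability space and verifies that the resulting map $(x,\omega)\mapsto\tilde Y(x,\omega)$ defines a Markov kernel in the sense of \cref{sec:ABriefOverviewOfTheLeCamDistance}. All remaining steps are routine given the preceding lemmas.
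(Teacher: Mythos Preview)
Your proof is correct and follows essentially the same approach as the paper: both rest on the projection error estimate of \cref{lem:ProjectionErrorDecaysAsMinJK^-nu} together with the KL/Pinsker bound of \cref{lem:LeCamDistanceControlledByTV}. The paper packages the argument symmetrically via intermediate experiments $\Ee_3^{(r)},\Ee_4^{(r)}$ and a sufficient-statistic step (\cref{lem:LeCamDistanceSufficientStatistics}), whereas you treat the two deficiency directions separately---showing $\delta(\Ee_2^{(r)},\Ee_1^{(r)})=0$ by evaluation and building the noise-augmentation kernel for the reverse direction---but the substance is identical, and your augmented law $Q_\eps^\gamma$ is precisely the paper's $\Ee_3^{(r)}$.
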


	\begin{proof}
		We introduce the experiments $\Ee_i^{(r)}
		$, $i=3,4$ with parameter space $\braces{\gamma\in \Gamma_{m,D'} : \norm{\gamma}_{\infty}\leq M}$ corresponding to observations 
		\begin{align*}
		\Ee_3^{(r)}: \quad &\brackets[\big]{\pi_{JK} \tilde{\Lambda}_\gamma +\eps\WW}(U)_{U\in\HH_r}=\brackets[\big]{\ip{\pi_{JK}\tilde{\Lambda}_\gamma,U}_{\HH_r} + \eps \sum_{j,k} g_{jk} \ip{U\phi_j^{(r)},\phi_k^{(0)}}_{L^2(\partial D)} }_{U\in \HH_r}, \\
		\Ee_4^{(r)}: \quad & \brackets[\big]{\pi_{JK} \tilde{\Lambda}_\gamma +\eps\WW}(\pi_{JK} U)_{U\in \HH_r}=\brackets[\big]{\ip{\pi_{JK}\tilde{\Lambda}_\gamma,\pi_{JK}U}_{\HH_r} + \eps \sum_{\substack{j\leq J,\\k\leq K}} g_{jk} \ip{U\phi_j^{(r)},\phi_k^{(0)}}_{L^2(\partial D)} }_{U\in \HH_r},
		\end{align*}
		where we recall the projection $\pi_{JK}$ was defined in \cref{eqn:def:PiJK}, and $g_{jk}=\WW(b_{jk}^{(r)})\iidsim N(0,1)$.
		By the triangle inequality, we decompose $\Delta(\Ee_1^{(r)},\Ee_2^{(r)})\leq \Delta(\Ee_2^{(r)}, \Ee_3^{(r)})+\Delta(\Ee_3^{(r)},\Ee_4^{(r)})+\Delta(\Ee_4^{(r)},\Ee_1^{(r)})$. We control each of the terms on the right.
		
		\paragraph{$\Delta(\Ee_2^{(r)},\Ee_3^{(r)})$:}
		Lemmas~\ref{lem:LeCamDistanceControlledByTV}, \ref{lem:ProjectionErrorDecaysAsMinJK^-nu}, and the proof of \cref{lem:KLdistanceIsHHdistance} yield
		\[ \Delta(\Ee_2^{(r)},\Ee_3^{(r)})\leq  \tfrac{1}{2}\eps^{-1} \times \sup_{\gamma\in\Gamma_{m,D'} : \norm{\gamma}_{\infty}\leq M}\norm{\tilde{\Lambda}_\gamma-\pi_{JK} \tilde{\Lambda}_\gamma}_{\HH_r}\leq C\eps^{-1} \min\brackets{J,K}^{-\nu},\] for a constant $C=C(\nu,r,D,D',M,m).$ 
		\paragraph{$\Delta(\Ee_3^{(r)},\Ee_4^{(r)})$:} We note that 
		$\brackets[\big]{\pi_{JK} \tilde{\Lambda}_\gamma +\eps\WW}(\pi_{JK} U)_{U\in \HH_r}$ is a sufficient statistic for $\brackets[\big]{\pi_{JK} \tilde{\Lambda}_\gamma +\eps\WW}(U)_{U\in \HH_r}$ by independence of $(g_{jk})_{j\leq J, k\leq K}$ from $(g_{jk} : j>J \textnormal{ or } k>K)$, so that $\Delta(\Ee_3^{(r)},\Ee_4^{(r)})=0$ by \cref{lem:LeCamDistanceSufficientStatistics}.
		\paragraph{$\Delta(\Ee_4^{(r)},\Ee_1^{(r)})$:} Using \cref{lem:LeCamDeficiencyControlledByImageMeasureTV} as in the proof of \cref{thm:LeCamOneWayDeficiency}, it is clear that the experiment $\Ee_1^{(r)}$ is equivalent to observing 
		\[ \brackets[\big]{{\textstyle\sum_{j\leq J,k\leq K}} \brackets[\big]{\ip{\tilde{\Lambda}_\gamma \phi_j^{(r)},\phi_k^{(0)}}_{L^2(\partial D)}{u}_{jk} +\eps g_{j,k}{u}_{jk}}}_{u\in \ell^2},
		\] where $\ell_2$ denotes the space of square-summable real sequences. Since $(g_{jk}):=(\WW(b_{jk}^{(r)}))\overset{d}=(g_{j,k})$ (the latter being the noise in $\Ee_1^{(r)}$) 
		and, for $U=\sum_{j,k} u_{jk}b_{jk}^{(r)}$, \[{\textstyle\sum_{j\leq J, k\leq K}} \ip{\tilde{\Lambda}_\gamma \phi_j^{(r)},\phi_k^{(0)}}_{L^2(\partial D)}{u}_{jk}= \ip{\pi_{JK} \tilde{\Lambda}_\gamma,\pi_{JK} U}_{\HH_r},\] we deduce $\Delta(\Ee_4^{(r)},\Ee_1^{(r)})=0$.\qedhere
	\end{proof}
Complementing the two previous theorems, let us remark that in fact $\delta(\Ee_2^{(r)},\Ee_1^{(r)})=0$ and similarly $\delta(\Ee_2^{(0)},\Ee_0)=0$, as is shown in the proof of the lower bound in the following theorem.

\smallskip

In our final result we prove that the rates obtained in \cref{thm:ExistenceOfEstimator,thm:MinimaxLowerBound} hold with $P_\eps^\gamma$ replaced by the law $P_{\eps,P}^\gamma$ of the data $Y=(Y_{p,q})_{p,q\leq P}$ in the electrode model \cref{eqn:Model:HaarMeasurements}, for an appropriate number of electrodes $P$ and for appropriate sets $(I_p)_{p\leq P}$. The same conclusion holds as well in the `discrete spectral' model \cref{eqn:Model:LaplaceBeltramiMeasurements} -- the proof of this fact is similar (in fact simpler) and omitted. 

\begin{theorem}\label{thm:MinimaxityInAllNoiseModels} 
Let $P_{\eps,P}^\gamma$ denote the law of the data $Y=(Y_{p,q})_{p,q\leq P}$ in the electrode model \cref{eqn:Model:HaarMeasurements}. Recall the parameter set $\Gamma_{m_0,D_0}^\alpha(M)$ defined in \cref{eqn:DefinitionOfThetaMalpha}.
	\begin{enumerate}[A.]
		\item Suppose $m_0,D_0,\alpha,M$ satisfy the conditions of \cref{thm:ExistenceOfEstimator}. Suppose that the sets $(I_p)_{p\leq P}$ satisfy the conditions of \cref{thm:LeCamOneWayDeficiency} and that $P\geq \eps^{-\mu(d-1)}$ for some $\mu>1$. Then there exists a measurable function $\hat{\gamma}'$ of the data $Y\sim P_{\eps,P}^\gamma$ such that, for $C,\delta$ the same constants as in \cref{thm:ExistenceOfEstimator},
		\begin{equation}\label{eqn:UpperBound}\sup_{\gamma\in \Gamma^\alpha_{ m_0, D_0}(M)} P_{\eps,P}^\gamma(\norm{\hat{\gamma}'-\gamma}_{\infty}> C\log(1/\eps)^{-\delta})\to 0,~\text{as } \eps\to 0.\end{equation} 
		\item  Suppose $D_0,D,m_0,\alpha,M, c, \delta'$ are as in \cref{thm:MinimaxLowerBound}.	
		Let $P\in \NN$ be arbitrary, and let $(I_p)_{p\leq P}$ be arbitrary disjoint measurable sets. 
		Then for all $\eps$ small enough,
		\begin{equation}\label{eqn:LowerBound} \inf_{\tilde{\gamma}'} \sup_{\gamma \in \Gamma^\alpha_{ m_0, D_0}(M)} P_{\eps,P}^\gamma(\norm{\tilde{\gamma}'-\gamma}_{\infty}> c\log(1/\eps)^{-\delta'})>1/4,\end{equation} where the infimum extends over all measurable functions $\tilde{\gamma}'=\tilde{\gamma}'(Y)$ of the data $Y\sim P_{\eps,P}^\gamma$. 
	\end{enumerate}
\end{theorem}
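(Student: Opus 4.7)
The proof splits into the upper bound (A) and lower bound (B), and in both cases the strategy is to transport the corresponding result from the continuous model $\Ee_2^{(0)}$ to the electrode model $\Ee_0$ using the Le Cam machinery developed in \cref{sec:EquivalenceOfNoiseModels}.

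For Part A, the plan is to invoke the Le Cam risk transfer inequality \cref{eqn:RiskDeficiencyTransport} with the $[0,1]$-valued loss $L(\gamma,\rho)=\II\{\norm{\rho-\gamma}_\infty>C\xi_{\eps,\delta}\}$, taking $\rho_2=\hat\gamma$ to be the estimator produced by \cref{thm:ExistenceOfEstimator} in $\Ee_2^{(0)}$. This yields a (possibly randomised) measurable $\hat\gamma'$ on $\Ee_0$ such that
\[
P_{\eps,P}^\gamma(\norm{\hat\gamma'-\gamma}_\infty>C\xi_{\eps,\delta})\leq P_\eps^\gamma(\norm{\hat\gamma-\gamma}_\infty>C\xi_{\eps,\delta})+\delta(\Ee_0,\Ee_2^{(0)}),
\]
uniformly over $\gamma\in\Gamma_{m_0,D_0}^\alpha(M)$. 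The first term vanishes by \cref{thm:ExistenceOfEstimator}, so the task reduces to controlling $\delta(\Ee_0,\Ee_2^{(0)})\leq\delta(\Ee_0,\Ee_1^{(0)})+\delta(\Ee_1^{(0)},\Ee_2^{(0)})$ via \cref{thm:LeCamOneWayDeficiency,thm:LeCamAsymptoticEquivalence}. I would set $J=K=\lfloor\eps^{-\beta}\rfloor$ and, using the hypothesis $P^{-1/(d-1)}\leq\eps^\mu$, observe that the dominant term from \cref{thm:LeCamOneWayDeficiency} scales as $\eps^{\mu-1-3d\beta/(2d-2)}$. Choosing $\beta\in(0,(\mu-1)(2d-2)/(3d))$ (possible since $\mu>1$) drives this to zero, and then picking $\nu>1/\beta$ in \cref{thm:LeCamAsymptoticEquivalence} (which is permissible as $\nu>0$ is arbitrary) drives the remaining $\eps^{-1}J^{-\nu}$ term to zero as well. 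De-randomisation of $\hat\gamma'$, if desired, can be obtained by conditioning on auxiliary randomness in the Markov kernel.

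For Part B, the argument is cleaner and uses the exact embedding $\Ee_0\hookrightarrow\Ee_2^{(0)}$. Define the deterministic measurable map $F(Y)_{p,q}=Y(\psi_p\otimes\psi_q)$ on the continuous-model data. A short check shows that $(\psi_p\otimes\psi_q)_{p,q\leq P}$ is orthonormal in $\HH_0$, whence under $P_\eps^\gamma$, the family $\{Y(\psi_p\otimes\psi_q)\}_{p,q}$ consists of independent $N(\ip{\tilde\Lambda_\gamma\psi_p,\psi_q}_{L^2(\partial D)},\eps^2)$ variables, i.e.\ $F_*P_\eps^\gamma=P_{\eps,P}^\gamma$. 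By \cref{lem:LeCamDeficiencyControlledByImageMeasureTV} this gives $\delta(\Ee_2^{(0)},\Ee_0)=0$. Consequently, any measurable $\tilde\gamma'(Y)$ on $\Ee_0$ lifts to the estimator $\tilde\gamma(Y):=\tilde\gamma'(F(Y))$ on $\Ee_2^{(0)}$ with \emph{exactly the same} risk profile, so \cref{thm:MinimaxLowerBound} applied with $r=0$ yields \cref{eqn:LowerBound} for arbitrary $P$ and arbitrary disjoint $(I_p)_{p\leq P}$.

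The main obstacle lies entirely in Part A, where the $\eps^{-1}$ factor appearing in the Le Cam deficiency bound of \cref{thm:LeCamOneWayDeficiency} forces $P$ to be strictly polynomially larger than $\eps^{-(d-1)}$; the explicit role of the hypothesis $\mu>1$ is precisely to provide the $\eps^{\mu-1}$ slack that absorbs this factor. Once this interplay between $\mu$, $\beta$ and $\nu$ is arranged, the remainder is bookkeeping, and Part B is essentially free of analytic content.
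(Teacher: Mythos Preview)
Your proposal is correct and matches the paper's approach almost exactly: both parts use the Le~Cam risk-transfer framework with the indicator loss, Part~A via the deficiency chain $\Ee_0\to\Ee_1^{(0)}\to\Ee_2^{(0)}$ combining \cref{thm:LeCamOneWayDeficiency,thm:LeCamAsymptoticEquivalence}, and Part~B via the exact embedding $Y\mapsto(Y(\psi_p\otimes\psi_q))_{p,q}$ showing $\delta(\Ee_2^{(0)},\Ee_0)=0$.

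One small gap in Part~A: the bound in \cref{thm:LeCamOneWayDeficiency} has \emph{two} terms, namely $J^{(5d-2)/(2d-2)}P^{-1/(d-1)}$ and $\eps^{-1}J^{3d/(2d-2)}P^{-1/(d-1)}$, and you only control the second, calling it ``dominant''. A quick check shows the second term dominates precisely when $\beta<1$, which your stated range $\beta\in\bigl(0,(\mu-1)(2d-2)/(3d)\bigr)$ does not enforce for large $\mu$. The paper handles this by choosing $J=K\sim\eps^{-\mu'}$ with $\mu'<\min\bigl\{(2d-2)(\mu-1)/(3d),\,(2d-2)\mu/(5d-2)\bigr\}$, the second constraint ensuring the first term also vanishes. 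Your argument is repaired simply by adding this constraint (or requiring $\beta<1$), so this is a bookkeeping slip rather than a genuine obstacle.
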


\begin{proof} 
We consider statistical experiments denoted $\Ee_0$, $\Ee_1^{(r)}$, $\Ee_2^{(r)}$ defined as at the start of \cref{sec:lecamdef}, but here for the smaller parameter space $\Gamma^\alpha_{m_0,D_0}(M)$. The definition of the Le~Cam deficiency ensures that it cannot increase upon considering a smaller parameter space, so that \cref{thm:LeCamOneWayDeficiency,thm:LeCamAsymptoticEquivalence} continue to hold for these experiments.
\paragraph{A:}
	Consider the loss function $L(\gamma,\rho)=\II\braces{\norm{\gamma-\rho}_\infty > C\log(1/\eps)^{-\delta}}.$ The associated risk $R$ of the decision rule $\hat{\gamma}$ in the continuous model \cref{eqn:Model:HilbertSpaceMeasurements} is $P_\eps^\gamma(\norm{\hat{\gamma}-\gamma}_\infty>C\log(1/\eps)^{-\delta})$, so that $\sup_\gamma R\to 0$ as $\eps\to 0$, for any $r\in\RR$, by \cref{thm:ExistenceOfEstimator}. 	
	
	In view of \cref{eqn:RiskDeficiencyTransport}, there exists a measurable function $\hat{\gamma}'$ of the data in model \cref{eqn:Model:HaarMeasurements} whose risk $R'$ satisfies $R'\leq R+\delta(\Ee_0,\Ee_2^{(0)}).$ [In fact, as the proof shows, all Markov kernels involved in bounding $\delta(\Ee_0,\Ee_2^{(0)})$ arise from deterministic maps, and hence $\hat \gamma'$ can be taken to be non-randomised.] The limit \cref{eqn:UpperBound} (which is $\sup_\gamma R' \to 0$) will follow from showing that $\delta(\Ee_0,\Ee_2^{(0)})\to 0$. The triangle inequality gives that $\delta(\Ee_0,\Ee_2^{(0)})\leq \delta(\Ee_0,\Ee_1^{(0)})+\delta(\Ee_1^{(0)},\Ee_2^{(0)})$, where we may freely choose the parameters $J,K$ of the intermediate model.	For some positive constant $\mu'<\min\braces{(2d-2)(\mu-1)/(3d),(2d-2)\mu/(5d-2)},$ take $\nu>1/\mu'$ and choose $J=K$ of order $\eps^{-\mu'}$. Then, by \cref{thm:LeCamOneWayDeficiency,thm:LeCamAsymptoticEquivalence} and the triangle inequality, we have for a constant $C$	\[\delta(\Ee_0,\Ee_2^{(0)})\leq C(\eps^{\mu-1} J^{3d/(2d-2)} +J^{(5d-2)/(2d-2)} \eps^\mu + \eps^{-1} J^{-\nu})\to 0,\] as required. 
\paragraph{B:} 	Consider the loss function $\tilde{L}(\gamma,\rho)=\II\braces{\norm{\gamma-\rho}_\infty > c\log(1/\eps)^{-\delta'}}.$ In view of \cref{thm:MinimaxLowerBound}, the associated minimax risk in the continuous model, $\tilde{R}_{\textnormal{minimax}}=\inf_{\rho}\sup_\gamma P_\eps^\gamma (\norm{\gamma-\rho}_\infty >c\log(1/\eps)^{-\delta'})$ is greater than 1/4, at least for $\eps$ small enough. Then \cref{eqn:MinimaxRiskDeficiencyTransport} implies that the minimax risk $\tilde{R}_{\textnormal{minimax}}'$ in model \cref{eqn:Model:HaarMeasurements} satisfies $\tilde{R}_{\textnormal{minimax}}'> 1/4- \delta(\Ee_2^{(r)},\Ee_0).$ The lower bound \cref{eqn:LowerBound} will follow from showing that $\delta(\Ee_2^{(0)},\Ee_0)=0$. By \cref{lem:LeCamDeficiencyControlledByImageMeasureTV}, it suffices to show that the data in model \cref{eqn:Model:HaarMeasurements} has law matching that of some subset of the data observed in model \cref{eqn:Model:HilbertSpaceMeasurements} with $r=0$. Such a subset, recalling the concrete interpretation \cref{eqn:ContinuousModelDistributionalSense} of model \cref{eqn:Model:HilbertSpaceMeasurements}, is given by $(Y(T_{pq}))_{p,q\leq P}$, where $T_{pq}=\psi_p\otimes \psi_q = \ip{\cdot, \psi_p}_{L^2(\partial D)} \psi_q$. To see this, observe that $\ip{T_{pq}(\phi_j^{(0)}),\phi_k^{(0)}}_{L^2(\partial D)} = \ip{\phi_j^{(0)},\psi_p}_{L^2(\partial D)} \ip{\psi_q,\phi_k^{(0)}}_{L^2(\partial D)}$, hence
\begin{align*}
\ip{\tilde{\Lambda}_\gamma,T_{pq}}_{\HH_0}&=\sum_{j,k} \ip{\tilde{\Lambda}_\gamma \phi_j^{(0)},\phi_k^{(0)}}_{L^2(\partial D)}\ip{\phi_j^{(0)},\psi_p}_{L^2(\partial D)} \ip{\psi_q,\phi_k^{(0)}}_{L^2(\partial D)} 
= \ip{\tilde{\Lambda}_\gamma \psi_p,\psi_q}_{L^2(\partial D)}, \\
\WW(T_{pq})&=\sum_{j,k} g_{jk}\ip{\phi_j^{(0)},\psi_p}_{L^2(\partial D)}\ip{\psi_q,\phi_k^{(0)}}_{L^2(\partial D)}.
\end{align*}

	The noise variables $g'_{p,q}:=\ip{\WW,T_{pq}}_{\HH_0}$ are jointly normally distributed with mean zero and covariances \begin{equation*}\begin{split}\Cov(g'_{p,q},{g}_{l,m}')&= \sum_{j,k} \ip{\phi_j^{(0)},\psi_p}_{L^2(\partial D)}{\ip{\phi_j^{(0)},\psi_l}}_{L^2(\partial D)}{\ip{\psi_q,\phi_k^{(0)}}}_{L^2(\partial D)}\ip{\psi_m,\phi_k^{(0)}}_{L^2(\partial D)} \\ &=\ip{\psi_p,\psi_l}_{L^2(\partial D)}{\ip{\psi_q,\psi_m}}_{L^2(\partial D)}=\delta_{pl}\delta_{qm},\end{split}\end{equation*} so that indeed $(Y(T_{pq}))_{p,q\leq P}\overset{d}{=} (Y_{p,q})_{p,q\leq P}$ as claimed.\qedhere	
\end{proof}

	{\textbf{Acknowledgements.} The authors would like to thank Tapio Helin for helpful discussions at the outset of this project. We are further grateful to five anonymous referees and an associate editor for their valuable remarks and suggestions. We would particularly like to acknowledge one referee for pointing out the reference \cite{NS10} which allowed us to generalise our results to include $d=2$. RN was supported by ERC grant No.~647812. KA was supported by the UK EPSRC grant EP/L016516/1.
		
		\section{Notation index}\label{sec:Notation}
		\footnotesize
		
		\begin{description}
			\itemsep -0.1em 
			\item $D\subseteq \RR^d$, $d\geq 2$, a bounded domain, which is taken to mean a connected open set with smooth boundary $\partial D$.
			\item  $\Gamma_{m,D'}=\braces{\gamma\in C(D) : \inf_{x\in D} \gamma(x) \geq m,~\gamma=1 \text{ on } D\setminus D'}$, some $m\in(0,1)$ and some domain $D'$ compactly contained in $D$ (i.e.\ the closure $\bar{D}'$ is a subset of $D$). $C(D)$ denotes the real-valued bounded continuous functions from $D$ to $\RR$. $C_u(D)$ denotes the real-valued uniformly continuous functions on $D$.
			\item $\Gamma^\alpha_{m,D'}(M)=\braces{\gamma \in \Gamma_{m,D'} : ~\norm{\gamma}_{H^\alpha(D)}\leq M}$.
			\item $\gamma\in \Gamma_{m,D'}$ a conductivity function, $\gamma_0$ its `true' value for some statistical theorems. 
			\item $\theta_0=\Phi^{-1}\circ \gamma_0$ for $\Phi$ described in \cref{sec:PriorConstruction}. $\theta\in C_u(D)$ a function of the form $\Phi^{-1}\circ\gamma$ for $\gamma\in \Gamma_{m,D'}$ (usually denoting a generic draw from the prior $\Pi$ of \cref{eqn:tampering}).
			\item $m_0,D_0$ a lower bound and support set for the `true' $\gamma_0$; $m_1,D_1$ a lower bound and support set for any draw $\gamma=\Phi\circ\theta$ from the prior $\Pi$ of \cref{sec:PriorConstruction}.
			\item $\norm{\cdot}_{\infty}$ the usual supremum norm on $C(D)$ or $C(\RR)$.
			\item $\overline{v}$ the usual complex conjugate of a number or function $(\overline{v}(x):=\overline{v(x)})$.
			\item $u_{\gamma,f}$ the (weak) solution to the Dirichlet problem \cref{eqn:DirichletProblem} ($\Div(\gamma \Grad u)=0$ in $D$, $u=f$ on $\partial D$).
			\item $H^s$ an $L^2$-Sobolev space of complex-valued functions (carefully defined in \cref{sec:LaplaceBeltramiEigenfunctionsAndSobolevSpaces}); $H^1_0(D)$ the traceless subset of $H^1(D)$. Inner products here are linear in the first argument and conjugate-linear in the second. 
			\item $\Hh_s=\brackets[\big]{H^{\min\braces{1,s+3/2}}(D)\cap H^1_\loc (D)}/\CC$ ($H^1_\loc$ is defined in \cref{sec:LaplaceBeltramiEigenfunctionsAndSobolevSpaces}, and $/\CC$ means we identify functions $f,f+c$ which are equal up to a scalar $c\in\CC$).
			\item $H^s_\diamond(\partial D)= \braces{g \in H^{s}(\partial D) : \ip{g,1}_{L^2(\partial D)}=0}$, $L^2_\diamond(\partial D)=H^0_\diamond(\partial D)$.
			\item $(\phi_k^{(r)})_{k\in\NN\cup\braces{0}}$ an orthonormal basis of $H^r(\partial D)$ consisting of real-valued eigenfunctions of the Laplace--Beltrami operator on $\partial D$, with corresponding eigenvalues (sorted so they increase with $k$) $\lambda_k\geq 0$. More details in \cref{sec:LaplaceBeltramiEigenfunctionsAndSobolevSpaces}.
			\item $\psi_p=c_p\II_{I_p}$ indicator functions of some disjoint measureable subsets $(I_p)_{p\leq P}$ of $\partial D$, scaled to be orthonormal.
			\item $\pd{}{\nu}$ the outward normal derivative at the boundary of a domain (i.e.\ usually on $\partial D$) defined in a trace sense. $\tr$ denotes the usual trace map, taking $V : D\to \CC$ to its `boundary values' $\tr V=V|_{\partial D} : \partial D \to \CC$.
			\item $\Lambda_\gamma : H^{s+1}(\partial D)/\CC \to H^{s}_\diamond (\partial D)$ the Dirichlet-to-Neumann map, taking $f$ to $\gamma  \pd{u_{\gamma,f}}{\nu}|_{\partial D}.$
			\item  $\tilde{\Lambda}_\gamma = \Lambda_\gamma - \Lambda_1$.
			\item $\norm{\cdot}_*=\norm{\cdot}_{H^{1/2}(\partial D)/\CC\to H^{-1/2}(\partial D)}$, where $\norm{\cdot}_{A\to B}$ denotes the operator norm between Banach spaces $A$ and $B$.
			\item $\Ll(A,B)=\braces{T: A\to B \text{ linear s.t. }\norm{T}_{A\to B}<\infty}.$
			\item $\Ll_2(A,B)=\braces{ T\in \Ll(A,B) : \norm{T}_{\Ll_2(A,B)}^2:=\sum_k \norm{T e_k^{(A)}}_{B}^2 <\infty}$ the space of Hilbert--Schmidt operators from $A$ to $B$ for separable Hilbert spaces $A$ and $B$, with $(e_k^{(A)})_{k\in\NN}$ an orthonormal basis of $A$.
			\item $b_{jk}^{(r)}(f)\equiv (\phi_j^{(r)}) \otimes \phi_k^{(0)}(f)=\ip{f,\phi_j^{(r)}}_{H^r(\partial D)} \phi_k^{(0)}$, $j,k\in\NN$. These form an orthonormal basis of $\HH_r$ (considering real linear combinations) and of $\Ll_2(H^r(\partial D)/\CC,L^2_\diamond(\partial D))$ (considering complex linear combinations).
			\item $\HH_r =\braces[\big]{T:H^r(\partial D) \to L^2(\partial D),~ T=\sum_{j,k=1}^\infty t_{jk} b_{jk}^{(r)} : t_{jk}\in \RR, \sum_{j,k=1}^\infty t_{jk}^2<\infty }, 
			$ a (real) Hilbert space for the inner product
			$\ip{S,T}_{\HH_r}=\sum_{j,k=1}^\infty s_{jk}t_{jk}=\sum_{j,k=1}^\infty \ip{S\phi_j^{(r)},\phi_k^{(0)}}_{L^2(\partial D)} \ip{T\phi_j^{(r)},\phi_k^{(0)}}_{L^2(\partial D)} .$ We can also view $\HH_r$ as the subset (closed under vector addition and \emph{real} scalar multiplication) of $\Ll_2(H^r(\partial D)/\CC,L^2_\diamond(\partial D))$ comprising those operators which map real-valued functions to real-valued functions.
			\item $Y=\tilde{\Lambda}_\gamma +\eps \WW$ the observed data in model \cref{eqn:Model:HilbertSpaceMeasurements}, where $\WW$ is a Gaussian white noise indexed by the Hilbert space ${\HH_r}$, and $\eps$ is a noise level which tends to zero for our asymptotic results. Concrete interpretation of this model given in \cref{eqn:ContinuousModelDistributionalSense}.
			\item $P^\gamma_\eps=P^\gamma_{\eps,r}$ the law of $Y$, $E^\gamma_\eps$ the corresponding expectation operator, $\Var_{\gamma}$ the corresponding variance operator.
			\item $p^\gamma_\eps(Y)=\exp\brackets[\big]{\frac{1}{\eps^2}\ip{Y,\tilde{\Lambda}_\gamma}_{\HH_r} - \frac{1}{2\eps^2}\norm{\tilde{\Lambda}_\gamma}_{\HH_r}^2}$ the probability density of the law of $Y$ w.r.t.\ the law of $\eps\WW$.
			\item $\ell(\gamma)=\log p^\gamma_\eps$ the log-likelihood function.
			\item $\xi_{\eps,\delta}=\brackets{\log(\eps^{-1})}^{-\delta}$ for $\eps,\delta>0$.
			\item $\Pi$ a prior for $\theta\in C_u(D)$ described in \cref{sec:PriorConstruction}. $\Pi$ also denotes the induced prior on $\gamma=\Phi\circ \theta \in \Gamma_{m_1,D_1}$ and the induced prior on $\Lambda_\gamma$.
			\item $\Pi(\mathrel{\;\cdot\;} \mid Y)$ the corresponding posterior. $E^{\Pi}\sqbrackets{\mathrel{\;\cdot\;} \mid Y}$ the posterior expectation. \item $(\Hh,\norm{\cdot}_\Hh)\subseteq (H^\alpha(D),\norm{\cdot}_{H^\alpha(D)})$ the RKHS of a base prior $\Pi'$ from which $\Pi$ is constructed.
			\item $\Phi$ a `regular link function' used in the prior construction (see \cref{sec:PriorConstruction}). 
			\item $\zeta: D\to [0,1]$ a smooth cutoff function used in the prior construction.
			\item $\pi_{JK}$ the ${\HH_r}$-orthogonal projection map onto $\Span\braces{b_{jk}^{(r)} : j\leq J, k\leq K}$ (see \cref{eqn:def:PiJK}).   
			\item $K(P,Q)=K(p,q)=E_{X\sim p} \log((p/q)(X))$ for distributions $P,Q$ with densities $p,q$ (the \emph{Kullback--Leibler}, or just KL, divergence)
			\item $B_{KL}^\eps(\eta) = \braces{\theta\in C_u(D) : K(p^{\theta_0}_\eps,p^{\Phi\circ \theta}_\eps)\leq (\eta/\eps)^2, \Var_{\gamma_0}(\log(p^{\theta_0}_\eps/p^{\Phi\circ\theta}_\eps))\leq (\eta/\eps)^2}.$
			\item $N(S,\rho,\delta)$ the covering numbers of the set $S$ for metric $\rho$, i.e.\ the smallest number of $\rho$-balls of radius $\delta$ needed to cover $S$.
		\end{description}

		\hypersetup{allcolors=}
		\printbibliography 
		
		\noindent \textsc{D\'epartment de Math\'ematiques d'Orsay}\\
		\textsc{Universit\'e Paris-Sud}\\ 
		\textsc{91405 Orsay Cedex, France} \\
		\text{email: kweku.abraham@universite-paris-saclay.fr}
		
		\smallskip
		
		\noindent \textsc{Department of Pure Mathematics and Mathematical Statistics}\\
		\textsc{University of Cambridge}\\ 
		\textsc{Wilberforce Road, CB3 0WB Cambridge, UK} \\
		\text{email: r.nickl@statslab.cam.ac.uk}

	\end{document}